\documentclass[10pt,reqno]{amsart}

\usepackage{amsmath, amssymb, amsthm}
\usepackage{geometry}
\newtheorem{theorem}{Theorem}
\newtheorem{lemma}[theorem]{Lemma}

\theoremstyle{definition}
\newtheorem{definition}[theorem]{Definition}

\begin{document}
	\title[Asymptotics of Second order Linear ODEs]
	{Characterisation of Stability and Decay Rates in a Weakly Damped Second Order Linear Differential Equation}
	
	\author{John A. D. Appleby}
	\address{School of Mathematical
		Sciences, Dublin City University, Glasnevin, Dublin 9, Ireland}
	\email{john.appleby@dcu.ie} 
	
	\author{Subham Pal}
	\address{School of Mathematical
		Sciences, Dublin City University, Glasnevin, Dublin 9, Ireland}
	\email{subham.pal2@mail.dcu.ie}
	
	\subjclass{}
	\keywords{second order linear differential equation, asymptotic stability, Bessel function, Bessel differential equation, damping, forcing function}
	\date{23 March 2026}
	
	\begin{abstract}
		This paper gives necessary and sufficient conditions for the convergence of the solution of a weakly damped second order linear differential equation that is subjected to outside forcing, for which solutions of the unforced equation are asymptotically stable. Conditions are also given which characterise when the solution and its derivative tend to zero. Finally, we give sharp sufficient conditions under which the solution of the forced equation has the same asymptotic behaviour as the unforced equation, to leading order.  
	\end{abstract}
	
	\maketitle
	
	\section{Introduction}	
	We study the following second-order linear ordinary differential equation:
	\begin{equation} \label{eq:original}
		x''(t) + p(t)x'(t) + \omega^2 x(t) = f(t), \quad t \ge 0,
	\end{equation}
	subject to the initial conditions $x(0) = \xi_0$ and $x'(0) = \xi_1$.
	\begin{definition}[Hypotheses on the Coefficients]
		We impose the following strict conditions on the damping coefficient $p(t)$ and the forcing function $f(t)$:
		\begin{enumerate}
			\item $p \in C^1([0, \infty))$.
			\item Positivity and monotonicity: $p(t) > 0$ and $p'(t) < 0$ for all $t \ge 0$.
			\item Non-integrability of $p$: $p \notin L^1(0, \infty)$, meaning $\int_0^\infty p(t) \, dt = \infty$.
			\item Integrability of $p^2$: $p^2 \in L^1(0, \infty)$, meaning $\int_0^\infty p(t)^2 \, dt < \infty$.
			\item Local integrability of forcing: $f \in L^1_{\text{loc}}([0, \infty))$.
		\end{enumerate}
	\end{definition}
	As can be seen from the hypotheses on $p$, the perturbed Bessel equation 
	\[
	x''(t)+\frac{1}{t}x'(t)+x(t)=f(t),
	\]
	for $t\geq 1$ is covered by this class, where $p(t)=1/t$ and $\omega=1$. 
	
	The purpose of this paper is the following. Under the above conditions on $p$, it is known that the solution of 
	the unperturbed equation 
	\[
	u''(t)+p(t)u(t)+\omega^2 u(t)=0, \quad t\geq 0
	\]
	obeys $u(t)\to 0$ and $u'(t)\to 0$ as $t\to\infty$. Moreover, the leading order asymptotic behaviour of the solution is given by 
	\[
	u(t)=e^{-\frac{1}{2}\int_0^t p(s)\,ds} \left(c_1 \cos(\omega t) + c_2 \sin(\omega t)\right)+o\left(e^{-\frac{1}{2}\int_0^t p(s)\,ds} \right), \quad t\to\infty.
	\]
	Therefore, it is tempting to believe that, if $f$ tends to zero (or does so rapidly enough), these properties will be preserved by the solutions of $x$. In this paper, we use the auxiliary functions $y_1$ and $y_2$ defined by 
	\[
	y_1(t)=\int_0^t e^{-\omega^2 s} f(t-s)\,ds, \quad y_2(t)=\int_0^t se^{-\omega^2 s} f(t-s)\,ds
	\]
	to characterise the asymptotic behaviour of $x$ and $x'$. 
	Specifically, if 
	\[
	\rho(t)=\exp\left(-\frac{1}{2}\int_0^t p(s)\,ds\right), \quad t\geq 0,
	\]
	we prove that $x(t)\to 0$ as $t\to\infty$ if and only if 
	\[
	y_2(t)\to 0, \quad \int_0^t \frac{1}{\rho(s)} \sin(\omega s)y_2(s)\,ds = o\left(\frac{1}{\rho(t)}\right), \quad  \int_0^t \frac{1}{\rho(s)} \cos(\omega s)y_2(s)\,ds = o\left(\frac{1}{\rho(t)}\right), \quad t\to\infty,
	\]
	that $x(t)\to 0$ and $x'(t)\to 0$ as $t\to\infty$ if and only 
		\[
	y_1(t)\to 0, \quad \int_0^t \frac{1}{\rho(s)} \sin(\omega s)y_1(s)\,ds = o\left(\frac{1}{\rho(t)}\right), \quad  \int_0^t \frac{1}{\rho(s)} \cos(\omega s)y_1(s)\,ds = o\left(\frac{1}{\rho(t)}\right), \quad t\to\infty.
	\]
	Therefore, we have a characterisation of the convergence of $x$ and $x'$ to zero which can be given in terms of  elementary functions. Moreover, we can give sharp sufficient conditions on $y_2$ under which the leading order asymptotic behaviour of $u$ is preserved by $x$. In particular, we have that 
	\[
	\int_0^\infty \frac{1}{\rho(s)}|y_2(s)|\,ds <+\infty,\quad y_2(t)=o(1/\rho(t)), \quad t\to\infty,
	\]
	is sufficient to give 
	\[
		x(t)=e^{-\frac{1}{2}\int_0^t p(s)\,ds} \left(c_1 \cos(\omega t) + c_2 \sin(\omega t)\right)+o\left(e^{-\frac{1}{2}\int_0^t p(s)\,ds} \right), \quad t\to\infty.
	\]
	The sharpness of these conditions can be seen by noting that if $x$ has the above leading order behaviour, $y_2$ must necessarily behave according to  
	\[
	\lim_{t\to\infty}
	\int_0^t \frac{1}{\rho(s)}\sin(\omega s)y_2(s)\,ds \quad 
		\lim_{t\to\infty}
	\int_0^t \frac{1}{\rho(s)}\cos\omega s)y_2(s)\,ds \text{ are finite, and }
	y_2(t)=o(1/\rho(t)), \quad t\to\infty.
	\]	
The idea of characterising asymptotic behaviour in differential systems by averaged conditions of the type used here goes back to 
Strauss and Yorke \cite{SY67b,SY67a}, and Gripenberg, Londen and Staffans \cite{GLS}. Recently, Appleby and Lawless, in a sequence of papers, have shown how these results can be extended to stochastic systems and systems with memory \cite{AL:2023(AppliedMathLetters)} \cite{AL:2023(AppliedNumMath)}, \cite{AL:2024}, \cite{AL:panto24}.

	\section{Transformation to the Unperturbed Form}
	
	To eliminate the first-derivative term in \eqref{eq:original}, we apply a standard change of variables.
	
	\begin{lemma}[Elimination of Damping]
		Define the function $u(t)$ via the transformation:
		\begin{equation} \label{eq:transform}
			x(t) = \exp\left(-\frac{1}{2}\int_0^t p(s) \, ds\right) u(t).
		\end{equation}
		Then $u(t)$ satisfies the differential equation:
		\begin{equation} \label{eq:ueq}
			u''(t) + (\omega^2 + q(t))u(t) = g(t),
		\end{equation}
		where the perturbation $q(t)$ and the new forcing $g(t)$ are given by:
		\begin{align}
			q(t) &= -\frac{1}{4}p(t)^2 - \frac{1}{2}p'(t), \\
			g(t) &= f(t)\exp\left(\frac{1}{2}\int_0^t p(s) \, ds\right).
		\end{align}
	\end{lemma}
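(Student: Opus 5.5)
The plan is a direct substitution. Write $\rho(t)=\exp\left(-\frac{1}{2}\int_0^t p(s)\,ds\right)$, so that $x=\rho u$, or equivalently $u=x/\rho$. Since $p\in C^1([0,\infty))$, the function $\rho$ is $C^2$ with
\[
\rho'=-\tfrac12 p\rho, \qquad \rho''=\left(\tfrac14 p^2-\tfrac12 p'\right)\rho .
\]
Differentiating the product twice gives
\[
x'=\rho u'-\tfrac12 p\rho u, \qquad x''=\rho u''-p\rho u'+\left(\tfrac14 p^2-\tfrac12 p'\right)\rho u .
\]

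Next I substitute these into \eqref{eq:original}. The $u'$ terms are $-p\rho u'+p\rho u'=0$, so they cancel; this is the reason for the choice of the factor $\rho$. The coefficient of $\rho u$ is
\[
\tfrac14 p^2-\tfrac12 p'-\tfrac12 p^2+\omega^2=\omega^2-\tfrac14 p^2-\tfrac12 p' = \omega^2+q .
\]
The equation therefore reads $\rho\left(u''+(\omega^2+q)u\right)=f$. Dividing by $\rho>0$, that is, multiplying by $\exp\left(\frac12\int_0^t p\right)$, yields \eqref{eq:ueq} with $g=f\exp\left(\frac12\int_0^t p\right)$. The same computation run backwards shows the two equations are equivalent. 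The initial data transform as $u(0)=\xi_0$ and $u'(0)=\xi_1+\frac12 p(0)\xi_0$.

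The only point needing care is regularity. Because $f\in L^1_{\mathrm{loc}}$, solutions are understood in the Carathéodory sense: $x\in C^1$, $x'$ is absolutely continuous, and the equation holds almost everywhere. Then $u=x/\rho$ is $C^1$. Moreover $u'=(x'+\frac12 p x)/\rho$ is absolutely continuous, since $p\in C^1$ and $1/\rho\in C^2$. So the product rule computations above are valid almost everywhere, and $g\in L^1_{\mathrm{loc}}$ because $1/\rho$ is continuous. No step here is a genuine obstacle; the proof is a routine verification.
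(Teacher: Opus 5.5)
Your proposal is correct and follows essentially the same route as the paper: the product-rule computation of $x'$ and $x''$, cancellation of the $u'$ terms, and multiplication by $\exp\left(\frac12\int_0^t p(s)\,ds\right)$. Your remarks on Carath\'eodory regularity (needed since $f\in L^1_{\mathrm{loc}}$) and on the transformed initial data $u(0)=\xi_0$, $u'(0)=\xi_1+\frac12 p(0)\xi_0$ are details the paper omits, and they are consistent with its argument.
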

	
	\begin{proof}
		By the Fundamental Theorem of Calculus, differentiating $x(t)$ using the product and chain rules yields:
		\begin{align*}
			x'(t) &= e^{-\frac{1}{2}\int_0^t p(s)ds} \left( u'(t) - \frac{1}{2}p(t)u(t) \right), \\
			x''(t) &= e^{-\frac{1}{2}\int_0^t p(s)ds} \left[ u''(t) - p(t)u'(t) + \left(\frac{1}{4}p(t)^2 - \frac{1}{2}p'(t)\right)u(t) \right].
		\end{align*}
		Substitute $x(t)$, $x'(t)$, and $x''(t)$ into the original equation \eqref{eq:original}:
		\begin{align*}
			e^{-\frac{1}{2}\int_0^t p(s)ds} \Bigg[ &u''(t) - p(t)u'(t) + \left(\frac{1}{4}p(t)^2 - \frac{1}{2}p'(t)\right)u(t) \\
			&+ p(t)\left(u'(t) - \frac{1}{2}p(t)u(t)\right) + \omega^2 u(t) \Bigg] = f(t).
		\end{align*}
		Notice that the terms $-p(t)u'(t)$ and $+p(t)u'(t)$ cancel exactly. Grouping the coefficients of $u(t)$ and multiplying both sides by $\exp(\frac{1}{2}\int_0^t p(s)ds)$ directly yields equation \eqref{eq:ueq}.
	\end{proof}
	
	\section{Integrability of the Perturbation}
	
	For asymptotic perturbation theory to apply, the perturbation term $q(t)$ must vanish sufficiently fast. 
	
	\begin{theorem}[$L^1$ Integrability of $q$]
		Under the stated hypotheses, $q \in L^1(0, \infty)$.
	\end{theorem}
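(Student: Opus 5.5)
The plan is to bound the two pieces of $q(t) = -\frac{1}{4}p(t)^2 - \frac{1}{2}p'(t)$ separately, using the triangle inequality
\[
|q(t)| \le \tfrac14 p(t)^2 + \tfrac12 |p'(t)|, \qquad t \ge 0.
\]
Once both terms on the right are shown to lie in $L^1(0,\infty)$, it follows that $q \in L^1(0,\infty)$. Measurability and local integrability are not an issue, since $p \in C^1([0,\infty))$ makes $q$ continuous.

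The first term is immediate from the hypotheses, because $p^2 \in L^1(0,\infty)$ is assumed outright. So $\int_0^\infty \frac14 p(t)^2\,dt < \infty$.

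The second term is handled with the monotonicity hypothesis. Since $p'(t) < 0$, we have $|p'(t)| = -p'(t)$. By the Fundamental Theorem of Calculus, for every $T > 0$,
\[
\int_0^T |p'(t)|\,dt = -\int_0^T p'(t)\,dt = p(0) - p(T).
\]
Because $p(T) > 0$, this quantity is at most $p(0)$. The left-hand side is nondecreasing in $T$ and bounded by $p(0)$, so letting $T \to \infty$ gives $\int_0^\infty |p'(t)|\,dt \le p(0) < \infty$. In fact $p$ is decreasing and bounded below by $0$, so $p(T)$ has a limit $L \ge 0$ and the integral equals exactly $p(0) - L$. The integrability of $p^2$ then forces $L = 0$, though this is not needed here.

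Combining the two bounds gives $\int_0^\infty |q(t)|\,dt \le \frac14 \int_0^\infty p^2 + \frac12 p(0) < \infty$. I do not expect any real obstacle. The only point needing care is that $p'$ has constant sign, so its absolute integral telescopes. Note also that the non-integrability of $p$ is not used for this result; it matters only later, for the decay of $\rho$.
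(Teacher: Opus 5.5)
Your proof is correct and follows the paper's argument: the triangle inequality, $p^2 \in L^1(0,\infty)$ by hypothesis, and the telescoping identity $\int_0^T |p'(t)|\,dt = p(0) - p(T)$ obtained from the sign of $p'$. The only difference is that the paper first shows $\lim_{t\to\infty} p(t) = 0$ and evaluates the integral exactly as $p(0)$, whereas you correctly note that $p(T) > 0$ already gives the bound $p(0)$, so that step is not needed.
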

	
	\begin{proof}
		We must show that $\int_0^\infty |q(t)| \, dt < \infty$. By the triangle inequality:
		\[ \int_0^\infty |q(t)| \, dt \le \frac{1}{4}\int_0^\infty p(t)^2 \, dt + \frac{1}{2}\int_0^\infty |p'(t)| \, dt. \]
		By hypothesis 4, $p^2 \in L^1(0, \infty)$, so the first term is finite. 
		
		To evaluate the second term, we analyze $p(t)$. By hypothesis 2, $p(t) > 0$ and $p'(t) < 0$. Since $p(t)$ is strictly decreasing and bounded below by zero, the Monotone Convergence Theorem implies that $\lim_{t \to \infty} p(t) = L$ exists and $L \ge 0$. 
		
		We claim $L = 0$. Suppose $L > 0$. Then $p(t) \ge L$ for all $t$, which implies $p(t)^2 \ge L^2 > 0$. This would mean $\int_0^\infty p(t)^2 \, dt = \infty$, violating hypothesis 4. Thus, $L = 0$.
		
		Because $p'(t) < 0$, we have $|p'(t)| = -p'(t)$. Evaluating the improper integral gives:
		\[ \int_0^\infty |p'(t)| \, dt = \lim_{T \to \infty} \int_0^T -p'(t) \, dt = \lim_{T \to \infty} (p(0) - p(T)) = p(0) - 0 = p(0) < \infty. \]
		Since both components of $q(t)$ are absolutely integrable, $q \in L^1(0, \infty)$.
	\end{proof}
	
	\section{Variation of Constants and Resolvents}
	
	We now construct the solutions using the Green's function (differential resolvent).
	
	\begin{theorem}[Variation of Constants]
		Let $u_1(t; s)$ and $u_2(t; s)$ be the fundamental solutions to the homogeneous equation $u'' + (\omega^2 + q(t))u = 0$ satisfying the initial conditions at $t=s$: $u_1(s; s)=1, u_1'(s; s)=0$ and $u_2(s; s)=0, u_2'(s; s)=1$. 
		
		Define the differential resolvent for the $u$-equation as $G_u(t, s) = u_1(s)u_2(t) - u_1(t)u_2(s)$. Then the solution to \eqref{eq:ueq} with initial conditions $u(0)=u_0$ and $u'(0)=u_1$ is:
		\begin{equation} \label{eq:usol}
			u(t) = u_0 u_1(t; 0) + u_1 u_2(t; 0) + \int_0^t G_u(t, s) g(s) \, ds.
		\end{equation}
		
		Furthermore, define the resolvent for the original $x$-equation as:
		\begin{equation} \label{eq:xres}
			R(t, s) = \exp\left(-\frac{1}{2}\int_s^t p(\tau) \, d\tau\right) G_u(t, s).
		\end{equation}
		The full solution to \eqref{eq:original} is given by:
		\begin{equation} \label{eq:xsol}
			x(t) = \xi_0 \left[ p(0)R(t, 0) - \left. \frac{\partial R(t, s)}{\partial s} \right|_{s=0} \right] + \xi_1 R(t, 0) + \int_0^t R(t, s)f(s) \, ds.
		\end{equation}
	\end{theorem}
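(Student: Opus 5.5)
The plan is to prove the two displayed formulas in sequence. First establish \eqref{eq:usol} for the $u$-equation by the classical variation of constants argument. Then transport it to the $x$-equation through the transformation \eqref{eq:transform} of the Elimination of Damping lemma.

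Throughout, I read $u_1(t)$, $u_2(t)$ in the definition of $G_u$ as $u_1(t;0)$, $u_2(t;0)$. The symbols $u_0,u_1$ appearing as initial data are constants; I would rename them to avoid the clash.

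\textbf{Step 1: the Wronskian.} The equation $u''+(\omega^2+q(t))u=0$ has no first-derivative term. By Abel's identity, $W(t)=u_1(t)u_2'(t)-u_1'(t)u_2(t)$ is therefore constant, and equals $W(0)=1$. Existence and uniqueness of $u_1,u_2$ on $[0,\infty)$ follow from the continuity of $q$, which holds since $p\in C^1$.

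\textbf{Step 2: the $u$-formula.} Set $v(t)=\int_0^t G_u(t,s)g(s)\,ds$. Because $G_u(t,t)=0$, differentiation gives
\[
v'(t)=\int_0^t \partial_t G_u(t,s)g(s)\,ds .
\]
Since $\partial_tG_u(t,s)|_{s=t}=W(t)=1$, a second differentiation gives
\[
v''(t)=g(t)+\int_0^t\partial_t^2G_u(t,s)g(s)\,ds=g(t)-(\omega^2+q(t))v(t).
\]
I expect this to be the main technical point. The forcing $f$, and hence $g$, is only $L^1_{\rm loc}$, so these differentiations must be justified in the Carathéodory sense. I would write
\[
v(t)=u_2(t)\int_0^t u_1 g-u_1(t)\int_0^t u_2 g .
\]
The integrals $\int_0^t u_i g$ are absolutely continuous. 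Consequently $v'$ is absolutely continuous and the identity for $v''$ holds almost everywhere, which is the natural solution concept here. The initial values are $v(0)=v'(0)=0$. Adding the homogeneous part $u_0u_1(t;0)+u_1u_2(t;0)$ matches the initial data, and uniqueness gives \eqref{eq:usol}.

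\textbf{Step 3: transfer to $x$.} Let $\rho(t)=e^{-\frac12\int_0^tp}$. From $x=\rho u$ and $x'=\rho(u'-\tfrac12pu)$ we get the initial data
\[
u(0)=\xi_0,\qquad u'(0)=\xi_1+\tfrac12p(0)\xi_0 .
\]
Hence
\[
x(t)=\xi_0\rho(t)\bigl(u_1(t)+\tfrac12p(0)u_2(t)\bigr)+\xi_1\rho(t)u_2(t)+\int_0^t\frac{\rho(t)}{\rho(s)}G_u(t,s)f(s)\,ds,
\]
using $g(s)=f(s)/\rho(s)$.

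\textbf{Step 4: identify each term in \eqref{eq:xsol}.} The forcing integrand is exactly $R(t,s)f(s)$. Next, $R(t,0)=\rho(t)G_u(t,0)=\rho(t)u_2(t)$, which accounts for the $\xi_1$ term.

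For the $\xi_0$ term, differentiate \eqref{eq:xres} in $s$. The exponential factor contributes $\tfrac12p(s)R(t,s)$, and the resolvent contributes $\partial_sG_u(t,s)=u_1'(s)u_2(t)-u_1(t)u_2'(s)$. At $s=0$ this gives
\[
\partial_sR(t,0)=\tfrac12p(0)\rho(t)u_2(t)-\rho(t)u_1(t).
\]
Therefore
\[
p(0)R(t,0)-\partial_sR(t,0)=\rho(t)\bigl(u_1(t)+\tfrac12p(0)u_2(t)\bigr),
\]
which is exactly the coefficient of $\xi_0$ from Step 3. This completes the identification of \eqref{eq:xsol}.
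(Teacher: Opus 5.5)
Your proposal is correct and follows the same route as the paper. Both use Abel's identity for the Wronskian and variation of parameters for \eqref{eq:usol}, then the substitution $x=\rho u$, $g=f/\rho$ with $u(0)=\xi_0$, $u'(0)=\xi_1+\tfrac12 p(0)\xi_0$ to recover \eqref{eq:xsol}. Your explicit check that $p(0)R(t,0)-\partial_s R(t,0)=\rho(t)\bigl(u_1(t)+\tfrac12 p(0)u_2(t)\bigr)$, and your Carath\'eodory justification for $g\in L^1_{\text{loc}}$, fill in details the paper only asserts.
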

	
	\begin{proof}
		Because the homogeneous $u$-equation lacks a first derivative term, Abel's Identity dictates that the Wronskian $W(t) = u_1(t; s)u_2'(t; s) - u_1'(t; s)u_2(t; s)$ is constant. Evaluating at $t=s$ yields $W(s) = 1$, hence $W(t) = 1$ for all $t$. 
		
		By the standard variation of parameters formula, the particular solution is $$\int_0^t [u_1(s)u_2(t) - u_1(t)u_2(s)] g(s) \, ds.$$ Adding the homogeneous solution $u_0 u_1(t; 0) + u_1 u_2(t; 0)$ proves \eqref{eq:usol}.
		
		Mapping this back to $x(t)$ requires substituting $$u(t) = \exp\left(\frac{1}{2}\int_0^t p(\tau)d\tau\right) x(t) \quad \text{ and } \quad g(s) = f(s)\exp\left(\frac{1}{2}\int_0^s p(\tau)d\tau\right)$$ into \eqref{eq:usol}. Factoring the exponentials reveals the natural kernel $R(t, s)$ defined in \eqref{eq:xres}, which yields the convolution-like integral $\int_0^t R(t,s)f(s)ds$. The boundary terms for $\xi_0, \xi_1$ follow from matching $u(0)$ and $u'(0)$ to $x(0)$ and $x'(0)$ via the product rule applied to \eqref{eq:transform}.
	\end{proof}
	
	\section{Asymptotic Estimates and Error Bounds}
	
	We now establish rigorous bounds on $G_u(t, s) = u_1(s)u_2(t) - u_1(t)u_2(s)$, which forms the core of the resolvent $R(t, s)$. Define the tail integral of the perturbation as $Q(s) = \int_s^\infty |q(\tau)| \, d\tau$. Since $q \in L^1(0, \infty)$, $Q(s)$ is well-defined and monotonically decreases to $0$ as $s \to \infty$.
	
	\begin{theorem}[Green's Function Expansion]
		For $t \ge s \ge 0$, the resolvent $G_u(t, s)$ can be expanded as:
		\[ G_u(t, s) = \frac{\sin(\omega(t-s))}{\omega} - \frac{1}{\omega^2} \int_s^t \sin(\omega(t-\tau)) \sin(\omega(\tau-s)) q(\tau) \, d\tau + E(t, s), \]
		where the exact error term satisfies the uniform bound $|E(t, s)| \le \frac{e^{Q(s)/\omega}}{2\omega^3} Q(s)^2$.
	\end{theorem}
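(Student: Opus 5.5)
The plan is to treat $G_u(\cdot,s)$, for fixed $s$, as the solution of an initial value problem. I will rewrite that problem as a Volterra integral equation with the free oscillator kernel, and iterate it twice. The remainder is then controlled by a Gronwall bound on $G_u$.

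\emph{Identifying $G_u$.} Reading $u_1,u_2$ in the definition of $G_u$ as the fundamental solutions based at $s$, the initial conditions at $t=s$ give
\[ G_u(s,s)=0, \qquad \partial_t G_u(s,s)=u_1(s;s)u_2'(s;s)-u_1'(s;s)u_2(s;s)=1 . \]
This is the Wronskian identity from the Variation of Constants theorem. As a function of $t$, $G_u(\cdot,s)$ solves $v''+\omega^2 v=-q(t)v$. So, with $t\mapsto \sin(\omega(t-s))/\omega$ the solution of $v''+\omega^2 v=0$ with the same data, variation of constants for the free oscillator gives
\[ G_u(t,s)=\frac{\sin(\omega(t-s))}{\omega}-\frac{1}{\omega}\int_s^t \sin(\omega(t-\tau))\,q(\tau)\,G_u(\tau,s)\,d\tau. \]
This step is routine.

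\emph{A priori bound.} Since $|\sin|\le 1$, the integral equation gives $|G_u(t,s)|\le \frac1\omega+\frac1\omega\int_s^t|q(\tau)||G_u(\tau,s)|\,d\tau$. Gronwall's inequality then yields
\[ |G_u(t,s)|\le \frac{1}{\omega}\exp\Big(\frac1\omega\int_s^t|q|\Big)\le \frac{e^{Q(s)/\omega}}{\omega}. \]
This uses $q\in L^1(0,\infty)$, which is the earlier Theorem on integrability of $q$. The bound is uniform in $t\ge s$.

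\emph{Second iteration.} Substitute the integral equation for $G_u(\tau,s)$ into its own right-hand side. The leading term $\sin(\omega(\tau-s))/\omega$ produces exactly
\[ -\frac{1}{\omega^2}\int_s^t\sin(\omega(t-\tau))\sin(\omega(\tau-s))q(\tau)\,d\tau , \]
with the correct sign. What remains is
\[ E(t,s)=\frac{1}{\omega^2}\int_s^t \sin(\omega(t-\tau))q(\tau)\int_s^\tau \sin(\omega(\tau-\sigma))q(\sigma)G_u(\sigma,s)\,d\sigma\,d\tau. \]
Inserting the Gronwall bound gives
\[ |E(t,s)|\le \frac{e^{Q(s)/\omega}}{\omega^3}\int_s^t|q(\tau)|\int_s^\tau|q(\sigma)|\,d\sigma\,d\tau. \]
The double integral equals $\tfrac12\big(\int_s^t|q|\big)^2$, since it is the integral of $|q(\tau)||q(\sigma)|$ over a triangle, i.e.\ half the square. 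It is therefore at most $\tfrac12 Q(s)^2$, which is the stated bound.

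\emph{Main obstacle.} There is no real difficulty. The only point needing care is bookkeeping: making sure $G_u$ has the right initial data at $t=s$ (so that the base point of $u_1,u_2$ is $s$), and keeping the bound uniform in $t$. That uniformity comes from replacing $\int_s^t|q|$ by $Q(s)$ in both the Gronwall exponent and the triangle integral.
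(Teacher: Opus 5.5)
Your proposal is correct and follows essentially the same route as the paper: the Volterra equation with the free-oscillator kernel, the Gronwall bound $|G_u(t,s)|\le e^{Q(s)/\omega}/\omega$, and one further iteration. The paper packages your inner integral as a remainder $R_1(\tau,s)$ with $|R_1(\tau,s)|\le \frac{e^{Q(s)/\omega}}{\omega^2}\int_s^\tau|q|$, then finishes with the same triangle identity $\int_s^t|q(\tau)|\int_s^\tau|q(\sigma)|\,d\sigma\,d\tau=\frac12\bigl(\int_s^t|q|\bigr)^2\le\frac12 Q(s)^2$.
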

	
	\begin{proof}
		Since $G_u(t, s)$ solves $G_{tt} + \omega^2 G = -q(t)G$ with $G(s, s)=0$ and $G_t(s, s)=1$, it satisfies the Volterra integral equation of the second kind:
		\begin{equation} \label{eq:volterra}
			G_u(t, s) = \frac{\sin(\omega(t-s))}{\omega} - \frac{1}{\omega} \int_s^t \sin(\omega(t-\tau)) q(\tau) G_u(\tau, s) \, d\tau.
		\end{equation}
		
		\textbf{Step 1: A Priori Bound (Gronwall's Inequality)}\\
		Taking the absolute value of \eqref{eq:volterra} and noting $|\sin(\cdot)| \le 1$:
		\[ |G_u(t, s)| \le \frac{1}{\omega} + \frac{1}{\omega} \int_s^t |q(\tau)| |G_u(\tau, s)| \, d\tau. \]
		By the integral form of Gronwall's inequality, we obtain a strict uniform upper bound:
		\begin{equation} \label{eq:gronwall}
			|G_u(t, s)| \le \frac{1}{\omega} \exp\left( \frac{1}{\omega} \int_s^t |q(\tau)| \, d\tau \right) \le \frac{1}{\omega} e^{Q(s)/\omega}.
		\end{equation}
		
		\textbf{Step 2: Defining the Remainder}\\
		We split the exact Green's function into its unperturbed form plus a remainder $R_1$:
		\[ G_u(\tau, s) = \frac{\sin(\omega(\tau-s))}{\omega} + R_1(\tau, s). \]
		By definition of the Volterra equation, $R_1(\tau, s) = - \frac{1}{\omega} \int_s^\tau \sin(\omega(\tau-y)) q(y) G_u(y, s) \, dy$.
		Using our bound \eqref{eq:gronwall}:
		\begin{equation} \label{eq:R1bound}
			|R_1(\tau, s)| \le \frac{1}{\omega} \int_s^\tau |q(y)| |G_u(y, s)| \, dy \le \frac{e^{Q(s)/\omega}}{\omega^2} \int_s^\tau |q(y)| \, dy.
		\end{equation}
		
		\textbf{Step 3: Bounding the Error Term}\\
		Substituting $G_u(\tau, s)$ into the integral of \eqref{eq:volterra} gives the explicit expansion:
		\[ G_u(t, s) = \frac{\sin(\omega(t-s))}{\omega} - \frac{1}{\omega^2} \int_s^t \sin(\omega(t-\tau)) \sin(\omega(\tau-s)) q(\tau) \, d\tau + E(t, s), \]
		where $E(t, s) = - \frac{1}{\omega} \int_s^t \sin(\omega(t-\tau)) q(\tau) R_1(\tau, s) \, d\tau$.
		
		To bound this error, take the absolute value and substitute the bound \eqref{eq:R1bound}:
		\[ |E(t, s)| \le \frac{1}{\omega} \int_s^t |q(\tau)| |R_1(\tau, s)| \, d\tau \le \frac{e^{Q(s)/\omega}}{\omega^3} \int_s^t |q(\tau)| \left( \int_s^\tau |q(y)| \, dy \right) \, d\tau. \]
		By the fundamental theorem of calculus, $\frac{d}{d\tau} \left( \int_s^\tau |q(y)| \, dy \right) = |q(\tau)|$. Therefore:
		\[ \int_s^t |q(\tau)| \left( \int_s^\tau |q(y)| \, dy \right) \, d\tau = \frac{1}{2} \left( \int_s^t |q(\tau)| \, d\tau \right)^2 \le \frac{1}{2} Q(s)^2. \]
		This yields the final rigorous bound: $|E(t, s)| \le \frac{e^{Q(s)/\omega}}{2\omega^3} Q(s)^2$. 
	\end{proof}
	
	\section{Necessary Conditions for Asymptotic Decay}
	
	To determine the conditions on the forcing function $f(t)$ that guarantee the solution $x(t)$ and its derivative $x'(t)$ decay to zero, we first establish a necessary condition by introducing a filtered forcing function $y_1(t)$.
	
	\begin{lemma}[Convolution of a Decaying Function] \label{lem:convolution}
		Let $g(t) = e^{-\omega^2 t}$ for $\omega \neq 0$, and let $h(t)$ be a continuous function on $[0, \infty)$ such that $\lim_{t \to \infty} h(t) = 0$. Then the convolution $(g * h)(t) = \int_0^t e^{-\omega^2(t-s)} h(s) \, ds$ also satisfies $\lim_{t \to \infty} (g * h)(t) = 0$.
	\end{lemma}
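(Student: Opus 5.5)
The plan is a standard $\varepsilon$--$T$ splitting argument, using the fact that the kernel $e^{-\omega^2 t}$ is positive and integrable on $[0,\infty)$ with total mass $1/\omega^2$ (recall $\omega\neq 0$, so $\omega^2>0$).

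\textbf{Step 1 (choice of splitting point).} Fix $\varepsilon>0$. Since $h(t)\to 0$, there is $T\ge 0$ such that $|h(s)|\le \varepsilon$ for all $s\ge T$. Since $h$ is continuous on the compact interval $[0,T]$, it is bounded there; put $M=\max_{s\in[0,T]}|h(s)|<\infty$.

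\textbf{Step 2 (splitting the convolution).} For $t>T$ write
\[
(g*h)(t)=\int_0^T e^{-\omega^2(t-s)}h(s)\,ds+\int_T^t e^{-\omega^2(t-s)}h(s)\,ds=:I_1(t)+I_2(t).
\]

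\textbf{Step 3 (the early part).} For $I_1$ we pull out the decaying factor:
\[
|I_1(t)|\le M e^{-\omega^2 t}\int_0^T e^{\omega^2 s}\,ds\le M T e^{\omega^2 T}e^{-\omega^2 t}.
\]
This tends to $0$ as $t\to\infty$, because $T$ and $M$ are fixed.

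\textbf{Step 4 (the tail part).} For $I_2$ we use the smallness of $h$ together with the finite mass of the kernel:
\[
|I_2(t)|\le \varepsilon\int_T^t e^{-\omega^2(t-s)}\,ds\le \varepsilon\int_0^\infty e^{-\omega^2 u}\,du=\frac{\varepsilon}{\omega^2}.
\]

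\textbf{Step 5 (conclusion).} Combining Steps 3 and 4 gives
\[
\limsup_{t\to\infty}|(g*h)(t)|\le \frac{\varepsilon}{\omega^2}.
\]
Since $\varepsilon>0$ was arbitrary, $(g*h)(t)\to 0$ as $t\to\infty$.

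There is no real obstacle here. The only points needing care are these. First, the bound on $I_2$ must be uniform in $t$, which is exactly what integrability of the kernel provides. Second, the hypothesis $\omega\neq0$ is essential: if $\omega=0$, the kernel would be identically $1$ and not integrable, and the conclusion can fail. The same argument works verbatim for the kernel $s e^{-\omega^2 s}$ appearing in $y_2$, since that kernel is also integrable.
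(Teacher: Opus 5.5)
Your proposal is correct and follows the same $\varepsilon$--$T$ splitting argument as the paper: the early piece is bounded by a constant times $e^{-\omega^2 t}$, and the tail piece by $\varepsilon/\omega^2$. The only differences are cosmetic. You bound $|h|$ by its maximum on $[0,T]$ rather than by a global bound, and you estimate $\int_0^T e^{\omega^2 s}\,ds$ crudely by $Te^{\omega^2 T}$ rather than computing it exactly.
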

	
	\begin{proof}
		Since $h(t)$ is continuous and tends to $0$, it is bounded on $[0, \infty)$; let $|h(t)| \le M$ for all $t$. Let $\epsilon > 0$. Since $h(t) \to 0$, there exists $T > 0$ such that $|h(t)| < \epsilon$ for all $t \ge T$. For $t > T$, we split the convolution integral:
		\[ \left| \int_0^t e^{-\omega^2(t-s)} h(s) \, ds \right| \le \int_0^T e^{-\omega^2(t-s)} |h(s)| \, ds + \int_T^t e^{-\omega^2(t-s)} |h(s)| \, ds. \]
		For the first integral, bound $|h(s)|$ by $M$:
		\[ \int_0^T e^{-\omega^2(t-s)} M \, ds = M e^{-\omega^2 t} \left[ \frac{e^{\omega^2 s}}{\omega^2} \right]_0^T = \frac{M}{\omega^2} e^{-\omega^2 t} (e^{\omega^2 T} - 1). \]
		For the second integral, bound $|h(s)|$ by $\epsilon$:
		\[ \int_T^t e^{-\omega^2(t-s)} \epsilon \, ds = \epsilon e^{-\omega^2 t} \left[ \frac{e^{\omega^2 s}}{\omega^2} \right]_T^t = \frac{\epsilon}{\omega^2} (1 - e^{-\omega^2(t-T)}) \le \frac{\epsilon}{\omega^2}. \]
		As $t \to \infty$, the first term tends to $0$ because $T$ is fixed. Thus, the limit superior of the convolution is bounded by $\epsilon/\omega^2$. Since $\epsilon$ is arbitrary, the limit is $0$.
	\end{proof}
	
	\begin{theorem}[Necessary Condition for Joint Decay]
		Assume the system is initially at rest, so $x(0) = 0$ and $x'(0) = 0$. Define $y_1(t)$ as the solution to the first-order initial value problem:
		\[ y_1'(t) = -\omega^2 y_1(t) + f(t), \quad y_1(0) = 0. \]
		If the solution to the original equation satisfies $x(t) \to 0$ and $x'(t) \to 0$ as $t \to \infty$, then it necessarily follows that $y_1(t) \to 0$ as $t \to \infty$.
	\end{theorem}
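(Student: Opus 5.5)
Since $y_1$ solves a first-order linear equation driven by $f$, we have the explicit representation $y_1(t)=\int_0^t e^{-\omega^2(t-s)}f(s)\,ds$. The plan is to replace $f$ in this formula by the left-hand side of \eqref{eq:original}, namely $f=x''+p x'+\omega^2 x$. After integrating by parts, $y_1$ becomes a combination of terms built from $x$ and $x'$, and each of these will tend to zero by Lemma~\ref{lem:convolution}.

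First I would compute the integrals term by term. Integrating by parts and using $x'(0)=0$ gives
\[ \int_0^t e^{-\omega^2(t-s)}x''(s)\,ds = x'(t) - \omega^2\int_0^t e^{-\omega^2(t-s)}x'(s)\,ds . \]
The term containing $\omega^2 x$ is simply $\omega^2\int_0^t e^{-\omega^2(t-s)}x(s)\,ds$. Adding everything up yields the identity
\[ y_1(t) = x'(t) + \int_0^t e^{-\omega^2(t-s)}\bigl[(p(s)-\omega^2)x'(s) + \omega^2 x(s)\bigr]\,ds . \]
Alternatively, one can check directly that $z:=y_1-x'$ satisfies $z'=-\omega^2 z + (p-\omega^2)x' + \omega^2 x$ with $z(0)=0$, which avoids integrating by parts against a merely locally integrable $x''$.

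Next I would apply Lemma~\ref{lem:convolution} with $h(s)=(p(s)-\omega^2)x'(s)+\omega^2 x(s)$. This $h$ is continuous because $p\in C^1$ and $x\in C^1$. It also satisfies $h(t)\to 0$: by hypothesis $x(t)\to 0$ and $x'(t)\to 0$, and $p$ is bounded, since $0<p(t)\le p(0)$ by the monotonicity hypothesis (indeed $p(t)\to 0$, as shown in the proof of the $L^1$ integrability theorem). The lemma then gives that the convolution term tends to $0$. Since $x'(t)\to 0$ as well, we conclude $y_1(t)\to 0$.

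The main technical point is regularity. Because $f$ is only locally integrable, $x$ is $C^1$ with $x'$ absolutely continuous, and the integration by parts (or the ODE for $z$) must be justified in the Carathéodory sense. I would handle this by working with the first-order formulation for $z$, where only absolute continuity of $x'$ is needed and the variation-of-constants formula applies directly. Beyond this, the argument is routine.
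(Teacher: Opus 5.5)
Your proof is correct and is essentially the paper's argument: write $y_1(t)=\int_0^t e^{-\omega^2(t-s)}f(s)\,ds$, substitute $f=x''+px'+\omega^2x$, integrate the $x''$ term by parts using $x'(0)=0$, and conclude with Lemma~\ref{lem:convolution} using the boundedness of $p$. The only differences are cosmetic: you combine the convolution terms into a single continuous $h=(p-\omega^2)x'+\omega^2x$, so the lemma is applied once rather than three times, and your observation that $z=y_1-x'$ solves $z'=-\omega^2z+(p-\omega^2)x'+\omega^2x$ with $z(0)=0$ neatly avoids handling $x''$ when $f$ is only in $L^1_{\mathrm{loc}}$.
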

	
	\begin{proof}
		Solving the linear first-order equation for $y_1(t)$ using the integrating factor $e^{\omega^2 t}$ and the initial condition $y_1(0) = 0$ yields:
		\[ y_1(t) = \int_0^t e^{-\omega^2(t-s)} f(s) \, ds. \]
		Let $g_1(t) = e^{-\omega^2 t}$. We can express $y_1$ compactly as the convolution $y_1 = g_1 * f$.
		
		We apply the convolution operator with $g_1$ to both sides of the original differential equation $x''(t) + p(t)x'(t) + \omega^2 x(t) = f(t)$. By the linearity of the convolution, this distributes as:
		\begin{equation} \label{eq:convoluted_ODE}
			(g_1 * x'')(t) + (g_1 * (px'))(t) + \omega^2(g_1 * x)(t) = (g_1 * f)(t).
		\end{equation}
		The right-hand side is exactly $y_1(t)$.
		
		We now isolate and simplify the $(g_1 * x'')$ term using integration by parts. Let $u = e^{-\omega^2(t-s)}$ and $dv = x''(s) \, ds$. Then $du = \omega^2 e^{-\omega^2(t-s)} \, ds$ and $v = x'(s)$. Applying this to the integral gives:
		\begin{align*}
			(g_1 * x'')(t) &= \int_0^t e^{-\omega^2(t-s)} x''(s) \, ds \\
			&= \left[ e^{-\omega^2(t-s)} x'(s) \right]_{s=0}^{s=t} - \int_0^t \omega^2 e^{-\omega^2(t-s)} x'(s) \, ds \\
			&= x'(t) - e^{-\omega^2 t} x'(0) - \omega^2 (g_1 * x')(t).
		\end{align*}
		Since $x'(0) = 0$ by hypothesis, the boundary term evaluated at $s=0$ vanishes, leaving:
		\[ (g_1 * x'')(t) = x'(t) - \omega^2 (g_1 * x')(t). \]
		
		Substituting this expression back into \eqref{eq:convoluted_ODE} yields:
		\[ x'(t) - \omega^2 (g_1 * x')(t) + (g_1 * (px'))(t) + \omega^2(g_1 * x)(t) = y_1(t). \]
		
		We now analyze the limit as $t \to \infty$ for each term on the left-hand side:
		\begin{enumerate}
			\item By hypothesis, $x'(t) \to 0$.
			\item Since $x'(t) \to 0$, Lemma \ref{lem:convolution} dictates that the convolution $(g_1 * x')(t) \to 0$.
			\item The coefficient $p(t)$ monotonically decreases to $0$ (established in Section 3), meaning it is uniformly bounded. Since $x'(t) \to 0$, the product $p(t)x'(t) \to 0$. Applying Lemma \ref{lem:convolution} again guarantees $(g_1 * (px'))(t) \to 0$.
			\item By hypothesis, $x(t) \to 0$. Applying Lemma \ref{lem:convolution} a third time guarantees $(g_1 * x)(t) \to 0$.
		\end{enumerate}
		
		Because every term on the left-hand side independently tends to $0$ as $t \to \infty$, their algebraic sum must also tend to $0$. Since the left-hand side equals $y_1(t)$, it rigorously follows that $y_1(t) \to 0$ as $t \to \infty$.
	\end{proof}
	
	\section{Sufficient Conditions for Asymptotic Decay}
	
	We now reverse the direction of implication. Assuming that the initial conditions are identically zero ($x(0)=0$, $x'(0)=0$), the variation of constants formula reduces purely to the forced integral:
	\[ x(t) = \int_0^t R(t, s)f(s) \, ds. \]
	To understand how the decay of $y_1(t)$ forces the decay of $x(t)$ and $x'(t)$, we rewrite $x$ and $x'$ explicitly in terms of $y_1$.
	
	\begin{theorem}[Representation of $x$ and $x'$ in terms of $y_1$]
		Let $y_1(t)$ be defined as $y_1'(t) = -\omega^2 y_1(t) + f(t)$ with $y_1(0) = 0$. Then $x(t)$ and $x'(t)$ can be represented as:
		\begin{align}
			x(t) &= \int_0^t \left( \omega^2 R(t, s) - \frac{\partial R(t, s)}{\partial s} \right) y_1(s) \, ds \label{eq:x_y1} \\
			x'(t) &= y_1(t) + \int_0^t \left( \omega^2 \frac{\partial R(t, s)}{\partial t} - \frac{\partial^2 R(t, s)}{\partial t \partial s} \right) y_1(s) \, ds \label{eq:xp_y1}
		\end{align}
	\end{theorem}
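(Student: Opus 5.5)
Starting from the zero-data representation $x(t)=\int_0^t R(t,s)f(s)\,ds$, I substitute $f(s)=y_1'(s)+\omega^2 y_1(s)$, which is just the defining equation of $y_1$. This splits $x(t)$ into
\[
\omega^2\int_0^t R(t,s)y_1(s)\,ds+\int_0^t R(t,s)y_1'(s)\,ds .
\]
I then integrate the second integral by parts in $s$, giving
\[
\int_0^t R(t,s)y_1'(s)\,ds=\bigl[R(t,s)y_1(s)\bigr]_{s=0}^{s=t}-\int_0^t \frac{\partial R(t,s)}{\partial s}\,y_1(s)\,ds .
\]
Both boundary terms vanish. At $s=t$ we have $R(t,t)=G_u(t,t)=0$, since $G_u(t,s)=u_1(s)u_2(t)-u_1(t)u_2(s)$ is antisymmetric. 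At $s=0$ we have $y_1(0)=0$. This gives \eqref{eq:x_y1} directly.

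For \eqref{eq:xp_y1} I differentiate \eqref{eq:x_y1} in $t$ using the Leibniz rule. Writing $K(t,s)=\omega^2R(t,s)-\partial_sR(t,s)$, this gives
\[
x'(t)=K(t,t)y_1(t)+\int_0^t\partial_tK(t,s)\,y_1(s)\,ds .
\]
The integrand is exactly the kernel claimed in \eqref{eq:xp_y1}, provided $\partial_t\partial_s R=\partial_s\partial_t R$. So it remains to show $K(t,t)=1$. Since $R(t,t)=0$, this reduces to $\partial_sR(t,s)|_{s=t}=-1$.

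To compute this, I differentiate $R(t,s)=\exp\bigl(-\tfrac12\int_s^t p\bigr)G_u(t,s)$ in $s$. The term coming from the exponential carries the factor $G_u(t,t)=0$, so
\[
\partial_sR(t,s)\big|_{s=t}=\partial_sG_u(t,s)\big|_{s=t}=u_1'(t)u_2(t)-u_1(t)u_2'(t).
\]
This is $-W=-1$ by the Abel identity used in the Variation of Constants theorem. Here $u_1,u_2$ should be read as a fixed fundamental pair with unit Wronskian, such as $u_i(\cdot;0)$, which is the form in which $G_u$ is defined. Hence $K(t,t)=1$, producing the lone $y_1(t)$ term.

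The main obstacle is regularity, since $f$ is only $L^1_{\mathrm{loc}}$. I therefore need the following facts.
\begin{itemize}
\item $y_1$ is absolutely continuous, which justifies the integration by parts.
\item $R$ is $C^2$ in each variable, with continuous mixed partials, so that Leibniz and Schwarz apply.
\end{itemize}
The second point holds because $p\in C^1$ makes $q$ continuous, so $u_1,u_2\in C^2$ and $G_u$ is a sum of products of $C^2$ functions of $t$ and of $s$. The exponential factor is $C^2$ in both variables because $p\in C^1$. Hence $\partial_t\partial_sR$ exists, is continuous, and equals $\partial_s\partial_tR$.
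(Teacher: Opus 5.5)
Your proof is correct, and your derivation of \eqref{eq:x_y1} is the same as the paper's. For \eqref{eq:xp_y1} you take a somewhat different route.

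The paper goes back to $x(t)=\int_0^t R(t,s)f(s)\,ds$ and differentiates it in $t$; the term $R(t,t)f(t)$ vanishes. It then substitutes $f=y_1'+\omega^2y_1$ a second time and integrates by parts again. Its boundary term is therefore $\partial_tR(t,s)|_{s=t}\,y_1(t)$, which it evaluates as $1$ from the normalisation $\partial_tG_u(t,s)|_{s=t}=1$.

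You instead differentiate the already established formula \eqref{eq:x_y1}. Your boundary term is $K(t,t)y_1(t)$, so you need $\partial_sR(t,s)|_{s=t}=-1$, which you get from the unit Wronskian. The paper uses exactly this fact later, in the representation of $x$ in terms of $y_2$, to obtain $K_1(t,t)=1$. The two boundary evaluations are really one fact, since antisymmetry of $G_u$ gives $\partial_sG_u(t,s)|_{s=t}=-\partial_tG_u(t,s)|_{s=t}$.

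What your route buys is that the Leibniz rule is only applied to the continuous integrand $K(t,s)y_1(s)$. You never differentiate under an integral against the merely locally integrable $f$, a step the paper carries out somewhat formally. The only bookkeeping point is the order of mixed differentiation: your kernel arrives as $\partial_t(\partial_sR)$, while the paper's arrives as $\partial_s(\partial_tR)$. Identifying either with the statement's $\partial^2R/\partial t\partial s$ uses continuity of the mixed partials of $R$, which you verify correctly from $p\in C^1$ and $u_1,u_2\in C^2$.
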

	
	\begin{proof}
		From the definition of $y_1(t)$, we can solve for $f(t)$:
		\[ f(t) = y_1'(t) + \omega^2 y_1(t). \]
		Substitute this expression for $f(s)$ into the integral representation for $x(t)$:
		\[ x(t) = \int_0^t R(t, s) \left[ y_1'(s) + \omega^2 y_1(s) \right] \, ds. \]
		We split this into two integrals and apply integration by parts to the term containing $y_1'(s)$. Let $u = R(t, s)$ and $dv = y_1'(s) \, ds$. Then $du = \frac{\partial R(t, s)}{\partial s} \, ds$ and $v = y_1(s)$.
		\[ \int_0^t R(t, s) y_1'(s) \, ds = \Big[ R(t, s) y_1(s) \Big]_{s=0}^{s=t} - \int_0^t \frac{\partial R(t, s)}{\partial s} y_1(s) \, ds. \]
		To evaluate the boundary terms, we note that $y_1(0) = 0$ by definition. At $s = t$, we use the definition of the resolvent $R(t, t) = G_u(t, t) = 0$. Thus, the boundary term vanishes entirely. Substituting the remaining integral back yields \eqref{eq:x_y1}:
		\[ x(t) = \int_0^t \left( \omega^2 R(t, s) - \frac{\partial R(t, s)}{\partial s} \right) y_1(s) \, ds. \]
		
		To derive the representation for $x'(t)$, we first differentiate the integral formula $x(t) = \int_0^t R(t, s)f(s) \, ds$ using Leibniz's rule:
		\[ x'(t) = R(t, t)f(t) + \int_0^t \frac{\partial R(t, s)}{\partial t} f(s) \, ds. \]
		Since $R(t, t) = 0$, this simplifies to:
		\[ x'(t) = \int_0^t \frac{\partial R(t, s)}{\partial t} \left[ y_1'(s) + \omega^2 y_1(s) \right] \, ds. \]
		Again, we integrate the $y_1'$ term by parts. Let $u = \frac{\partial R(t, s)}{\partial t}$ and $dv = y_1'(s) \, ds$. Then $du = \frac{\partial^2 R(t, s)}{\partial t \partial s} \, ds$ and $v = y_1(s)$:
		\[ \int_0^t \frac{\partial R(t, s)}{\partial t} y_1'(s) \, ds = \left[ \frac{\partial R(t, s)}{\partial t} y_1(s) \right]_{s=0}^{s=t} - \int_0^t \frac{\partial^2 R(t, s)}{\partial t \partial s} y_1(s) \, ds. \]
		At $s=0$, the term is zero because $y_1(0) = 0$. At $s=t$, we must evaluate $\left. \frac{\partial R(t, s)}{\partial t} \right|_{s=t}$. Recalling $R(t, s) = \exp\left(-\frac{1}{2}\int_s^t p(\tau) \, d\tau\right) G_u(t, s)$:
		\[ \frac{\partial R(t, s)}{\partial t} = -\frac{1}{2}p(t) R(t, s) + \exp\left(-\frac{1}{2}\int_s^t p(\tau) \, d\tau\right) \frac{\partial G_u(t, s)}{\partial t}. \]
		Evaluating at $s=t$, the first term is $0$ because $R(t, t) = 0$. The exponential is $e^0 = 1$. By the initial conditions defining the Green's function, $\left. \frac{\partial G_u}{\partial t} \right|_{s=t} = 1$. Therefore, $\left. \frac{\partial R(t, s)}{\partial t} \right|_{s=t} = 1$.
		The boundary term evaluates exactly to $1 \cdot y_1(t) = y_1(t)$. Substituting this back completes the derivation of \eqref{eq:xp_y1}:
		\[ x'(t) = y_1(t) + \int_0^t \left( \omega^2 \frac{\partial R(t, s)}{\partial t} - \frac{\partial^2 R(t, s)}{\partial t \partial s} \right) y_1(s) \, ds. \]
	\end{proof}
	
		\section{Sharp Asymptotic Kernel Estimates and Leading-Order Decay}
	
	To conclude that $x(t) \to 0$ and $x'(t) \to 0$ whenever $y_1(t) \to 0$, we must analyze the integral operators in equations \eqref{eq:x_y1} and \eqref{eq:xp_y1}. We define the two integral kernels:
	\begin{align}
		K_1(t, s) &= \omega^2 R(t, s) - \frac{\partial R(t, s)}{\partial s}, \\
		K_2(t, s) &= \omega^2 \frac{\partial R(t, s)}{\partial t} - \frac{\partial^2 R(t, s)}{\partial t \partial s}.
	\end{align}
	It is useful to establish uniform boundedness of $K_1$ and $K_2$, which we now do. 
		\begin{lemma}[Uniform Boundedness of the Resolvent Derivatives] \label{lem:kernel_bounds}
		Let $E(t, s) = \exp\left(-\frac{1}{2}\int_s^t p(\tau) \, d\tau\right)$. Under our established hypotheses, there exists a constant $M > 0$ such that for all $t \ge s \ge 0$:
		\[ |K_1(t, s)| \le M E(t, s) \quad \text{and} \quad |K_2(t, s)| \le M E(t, s). \]
	\end{lemma}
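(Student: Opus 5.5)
Write $E(t,s)=\exp(-\frac12\int_s^t p)$ and $G=G_u$. The plan is to compute the partial derivatives of $R(t,s)=E(t,s)G(t,s)$ explicitly. We have $\partial_s E = \frac12 p(s)E$ and $\partial_t E=-\frac12 p(t)E$. This gives
\begin{align*}
\partial_s R &= E\left(\tfrac12 p(s) G + G_s\right),\\
\partial_t R &= E\left(-\tfrac12 p(t) G + G_t\right),\\
\partial_t\partial_s R &= E\left(-\tfrac14 p(t)p(s) G -\tfrac12 p(t) G_s + \tfrac12 p(s) G_t + G_{ts}\right).
\end{align*}
Consequently $K_1=E\,(\omega^2 G-\frac12 p(s)G-G_s)$ and $K_2=E\,(\omega^2(-\frac12p(t)G+G_t)+\frac14p(t)p(s)G+\frac12p(t)G_s-\frac12p(s)G_t-G_{ts})$. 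Since $0<p\le p(0)$ by hypothesis 2, it suffices to show that $G,G_t,G_s,G_{ts}$ are bounded uniformly on $\{t\ge s\ge 0\}$. The constant $M$ is then an explicit combination of these bounds with $\omega^2$ and $p(0)$.

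For $G$ this bound is already \eqref{eq:gronwall}: $|G(t,s)|\le \omega^{-1}e^{Q(0)/\omega}$. Write $G(t,s)=u_1(s)u_2(t)-u_1(t)u_2(s)$, where $u_1,u_2$ are fundamental solutions based at $0$. Then
\[
G_t=u_1(s)u_2'(t)-u_1'(t)u_2(s),\qquad G_s=u_1'(s)u_2(t)-u_1(t)u_2'(s),\qquad G_{ts}=u_1'(s)u_2'(t)-u_1'(t)u_2'(s).
\]
So everything reduces to the uniform boundedness of $u_i$ and $u_i'$ on $[0,\infty)$.

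This is the main step. I would use the standard energy/Gronwall argument for $v''+(\omega^2+q)v=0$ with $q\in L^1$ (Theorem on $L^1$ integrability of $q$). Set $V(t)=v'(t)^2+\omega^2 v(t)^2$. Then $V'=-2qvv'$, and since $|vv'|\le V/(2\omega)$ we get $V'\le |q|V/\omega$. Hence $V(t)\le V(0)e^{\|q\|_1/\omega}$, so $|v|\le \omega^{-1}\sqrt{V(0)}e^{\|q\|_1/(2\omega)}$ and $|v'|\le \sqrt{V(0)}e^{\|q\|_1/(2\omega)}$. Applying this to $u_1,u_2$ with $V(0)=\omega^2$ and $1$ respectively gives the required constants.

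Alternatively, one could avoid the fixed base point by differentiating the Volterra equation \eqref{eq:volterra} in $t$ and in $s$ and applying Gronwall, as in Step 1 of the Green's function expansion. That route is messier for $G_{ts}$, however, so the Wronskian/product representation is preferable.

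One point needs care: $q$ contains $p'$, which is only continuous. This causes no problem, because $q$ is continuous and integrable, so all derivatives above exist classically and Leibniz/chain rule computations are justified. Collecting the bounds yields $|K_i(t,s)|\le M E(t,s)$.
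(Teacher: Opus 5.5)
Your proof is correct and has the same structure as the paper's: differentiate $R=E\,G_u$, use $0<p\le p(0)$, and reduce everything to uniform bounds on $G_u$ and its first and mixed partial derivatives. Where the paper only appeals to ``standard asymptotic theory'' for the boundedness of $\partial_t G_u$, $\partial_s G_u$ and $\partial_t\partial_s G_u$, you prove it, via the factorisation $G_u(t,s)=u_1(s)u_2(t)-u_1(t)u_2(s)$ (fundamental solutions based at $0$) together with the energy estimate $V'\le |q|V/\omega$ for $V=v'^2+\omega^2v^2$; this gives a complete justification of that step.
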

	
	\begin{proof}
		Recall that $R(t, s) = E(t, s) G_u(t, s)$. By Section 5, $G_u(t, s)$ is bounded because the exact error $E(t, s)$ is bounded by an expression depending on the finite tail integral $Q(s) \le Q(0)$. Thus, $|G_u(t, s)| \le C_0$ for some constant $C_0$.
		
		Because $G_u(t, s)$ solves a perturbed harmonic oscillator equation with $q \in L^1$, standard asymptotic theory for linear ODEs dictates that its partial derivatives $\frac{\partial G_u}{\partial t}$, $\frac{\partial G_u}{\partial s}$, and the mixed derivative $\frac{\partial^2 G_u}{\partial t \partial s}$ are also uniformly bounded by some constant $C_1$.
		
		Differentiating $R(t, s)$ yields:
		\begin{align*}
			\frac{\partial R}{\partial s} &= E(t, s) \left( \frac{1}{2}p(s)G_u(t, s) + \frac{\partial G_u(t, s)}{\partial s} \right), \\
			\frac{\partial R}{\partial t} &= E(t, s) \left( -\frac{1}{2}p(t)G_u(t, s) + \frac{\partial G_u(t, s)}{\partial t} \right).
		\end{align*}
		Because $p(t)$ monotonically decreases to $0$ (proven in Section 3), it is uniformly bounded by $p(0)$. Consequently, both $\frac{\partial R}{\partial s}$ and $\frac{\partial R}{\partial t}$ are bounded by a constant multiple of $E(t, s)$. 
		
		Computing the mixed derivative $\frac{\partial^2 R}{\partial t \partial s}$ introduces terms involving $p(t)p(s)$ and the derivatives of $G_u$. Since $p$ and all derivatives of $G_u$ are globally bounded, we can collect all bounding constants into a single overarching constant $M > 0$ such that $|K_1(t, s)| \le M E(t, s)$ and $|K_2(t, s)| \le M E(t, s)$.
	\end{proof}
 
Our next goal is to derive sharp, leading-order sinusoidal approximations for the kernels $K_1(t, s)$ and $K_2(t, s)$, and rigorously prove that convolution with the exact kernels preserves the zero limit if and only if convolution with the leading-order approximations does.
	
	\begin{lemma}[Uniform $L^1$ Bounds on Error Weights] \label{lem:uniform_bounds}
		Despite $p \notin L^1(0, \infty)$, the following integrals are uniformly bounded for all $t \ge 0$:
		\begin{enumerate}
			\item $\int_0^t E(t, s) p(s) \, ds \le 2$.
			\item $p(t) \int_0^t E(t, s) \, ds \le 2$.
			\item $\int_0^t E(t, s) |q(s)| \, ds \le p(0)$.
		\end{enumerate}
	\end{lemma}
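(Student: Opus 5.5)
The three bounds all rest on one identity. Since $E(t,s)=\exp\bigl(-\frac12\int_s^t p(\tau)\,d\tau\bigr)$, the fundamental theorem of calculus gives
\[
\frac{\partial E(t,s)}{\partial s}=\tfrac12 p(s)E(t,s),\qquad E(t,t)=1,\qquad 0<E(t,s)\le 1 \text{ for } 0\le s\le t .
\]
So $p(s)E(t,s)$ is an exact derivative in $s$. This is the replacement for the integrability of $p$, which fails by hypothesis 3.

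For part (1), I integrate this derivative directly:
\[
\int_0^t E(t,s)p(s)\,ds = 2\bigl[E(t,s)\bigr]_{s=0}^{s=t}=2\bigl(1-E(t,0)\bigr)\le 2 .
\]
For part (2), I use the monotonicity in hypothesis 2. For $0\le s\le t$ we have $p(t)\le p(s)$, so
\[
p(t)\int_0^t E(t,s)\,ds\le\int_0^t p(s)E(t,s)\,ds\le 2
\]
by part (1).

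For part (3), I start from the definition of $q$ and the fact that $p'<0$, which give $|q(s)|\le \frac14 p(s)^2+\frac12|p'(s)| = \frac14 p(s)^2-\frac12 p'(s)$. I handle the two pieces separately.

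The quadratic piece is the only delicate point. Bounding $E\le 1$ and using $p^2\in L^1$ would give the constant $\frac14\int_0^\infty p^2$, not the claimed $p(0)$. Instead I use $p(s)\le p(0)$ to write $p(s)^2E(t,s)\le p(0)\,p(s)E(t,s)$, and then part (1) gives
\[
\tfrac14\int_0^t p(s)^2E(t,s)\,ds\le \tfrac{p(0)}{4}\cdot 2=\tfrac{p(0)}{2}.
\]
For the derivative piece, $E\le 1$ and the telescoping from Section 3 give
\[
-\tfrac12\int_0^t p'(s)E(t,s)\,ds\le \tfrac12\bigl(p(0)-p(t)\bigr)\le \tfrac{p(0)}{2}.
\]
Adding the two pieces gives $\int_0^t E(t,s)|q(s)|\,ds\le p(0)$, uniformly in $t\ge 0$.
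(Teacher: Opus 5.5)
Your proof is correct. Part (1) is exactly the paper's argument, but in (2) and (3) you take a different and more economical route: you reduce both to part (1) using only monotonicity. You use $p(t)\le p(s)$ to get $p(t)\int_0^t E(t,s)\,ds\le\int_0^t p(s)E(t,s)\,ds$, and $p(s)\le p(0)$ to get $p(s)^2E(t,s)\le p(0)\,p(s)E(t,s)$.

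The paper proves (2) instead by the pointwise comparison $E(t,s)\le e^{-\frac12 p(t)(t-s)}$ and integrating the exponential. This gives the marginally sharper intermediate bound $2\bigl(1-e^{-\frac12 p(t)t}\bigr)$. For (3) it uses a second exact derivative, $\partial_s[p(s)E(t,s)]=\bigl(p'(s)+\frac12 p(s)^2\bigr)E(t,s)$, which absorbs the quadratic term without appealing to $p(s)\le p(0)$. It then bounds $\int_0^t(-p'(s))E(t,s)\,ds$ by $p(0)-p(t)$. If you keep the terms you discarded, your two pieces in (3) sum to $p(0)-\frac12 p(t)-\frac12 p(0)E(t,0)$, which is precisely the paper's bound before its final step.

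The advantage of your version is that everything flows from the single identity $\partial_s E(t,s)=\frac12 p(s)E(t,s)$. The advantage of the paper's version is that it introduces the identity $\partial_s\bigl[\frac12 p(s)E(t,s)\bigr]=-q(s)E(t,s)$. The paper reuses exactly this identity later to split the non-integrable $p(s)$-term off the remainder kernel $P_4$.
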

	
	\begin{proof}
		For (1), notice that $E(t, s) p(s) = 2 \frac{\partial}{\partial s} E(t, s)$. Applying the Fundamental Theorem of Calculus directly yields $$\int_0^t 2 \frac{\partial}{\partial s} E(t, s) \, ds = 2(E(t, t) - E(t, 0)) = 2(1 - E(t, 0)) \le 2.$$
		
		For (2), because $p(t)$ is strictly decreasing, $p(\tau) \ge p(t)$ for all $\tau \in [s, t]$. Thus, $-\int_s^t p(\tau) \, d\tau \le -p(t)(t-s)$. Consequently, $E(t, s) \le e^{-\frac{1}{2}p(t)(t-s)}$. Integrating this exponential upper bound yields $$\int_0^t e^{-\frac{1}{2}p(t)(t-s)} \, ds = \frac{2}{p(t)}(1 - e^{-\frac{1}{2}p(t)t}) \le \frac{2}{p(t)}.$$ 
		Multiplying by $p(t)$ gives the bound of $2$.
		
		For (3), we utilize the explicit form $|q(s)| \le \frac{1}{4}p(s)^2 - \frac{1}{2}p'(s)$. Consider the derivative $$\frac{\partial}{\partial s} [p(s) E(t, s)] = p'(s) E(t, s) + \frac{1}{2}p(s)^2 E(t, s).$$ Rearranging this, we find $\frac{1}{4}p(s)^2 E(t, s) = \frac{1}{2} \frac{\partial}{\partial s}[p(s) E(t, s)] - \frac{1}{2}p'(s) E(t, s)$. 
		Therefore, $$E(t, s)|q(s)| \le \frac{1}{2} \frac{\partial}{\partial s}[p(s) E(t, s)] - p'(s) E(t, s).$$ 
		Integrating from $0$ to $t$ gives:
		\[ \int_0^t E(t, s)|q(s)| \, ds \le \frac{1}{2}(p(t) - p(0)E(t, 0)) + \int_0^t -p'(s) E(t, s) \, ds. \]
		Because $p'(s) < 0$ and $E(t, s) \le 1$, the remaining integral is strictly bounded by $\int_0^t -p'(s) \, ds = p(0) - p(t)$. Summing these gives $\frac{1}{2}p(t) - \frac{1}{2}p(0)E(t, 0) + p(0) - p(t) \le p(0) - \frac{1}{2}p(t) \le p(0)$.
	\end{proof}
	
	\begin{theorem}[Leading Order Behavior of $x(t)$] \label{thm:leading_x}
		Define the leading-order sinusoidal kernel $L_1(t, s) = \omega \sin(\omega(t-s)) + \cos(\omega(t-s))$. If $y_1(t) \to 0$ as $t \to \infty$, then $x(t) \to 0$ if and only if:
		\[ \lim_{t \to \infty} \int_0^t E(t, s) L_1(t, s) y_1(s) \, ds = 0. \]
	\end{theorem}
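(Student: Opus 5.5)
The plan is to compare the exact kernel $K_1$ with $E(t,s)L_1(t,s)$. I will show that the remainder produces an integral operator that sends functions tending to zero to functions tending to zero. Since $x(t)=\int_0^t K_1(t,s)y_1(s)\,ds$ by \eqref{eq:x_y1}, this gives
\[
x(t)-\int_0^t E(t,s)L_1(t,s)y_1(s)\,ds \to 0 \quad \text{whenever } y_1(t)\to 0,
\]
and the equivalence follows at once. The argument has three steps: a kernel decomposition, bounds on the remainder, and an estimate of the remainder integral.

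\textbf{Step 1: kernel decomposition.} Writing $R=EG_u$ and $\partial_s E(t,s)=\tfrac12 p(s)E(t,s)$, we get
\[
K_1(t,s)=E(t,s)\Bigl[\omega^2 G_u(t,s)-\tfrac12 p(s)G_u(t,s)-\partial_s G_u(t,s)\Bigr].
\]
The Volterra equation \eqref{eq:volterra} gives $G_u(t,s)=\sin(\omega(t-s))/\omega+R_1(t,s)$. By \eqref{eq:R1bound},
\[
|R_1(t,s)|\le C\int_s^t|q(\tau)|\,d\tau,
\]
with $C$ depending only on $Q(0)$ and $\omega$.

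For the $s$-derivative I will use that $G_u(t,\cdot)=u_1(\cdot)u_2(t)-u_1(t)u_2(\cdot)$ solves the same ODE in $s$, with $G_u(t,t)=0$ and $\partial_sG_u(t,s)|_{s=t}=-1$. Hence it satisfies the backward Volterra equation
\[
G_u(t,s)=\frac{\sin(\omega(t-s))}{\omega}-\frac1\omega\int_s^t \sin(\omega(\tau-s))\,q(\tau)\,G_u(t,\tau)\,d\tau.
\]
Differentiating in $s$ and using the Gronwall bound \eqref{eq:gronwall} gives
\[
\partial_sG_u(t,s)=-\cos(\omega(t-s))+r_2(t,s),\qquad |r_2(t,s)|\le C\int_s^t|q(\tau)|\,d\tau.
\]
Substituting both expansions,
\[
K_1(t,s)=E(t,s)\bigl[L_1(t,s)+r(t,s)\bigr],\qquad |r(t,s)|\le C\Bigl(p(s)+\int_s^t|q(\tau)|\,d\tau\Bigr).
\]

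\textbf{Step 2: the remainder bound.} Since $|q|\le\tfrac14p^2-\tfrac12p'$ and $p$ is decreasing,
\[
\int_s^t|q|\le \tfrac14\int_s^t p(\tau)^2\,d\tau+\tfrac12 p(s).
\]
So it suffices to show that, when $y_1\to0$, the following two integrals tend to zero:
\[
I_1(t)=\int_0^t E(t,s)p(s)|y_1(s)|\,ds,\qquad I_2(t)=\int_0^t E(t,s)\Bigl(\int_s^t p(\tau)^2\,d\tau\Bigr)|y_1(s)|\,ds.
\]
It would be tempting to bound the bracket by $Q(s)\to0$. That fails: $\int_0^tE(t,s)\,ds$ grows like $2/p(t)$, and $Q(s)$ need not be $O(p(s))$. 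This is the step I expect to be the main obstacle, and it is why the two pieces are handled separately.

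\textbf{Step 3: the two estimates.} Given $\epsilon>0$, choose $T$ with $|y_1(s)|<\epsilon$ for $s\ge T$. Note that $E(t,s)\le E(t,T)\to0$ as $t\to\infty$ for $s\le T$, because $p\notin L^1$.

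For $I_1$, the part over $[0,T]$ is at most $E(t,T)\,p(0)T\sup|y_1|\to0$. The part over $[T,t]$ is at most $2\epsilon$ by Lemma \ref{lem:uniform_bounds}(1).

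For $I_2$, I apply Fubini and the factorisation $E(t,s)=E(t,\tau)E(\tau,s)$:
\[
I_2(t)=\int_0^t p(\tau)^2E(t,\tau)\Bigl(\int_0^\tau E(\tau,s)|y_1(s)|\,ds\Bigr)d\tau.
\]
By Lemma \ref{lem:uniform_bounds}(2), the inner integral is at most $2\epsilon/p(\tau)+\sup|y_1|\,T\,E(\tau,T)$. The first contribution gives at most
\[
2\epsilon\int_0^tE(t,\tau)p(\tau)\,d\tau\le4\epsilon,
\]
again by Lemma \ref{lem:uniform_bounds}(1). The second contribution is at most $\sup|y_1|\,T\,E(t,T)\int_0^\infty p^2$, which tends to zero; this uses $\tau\ge T$ (the range $\tau<T$ is a bounded piece times $E(t,T)$). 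Hence $\limsup_{t\to\infty}(I_1+I_2)\le C\epsilon$ for every $\epsilon>0$, which proves the claim.
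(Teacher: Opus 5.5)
Your proof is correct, but it takes a different route from the paper's. The paper never estimates $K_1-EL_1$ pointwise. Instead it differentiates the Volterra equation for $R$ in $s$ to get the exact identity
\[
K_1(t,s)=E(t,s)L_1(t,s)-E(t,s)\frac{p(s)}{2\omega}\sin(\omega(t-s))-\frac{1}{\omega}\int_s^t E(t,\tau)\sin(\omega(t-\tau))\,q(\tau)\,K_1(\tau,s)\,d\tau .
\]
After Fubini, the last term becomes a Volterra term in $x$ itself:
\[
x=x_{L1}-E_1-\frac{1}{\omega}\int_0^t E(t,\tau)\sin(\omega(t-\tau))\,q(\tau)\,x(\tau)\,d\tau ,
\]
with $E_1\to0$ by Lemma~\ref{lem:uniform_bounds}(1). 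The forward direction feeds $x\to0$ into the $q$-term. The reverse direction is a $\limsup$ contraction based on $\int_T^\infty|q|<\omega/2$.

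Because the paper's $q$-term acts on $x$ rather than on $y_1$, it needs only $q\in L^1$, and the unbounded quantity $\int_0^t E(t,s)\,ds$ (of size up to $2/p(t)$) never appears. Your route has to confront that growth directly. This is why you need the $t$-dependent bound $\int_s^t|q|$ instead of $Q(s)$, obtained neatly from the backward Volterra equation for $\partial_s G_u$. It is also why you need the split $|q|\le\frac14p^2-\frac12p'$ with $p$ decreasing, and the factorisation $E(t,s)=E(t,\tau)E(\tau,s)$ combined with Lemma~\ref{lem:uniform_bounds}(2).

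In exchange you get a stronger statement: $x-x_{L1}\to0$ whenever $y_1\to0$, whether or not either of them tends to zero. Both implications then follow symmetrically. You also avoid the paper's $\limsup$ step, which tacitly needs $\limsup_{t\to\infty}|x(t)|<\infty$; that is true, by Gronwall applied to the paper's identity, but the paper does not say so.
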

	
	\begin{proof}
		We construct an exact Volterra equation for $x(t)$ by expanding $K_1(t, s) = \omega^2 R(t, s) - R_s(t, s)$. By differentiating the Volterra equation for $R(t, s)$ with respect to $s$, we find:
		\[ K_1(t, s) = E(t, s) L_1(t, s) - E(t, s)\frac{p(s)}{2\omega}\sin(\omega(t-s)) - \frac{1}{\omega} \int_s^t E(t, \tau) \sin(\omega(t-\tau)) q(\tau) K_1(\tau, s) \, d\tau. \]
		We substitute this into $x(t) = \int_0^t K_1(t, s) y_1(s) \, ds$. The double integral term becomes:
		\[ -\frac{1}{\omega} \int_0^t \left[ \int_s^t E(t, \tau) \sin(\omega(t-\tau)) q(\tau) K_1(\tau, s) \, d\tau \right] y_1(s) \, ds. \]
		By Fubini's Theorem, we swap the order of integration:
		\[ -\frac{1}{\omega} \int_0^t E(t, \tau) \sin(\omega(t-\tau)) q(\tau) \left[ \int_0^\tau K_1(\tau, s) y_1(s) \, ds \right] \, d\tau. \]
		The inner integral is exactly $x(\tau)$. This yields the exact identity:
		\begin{equation} \label{eq:exact_x}
			x(t) = \int_0^t E(t, s) L_1(t, s) y_1(s) \, ds - \int_0^t E(t, s)\frac{p(s)}{2\omega}\sin(\omega(t-s)) y_1(s) \, ds - \frac{1}{\omega} \int_0^t E(t, \tau) \sin(\omega(t-\tau)) q(\tau) x(\tau) \, d\tau.
		\end{equation}
		
		Let $E_1(t)$ denote the middle integral. Because $|E(t, s) \frac{p(s)}{2\omega} \sin(\dots)| \le \frac{1}{2\omega} E(t, s) p(s)$, Lemma \ref{lem:uniform_bounds} guarantees the $L^1$-norm of this kernel with respect to $s$ is uniformly bounded. Since $y_1(t) \to 0$, standard convolution limit theorems imply $E_1(t) \to 0$.
		
		Let $x_{L1}(t) = \int_0^t E(t, s) L_1(t, s) y_1(s) \, ds$. 
		
		\textbf{Forward Direction:} If $x(t) \to 0$, the final integral in \eqref{eq:exact_x} tends to $0$ by Dominated Convergence because $x(\tau) \to 0$ and the weight $E(t, \tau)|q(\tau)|$ is uniformly integrable via Lemma \ref{lem:uniform_bounds}. Thus, $x_{L1}(t) \to 0$.
		
		\textbf{Reverse Direction:} If $x_{L1}(t) \to 0$, let $f(t) = x_{L1}(t) - E_1(t)$. We have $f(t) \to 0$ and $|x(t)| \le |f(t)| + \frac{1}{\omega} \int_0^t E(t, \tau) |q(\tau)| |x(\tau)| \, d\tau$. Since $q \in L^1$, choose $T$ such that $\int_T^\infty |q(\tau)| \, d\tau < \omega / 2$. Splitting the integral at $T$ and taking the limit superior as $t \to \infty$, the $[0, T]$ portion vanishes because $E(t, \tau) \to 0$ for fixed $\tau$. We obtain $\limsup |x(t)| \le \frac{1}{\omega} (\limsup |x(t)|) \int_T^\infty |q(\tau)| \, d\tau \le \frac{1}{2} \limsup |x(t)|$. This implies $\limsup_{t \to \infty} |x(t)| = 0$.
	\end{proof}
	
	\begin{theorem}[Leading Order Behavior of $x'(t)$]
		Define the leading-order sinusoidal kernel $L_2(t, s) = \omega^2 \cos(\omega(t-s)) - \omega \sin(\omega(t-s))$. If $y_1(t) \to 0$ and $x(t) \to 0$ as $t \to \infty$, then $x'(t) \to 0$ if and only if:
		\[ \lim_{t \to \infty} \int_0^t E(t, s) L_2(t, s) y_1(s) \, ds = 0. \]
	\end{theorem}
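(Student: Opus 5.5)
I will mirror the proof of Theorem~\ref{thm:leading_x}, replacing $K_1$ by $K_2=\partial_t K_1$; indeed $K_2(t,s)=\omega^2 R_t-R_{ts}=\partial_t K_1(t,s)$. The first step is to differentiate in $t$ the exact kernel identity for $K_1$ used in the proof of Theorem~\ref{thm:leading_x}:
\[
K_1(t,s)=E(t,s)L_1(t,s)-E(t,s)\frac{p(s)}{2\omega}\sin(\omega(t-s))-\frac1\omega\int_s^t E(t,\tau)\sin(\omega(t-\tau))q(\tau)K_1(\tau,s)\,d\tau .
\]
We use $\partial_t E(t,s)=-\tfrac12 p(t)E(t,s)$ and $\partial_t L_1=L_2$. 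The boundary term from Leibniz's rule vanishes, since $\sin 0=0$. This gives
\begin{align*}
K_2(t,s)={}&E(t,s)L_2(t,s)-\tfrac12 p(t)E(t,s)L_1(t,s)-E(t,s)\tfrac{p(s)}{2}\cos(\omega(t-s))+\tfrac{p(t)p(s)}{4\omega}E(t,s)\sin(\omega(t-s))\\
&-\int_s^t \partial_t\bigl[E(t,\tau)\sin(\omega(t-\tau))\bigr]\tfrac{q(\tau)}{\omega}K_1(\tau,s)\,d\tau .
\end{align*}
Next we multiply by $y_1(s)$, integrate over $[0,t]$, and apply Fubini exactly as before. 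The inner integral again becomes $x(\tau)$, and we obtain
\[
x'(t)=y_1(t)+\int_0^t E(t,s)L_2(t,s)y_1(s)\,ds+\mathcal{E}(t)-\int_0^t\Bigl(\cos(\omega(t-\tau))-\tfrac{p(t)}{2\omega}\sin(\omega(t-\tau))\Bigr)E(t,\tau)q(\tau)x(\tau)\,d\tau ,
\]
where $\mathcal{E}(t)$ collects the three explicit $p$-weighted error integrals applied to $y_1$.

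The second step is to show $\mathcal{E}(t)\to0$ and that the last integral tends to $0$, so that $x'(t)$ and $x_{L2}(t):=\int_0^t E L_2 y_1\,ds$ differ by $o(1)$. Both directions of the equivalence then follow at once. Unlike Theorem~\ref{thm:leading_x}, no Gronwall-type $\limsup$ argument is needed, because the feedback term involves $x$ rather than $x'$, and $x\to0$ is assumed.

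\begin{itemize}
\item The term $\int_0^t E(t,s)p(s)\cos(\cdot)y_1(s)\,ds$ is handled by Lemma~\ref{lem:uniform_bounds}(1).
\item The terms with a factor $p(t)$ are handled by Lemma~\ref{lem:uniform_bounds}(2) together with $p(s)\le p(0)$.
\item The $x$-term is handled by Lemma~\ref{lem:uniform_bounds}(3).
\end{itemize}

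In each case we use the following Toeplitz-type statement. Suppose $k(t,s)\ge0$, $\sup_t\int_0^t k(t,s)\,ds<\infty$, and $\int_0^T k(t,s)\,ds\to0$ for each fixed $T$. If $h\to0$, then $\int_0^t k(t,s)h(s)\,ds\to0$.

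The main obstacle is verifying the vanishing-on-compacts condition for each weight. For weights dominated by $E(t,s)p(s)$ or $E(t,s)|q(s)|$ it follows since $E(t,s)\le E(t,0)/E(s,0)\to0$ uniformly for $s\le T$, because $p\notin L^1$. For the weight $p(t)E(t,s)$ it is even easier, since $p(t)\to0$. I expect the bookkeeping of the $t$-derivative of the kernel identity to be the delicate part: the differentiation under the integral must be justified, which is fine because $E$, $\sin$ and $K_1$ are continuous and bounded by Lemma~\ref{lem:kernel_bounds}. Every extra term must also carry a factor of $p$ or $q$ that is controlled by Lemma~\ref{lem:uniform_bounds}; terms like $p(t)p(s)E$ are bounded by $p(0)\,p(s)E$, so they reduce to part (1).
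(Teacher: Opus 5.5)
Your proof is correct and follows essentially the paper's route. You write $K_2$ exactly as $E L_2$ plus $p(t)$- and $p(s)$-weighted terms plus a $q$-weighted integral against $K_1(\tau,s)$, use Fubini to fold that integral into $x(\tau)$, and use Lemma~\ref{lem:uniform_bounds} with $y_1\to0$ and $x\to0$ to make the remainders vanish. The only difference is that you get the identity by differentiating the $K_1$ identity in $t$ (via $K_2=\partial_t K_1$), which keeps the $-\frac{p(t)}{2\omega}\sin(\omega(t-\tau))$ correction explicitly inside the folded $x$-term, whereas the paper only implicitly absorbs it into $P_2$.
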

	
	\begin{proof}
		Recall that $x'(t) - y_1(t) = \int_0^t K_2(t, s) y_1(s) \, ds$. We expand $K_2(t, s) = \omega^2 R_t(t, s) - R_{ts}(t, s)$ using the Volterra definitions of the Green's function derivatives. Using the identity $$G_{us}(\tau, s) - \omega^2 G_u(\tau, s) = -E(\tau, s)^{-1} K_1(\tau, s) - \frac{1}{2}p(s)G_u(\tau, s),$$ we derive the exact relationship:
		\[ K_2(t, s) = E(t, s) L_2(t, s) + E(t, s) P_2(t, s) - \int_s^t E(t, \tau) \cos(\omega(t-\tau)) q(\tau) K_1(\tau, s) \, d\tau, \]
		where $P_2(t, s)$ collects all remaining terms bounded by $C(p(t) + p(s))$ for some constant $C > 0$.
		
		Multiplying by $y_1(s)$, integrating over $s$, and again applying Fubini's Theorem to swap the bounds of the nested integral, the term involving $K_1(\tau, s)$ folds exactly into $x(\tau)$:
		\[ x'(t) - y_1(t) = \int_0^t E(t, s) L_2(t, s) y_1(s) \, ds + \int_0^t E(t, s) P_2(t, s) y_1(s) \, ds - \int_0^t E(t, \tau) \cos(\omega(t-\tau)) q(\tau) x(\tau) \, d\tau. \]
		Since $y_1(t) \to 0$, $x'(t) \to 0$ if and only if the sum of the three integrals tends to $0$. 
		
		The second integral is governed by the kernel $E(t, s)(p(t) + p(s))$. By bounds (1) and (2) in Lemma \ref{lem:uniform_bounds}, this kernel has a uniformly bounded $L^1$-norm in $s$, so the integral vanishes as $t \to \infty$. 
		
		The final integral vanishes entirely by Dominated Convergence precisely because $x(\tau) \to 0$ (established by hypothesis and Theorem \ref{thm:leading_x}) and the kernel weight $E(t, \tau)|q(\tau)|$ is integrable over $[0, t]$ by bound (3) of Lemma \ref{lem:uniform_bounds}.
		
		Therefore, the asymptotic decay $x'(t) \to 0$ is strictly equivalent to the vanishing of the convolution integral over the isolated leading-order kernel $E(t, s) L_2(t, s)$.
	\end{proof}
	
	\begin{theorem}[Equivalence of Asymptotic Conditions] \label{thm:equivalence}
		Consider the following two statements:
		\begin{enumerate}
			\item[(A)] $y_1(t) \to 0$ as $t \to \infty$, 
			\[ \lim_{t \to \infty} \int_0^t E(t, s) L_1(t, s) y_1(s) \, ds = 0, \quad \text{and} \quad \lim_{t \to \infty} \int_0^t E(t, s) L_2(t, s) y_1(s) \, ds = 0. \]
			\item[(B)] $x(t) \to 0$ as $t \to \infty$ and $x'(t) \to 0$ as $t \to \infty$.
		\end{enumerate}
		Then (A) and (B) are logically equivalent.
	\end{theorem}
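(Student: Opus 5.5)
The plan is to assemble the equivalence from the three preceding results: the necessary condition for joint decay, Theorem~\ref{thm:leading_x}, and the theorem on the leading order behaviour of $x'(t)$. Throughout we work under the standing convention of these sections that $x(0)=x'(0)=0$. Otherwise the homogeneous part $\xi_0[\,p(0)R(t,0)-R_s(t,0)\,]+\xi_1 R(t,0)$ must be handled separately. By Lemma~\ref{lem:kernel_bounds} that part is $O(E(t,0))=O(\rho(t))$, and it tends to zero since $p\notin L^1$. Its $t$-derivative is handled the same way. So it does not affect either (A) or (B), and we may reduce to zero initial data.

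\textbf{(B) $\Rightarrow$ (A).} Assume $x(t)\to0$ and $x'(t)\to0$.
\begin{enumerate}
\item The Necessary Condition for Joint Decay gives $y_1(t)\to0$ immediately.
\item With $y_1\to0$ in hand, the hypothesis of Theorem~\ref{thm:leading_x} holds. Its forward direction, together with $x\to0$, yields $\int_0^t E(t,s)L_1(t,s)y_1(s)\,ds\to0$.
\item Now $y_1\to0$ and $x\to0$ both hold, which are the hypotheses of the theorem on the leading order behaviour of $x'$. Its forward direction, together with $x'\to0$, gives the $L_2$ condition.
\end{enumerate}
This establishes (A).

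\textbf{(A) $\Rightarrow$ (B).} Assume (A).
\begin{enumerate}
\item Since $y_1\to0$ and the $L_1$-integral vanishes, the reverse direction of Theorem~\ref{thm:leading_x} gives $x(t)\to0$.
\item We now know $y_1\to0$ and $x\to0$, so the $x'$ theorem applies. The vanishing of the $L_2$-integral then gives $x'(t)\to0$ by its reverse direction.
\end{enumerate}
The order matters: $x\to0$ must be obtained first, because the $x'$ theorem presupposes it. This is where that theorem's Dominated Convergence step on $\int_0^t E(t,\tau)\cos(\omega(t-\tau))q(\tau)x(\tau)\,d\tau$ uses $x\to0$.

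\textbf{Main obstacle.} The logic itself is a short chaining argument, so the real content lies in the two cited theorems. The step I would scrutinise is the reverse direction of Theorem~\ref{thm:leading_x}. There the limsup argument tacitly requires $\limsup|x(t)|<\infty$, i.e.\ that $x$ is bounded, before one may conclude $\limsup|x|\le\frac12\limsup|x|\Rightarrow\limsup|x|=0$. I would secure boundedness first. Since $y_1$ is bounded, Lemma~\ref{lem:kernel_bounds} together with $\int_0^t E(t,s)\,ds$ bounds would only give growth like $1/p(t)$, which is not enough. Instead I would use the identity \eqref{eq:exact_x} with a Gronwall argument. Its forcing terms $x_{L1}$ and $E_1$ are bounded, and the kernel satisfies $E(t,\tau)|q(\tau)|\le|q(\tau)|\in L^1$. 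Gronwall then gives $\sup_t|x(t)|\le C\exp(\|q\|_{L^1}/\omega)$, after which the limsup argument closes.

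The same boundedness remark justifies the convolution-limit steps, where ``uniformly bounded $L^1$-norm in $s$'' is used. In those steps I would also check that the mass of the kernel on any fixed $[0,T]$ tends to zero; this holds since $E(t,s)\le E(t,T)\to0$.
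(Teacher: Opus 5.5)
Your proof is correct. Apart from one step, it is the same chaining of Theorem~\ref{thm:leading_x} and the $x'$ theorem that the paper uses.

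The difference is how you obtain $y_1(t)\to0$ from (B). The paper treats $x'(t)-y_1(t)=\int_0^t K_2(t,s)y_1(s)\,ds$ as a second-kind Volterra equation for $y_1$. It then asserts, without proof, that the resolvent preserves vanishing limits, so that $x'\to0$ alone forces $y_1\to0$. That assertion is false. Take $c\neq0$, $x(t)=c(1-e^{-t})^2$ and $f=x''+px'+\omega^2x$. Then $x(0)=x'(0)=0$ and $x'(t)\to0$, but $f(t)\to\omega^2c$, so $y_1(t)\to c$.

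You instead invoke the Necessary Condition for Joint Decay from Section~6. Its explicit identity $y_1=x'-\omega^2(g_1*x')+g_1*(px')+\omega^2(g_1*x)$ uses both $x\to0$ and $x'\to0$, which is exactly the information needed. It rests only on the elementary convolution lemma, so your route is the sound one.

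Your supplementary remarks are also right and repair gaps the paper leaves open.
\begin{enumerate}
\item The limsup step in the reverse direction of Theorem~\ref{thm:leading_x} needs $\sup_t|x(t)|<\infty$. Your Gronwall argument on \eqref{eq:exact_x} supplies it: $x_{L1}$ is bounded because it is continuous and tends to zero, $|E_1(t)|\le\|y_1\|_\infty/\omega$ by Lemma~\ref{lem:uniform_bounds}, and $E(t,\tau)|q(\tau)|\le|q(\tau)|$.
\item Since $y_1$ depends only on $f$, discarding the homogeneous part, which tends to zero together with its derivative, is a legitimate reduction to zero initial data.
\end{enumerate}
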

	
	\begin{proof}
		\textbf{Proof that (A) $\implies$ (B):}
		Assume statement (A) holds. We are explicitly given that $y_1(t) \to 0$ as $t \to \infty$. 
		Because $y_1(t) \to 0$ and the integral condition for $L_1$ holds, Theorem \ref{thm:leading_x} directly implies that $x(t) \to 0$ as $t \to \infty$.
		We now have established that both $y_1(t) \to 0$ and $x(t) \to 0$. By applying Theorem \ref{thm:leading_xprime} in the forward direction using the integral condition for $L_2$, we conclude that $x'(t) \to 0$ as $t \to \infty$. Thus, $x(t) \to 0$ and $x'(t) \to 0$, fulfilling statement (B).
		
		\textbf{Proof that (B) $\implies$ (A):}
		Assume statement (B) holds, meaning $x(t) \to 0$ and $x'(t) \to 0$.
		First, we establish that $y_1(t) \to 0$. The relationship $x'(t) - y_1(t) = \int_0^t K_2(t, s) y_1(s) \, ds$ constitutes a Volterra equation of the second kind for the driving function $y_1(t)$. Because $x'(t) \to 0$ and the kernel $K_2(t, s)$ is asymptotically regular (its Volterra resolvent operator preserves the zero limit for bounded, vanishing inputs), it mathematically follows that the inversion $y_1(t)$ must also tend to zero as $t \to \infty$. 
		
		Having established the baseline $y_1(t) \to 0$, we invoke the reverse directions of our preceding theorems.
		Because $y_1(t) \to 0$ and $x(t) \to 0$, the reverse implication of Theorem \ref{thm:leading_x} dictates that the convolution with the leading-order kernel must vanish: $\lim_{t \to \infty} \int_0^t E(t, s) L_1(t, s) y_1(s) \, ds = 0$.
		Likewise, because $y_1(t) \to 0$, $x(t) \to 0$, and $x'(t) \to 0$, the reverse implication of Theorem \ref{thm:leading_xprime} ensures that $\lim_{t \to \infty} \int_0^t E(t, s) L_2(t, s) y_1(s) \, ds = 0$. 
		All conditions of statement (A) are therefore satisfied.
	\end{proof}
	
	\begin{theorem}[Simplification of Integral Conditions] \label{thm:integral_simplification}
		Let $E(t, s) = \frac{A(s)}{A(t)}$ where $A(t) = e^{\frac{1}{2} \int_0^t p(\tau) \, d\tau}$ and $A(t) \to \infty$ as $t \to \infty$. Let $L_1(t, s)$ and $L_2(t, s)$ be defined as:
		\begin{align*}
			L_1(t, s) &= \omega \sin(\omega(t-s)) + \cos(\omega(t-s)), \\
			L_2(t, s) &= \omega^2 \cos(\omega(t-s)) - \omega \sin(\omega(t-s)),
		\end{align*}
		with $\omega \neq 0$. The asymptotic conditions:
		\begin{equation} \label{eq:I1_I2_zero}
			\lim_{t \to \infty} \int_0^t E(t, s) L_1(t, s) y_1(s) \, ds = 0 \quad \text{and} \quad \lim_{t \to \infty} \int_0^t E(t, s) L_2(t, s) y_1(s) \, ds = 0
		\end{equation}
		are logically equivalent to the conditions:
		\begin{equation} \label{eq:little_o_conditions}
			\int_0^t \cos(\omega s) A(s) y_1(s) \, ds = o(A(t)) \quad \text{and} \quad \int_0^t \sin(\omega s) A(s) y_1(s) \, ds = o(A(t)) \quad \text{as } t \to \infty.
		\end{equation}
	\end{theorem}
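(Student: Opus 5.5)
Since $E(t,s)=A(s)/A(t)$, we expand $L_1,L_2$ with the addition formulas and pull the $t$-dependence outside the integrals. Set
\[ C(t)=\int_0^t \cos(\omega s)A(s)y_1(s)\,ds,\qquad S(t)=\int_0^t \sin(\omega s)A(s)y_1(s)\,ds. \]
Writing $\sin(\omega(t-s))=\sin\omega t\cos\omega s-\cos\omega t\sin\omega s$ and $\cos(\omega(t-s))=\cos\omega t\cos\omega s+\sin\omega t\sin\omega s$, we get the exact identities
\begin{align*}
I_1(t)&:=\int_0^t E(t,s)L_1(t,s)y_1(s)\,ds=\frac{1}{A(t)}\Big[(\omega\sin\omega t+\cos\omega t)C(t)+(\sin\omega t-\omega\cos\omega t)S(t)\Big],\\
I_2(t)&:=\int_0^t E(t,s)L_2(t,s)y_1(s)\,ds=\frac{\omega}{A(t)}\Big[(\omega\cos\omega t-\sin\omega t)C(t)+(\omega\sin\omega t+\cos\omega t)S(t)\Big].
\end{align*}
So $(I_1,I_2/\omega)^T=A(t)^{-1}M(t)(C,S)^T$, where $M(t)$ is the $2\times2$ matrix with rows $(\omega\sin\omega t+\cos\omega t,\ \sin\omega t-\omega\cos\omega t)$ and $(\omega\cos\omega t-\sin\omega t,\ \omega\sin\omega t+\cos\omega t)$.

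The direction \eqref{eq:little_o_conditions} $\Rightarrow$ \eqref{eq:I1_I2_zero} is immediate: the entries of $M(t)$ are bounded by $1+\omega$, so $|I_1|+|I_2|\le c\,(|C(t)|+|S(t)|)/A(t)\to0$.

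For the converse we invert $M(t)$, and the key computation is its determinant:
\[ \det M(t)=(\omega\sin\omega t+\cos\omega t)^2-(\sin\omega t-\omega\cos\omega t)(\omega\cos\omega t-\sin\omega t), \]
which simplifies to $(\omega\sin\omega t+\cos\omega t)^2+(\omega\cos\omega t-\sin\omega t)^2=\omega^2+1$. The cross terms $\pm2\omega\sin\omega t\cos\omega t$ cancel. Thus $M(t)$ is $\sqrt{1+\omega^2}$ times a rotation matrix. Its inverse is $M(t)^T/(1+\omega^2)$ (or the adjugate divided by $1+\omega^2$), whose entries are uniformly bounded in $t$. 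Hence
\[ \frac{|C(t)|+|S(t)|}{A(t)}\le c'\big(|I_1(t)|+|I_2(t)|/\omega\big)\to0, \]
which gives \eqref{eq:little_o_conditions}. Here we use $\omega\neq0$ to divide $I_2$ by $\omega$.

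The main point to check carefully is the determinant calculation, because uniform invertibility of $M(t)$, and not merely invertibility at each fixed $t$, is what carries the argument. I would verify the orthogonality structure directly: the first row $(a,b)$ and second row $(-b,a)$ with $a=\omega\sin\omega t+\cos\omega t$ and $b=\sin\omega t-\omega\cos\omega t$, so that $a^2+b^2=1+\omega^2$. The hypothesis $A(t)\to\infty$ is not needed for the equivalence itself; it only explains why the conditions are nontrivial.
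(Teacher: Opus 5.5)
Your proposal is correct and is essentially the paper's proof: you expand $L_1,L_2$ by the addition formulas and write $I_1,I_2$ as a bounded, $t$-dependent linear combination of $C(t)/A(t)$ and $S(t)/A(t)$. You then invert using the constant determinant. The paper keeps the unscaled matrix, with determinant $\omega(\omega^2+1)$, whereas you factor $\omega$ out of the second row to get $1+\omega^2$ and the rotation structure. One cosmetic slip: the entry bound should read $1+|\omega|$ rather than $1+\omega$ when $\omega<0$.
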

	
	\begin{proof}
		First, we formally define the normalized integrals:
		\[ U(t) = \frac{1}{A(t)} \int_0^t \cos(\omega s) A(s) y_1(s) \, ds \quad \text{and} \quad V(t) = \frac{1}{A(t)} \int_0^t \sin(\omega s) A(s) y_1(s) \, ds. \]
		The condition in Equation \eqref{eq:little_o_conditions} states precisely that $U(t) \to 0$ and $V(t) \to 0$ as $t \to \infty$.
		
		We expand the trigonometric terms in $L_1(t, s)$ and $L_2(t, s)$ using the angle difference identities:
		\begin{align*}
			L_1(t, s) &= \omega [\sin(\omega t)\cos(\omega s) - \cos(\omega t)\sin(\omega s)] + [\cos(\omega t)\cos(\omega s) + \sin(\omega t)\sin(\omega s)] \\
			&= [\omega \sin(\omega t) + \cos(\omega t)] \cos(\omega s) + [\sin(\omega t) - \omega \cos(\omega t)] \sin(\omega s).
		\end{align*}
		Similarly, expanding $L_2(t, s)$ yields:
		\begin{align*}
			L_2(t, s) &= \omega^2 [\cos(\omega t)\cos(\omega s) + \sin(\omega t)\sin(\omega s)] - \omega [\sin(\omega t)\cos(\omega s) - \cos(\omega t)\sin(\omega s)] \\
			&= [\omega^2 \cos(\omega t) - \omega \sin(\omega t)] \cos(\omega s) + [\omega^2 \sin(\omega t) + \omega \cos(\omega t)] \sin(\omega s).
		\end{align*}
		
		Let $I_1(t)$ and $I_2(t)$ denote the two integrals in Equation \eqref{eq:I1_I2_zero}. Substituting the expanded forms of $L_1$ and $L_2$ and using $E(t, s) = \frac{A(s)}{A(t)}$, we can express $I_1(t)$ and $I_2(t)$ as linear combinations of $U(t)$ and $V(t)$:
		\begin{align*}
			I_1(t) &= [\omega \sin(\omega t) + \cos(\omega t)] U(t) + [\sin(\omega t) - \omega \cos(\omega t)] V(t), \\
			I_2(t) &= [\omega^2 \cos(\omega t) - \omega \sin(\omega t)] U(t) + [\omega^2 \sin(\omega t) + \omega \cos(\omega t)] V(t).
		\end{align*}
		This forms a time-dependent linear system:
		\[ \begin{pmatrix} I_1(t) \\ I_2(t) \end{pmatrix} = M(t) \begin{pmatrix} U(t) \\ V(t) \end{pmatrix}, \]
		where the coefficient matrix is:
		\[ M(t) = \begin{pmatrix} \omega \sin(\omega t) + \cos(\omega t) & \sin(\omega t) - \omega \cos(\omega t) \\ \omega^2 \cos(\omega t) - \omega \sin(\omega t) & \omega^2 \sin(\omega t) + \omega \cos(\omega t) \end{pmatrix}. \]
		
		We calculate the determinant of $M(t)$. To simplify, let $M_{11} = \omega \sin(\omega t) + \cos(\omega t)$ and $M_{12} = \sin(\omega t) - \omega \cos(\omega t)$. We can observe that:
		\[ M_{21} = -\omega (\sin(\omega t) - \omega \cos(\omega t)) = -\omega M_{12}, \]
		\[ M_{22} = \omega (\omega \sin(\omega t) + \cos(\omega t)) = \omega M_{11}. \]
		Therefore, the determinant is:
		\begin{align*}
			\det M(t) &= M_{11}(\omega M_{11}) - M_{12}(-\omega M_{12}) \\
			&= \omega (M_{11}^2 + M_{12}^2) \\
			&= \omega \left[ (\omega \sin(\omega t) + \cos(\omega t))^2 + (\sin(\omega t) - \omega \cos(\omega t))^2 \right] \\
			&= \omega \left[ \omega^2 \sin^2(\omega t) + 2\omega\sin(\omega t)\cos(\omega t) + \cos^2(\omega t) + \sin^2(\omega t) - 2\omega\sin(\omega t)\cos(\omega t) + \omega^2 \cos^2(\omega t) \right] \\
			&= \omega \left[ \omega^2 (\sin^2(\omega t) + \cos^2(\omega t)) + (\cos^2(\omega t) + \sin^2(\omega t)) \right] \\
			&= \omega (\omega^2 + 1).
		\end{align*}
		
		Because $\omega \neq 0$, the determinant $\det M(t) = \omega(\omega^2 + 1)$ is a strictly non-zero constant for all $t \geq 0$. Furthermore, all entries of $M(t)$ consist of bounded trigonometric functions. Consequently, the matrix $M(t)$ is invertible for all $t$, and its inverse $M^{-1}(t)$ is uniformly bounded. 
		
		Since $M(t)$ and $M^{-1}(t)$ are both bounded, the vector $(I_1(t), I_2(t))^T$ converges to $(0, 0)^T$ as $t \to \infty$ if and only if the vector $(U(t), V(t))^T$ converges to $(0, 0)^T$ as $t \to \infty$. This proves that the conditions in \eqref{eq:I1_I2_zero} are equivalent to the $o(A(t))$ conditions in \eqref{eq:little_o_conditions}.
	\end{proof}
	
	\begin{theorem}[Unified Equivalence of Asymptotic Decay] \label{thm:unified_master}
		Let $A(t) \to \infty$ as $t \to \infty$. Consider the following two statements:
		\begin{enumerate}
			\item[(C)] $y_1(t) \to 0$ as $t \to \infty$, along with the integral conditions:
			\[ \int_0^t \cos(\omega s) A(s) y_1(s) \, ds = o(A(t)) \quad \text{and} \quad \int_0^t \sin(\omega s) A(s) y_1(s) \, ds = o(A(t)) \quad \text{as } t \to \infty. \]
			\item[(D)] $x(t) \to 0$ as $t \to \infty$ and $x'(t) \to 0$ as $t \to \infty$.
		\end{enumerate}
		Then (C) and (D) are logically equivalent.
	\end{theorem}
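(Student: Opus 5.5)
The plan is to obtain Theorem~\ref{thm:unified_master} by chaining Theorem~\ref{thm:equivalence} with Theorem~\ref{thm:integral_simplification}, so that no new analysis is needed.

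First I check the hypotheses of Theorem~\ref{thm:integral_simplification}. By definition,
\[ E(t,s)=\exp\left(-\tfrac12\int_s^t p(\tau)\,d\tau\right)=\frac{A(s)}{A(t)} \quad\text{with}\quad A(t)=e^{\frac12\int_0^t p(\tau)\,d\tau}. \]
The growth $A(t)\to\infty$ is assumed in the statement, and it also follows from hypothesis 3, $p\notin L^1$. Under the standing assumption $\omega\neq 0$, Theorem~\ref{thm:integral_simplification} then shows that the two conditions $I_1(t)\to 0$ and $I_2(t)\to 0$ are equivalent to the pair of $o(A(t))$ conditions in (C). This step is purely algebraic and holds for any locally integrable $y_1$. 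In particular, it does not require $y_1\to 0$.

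For (C) $\Rightarrow$ (D): assume (C). Then $y_1(t)\to 0$ holds directly. The $o(A(t))$ conditions give $I_1,I_2\to 0$ by Theorem~\ref{thm:integral_simplification}, so statement (A) of Theorem~\ref{thm:equivalence} holds. That theorem then yields (B), which is exactly (D).

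For (D) $\Rightarrow$ (C): assume (D). Then (B) holds, so Theorem~\ref{thm:equivalence} gives (A). This supplies $y_1\to 0$ together with $I_1,I_2\to 0$. Applying Theorem~\ref{thm:integral_simplification} in the reverse direction converts $I_1,I_2\to0$ into the two $o(A(t))$ conditions, so (C) holds.

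I expect the only delicate point to be the claim $y_1\to 0$ in the direction (D) $\Rightarrow$ (C). The argument given for this inside Theorem~\ref{thm:equivalence} is sketchy, since it appeals to a Volterra resolvent without proof. I would replace it with the earlier Theorem ``Necessary Condition for Joint Decay''. That theorem shows directly, via Lemma~\ref{lem:convolution}, that $x\to0$ and $x'\to0$ force $y_1\to 0$, under the zero initial data assumed throughout this section. Once $y_1\to0$ is secured in this way, the remaining implications follow formally from the cited theorems.
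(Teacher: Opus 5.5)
Your argument is correct and is the same transitivity argument the paper uses. Theorem~\ref{thm:integral_simplification} identifies (C) with statement (A) through the uniformly invertible matrix $M(t)$, an algebraic step that needs no $y_1\to0$, and Theorem~\ref{thm:equivalence} identifies (A) with (B)$=$(D). Your one refinement is valid under the zero initial data and makes that step rigorous: for (D)$\Rightarrow$(C) you take $y_1\to 0$ from the earlier ``Necessary Condition for Joint Decay'' theorem via Lemma~\ref{lem:convolution}, rather than from the unproved Volterra-resolvent claim inside the proof of Theorem~\ref{thm:equivalence}.
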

	
	\begin{proof}
		By Theorem \ref{thm:integral_simplification}, the asymptotic conditions $$\int_0^t \cos(\omega s) A(s) y_1(s) \, ds = o(A(t)), \quad  \int_0^t \sin(\omega s) A(s) y_1(s) \, ds = o(A(t))$$ are strictly equivalent to the vanishing limits:
		\[ \lim_{t \to \infty} \int_0^t E(t, s) L_1(t, s) y_1(s) \, ds = 0 \quad \text{and} \quad \lim_{t \to \infty} \int_0^t E(t, s) L_2(t, s) y_1(s) \, ds = 0. \]
		Therefore, statement (C) holds if and only if statement (A) from Theorem \ref{thm:equivalence} holds. Since Theorem \ref{thm:equivalence} established that statement (A) is logically equivalent to statement (B) (which is identical to statement (D) here), it follows by transitivity that statement (C) is equivalent to statement (D).
	\end{proof}
	
	\begin{theorem}[Equivalence with Forcing Function via Integration by Parts] \label{thm:forcing_ibp}
		Assume $A(t) = e^{\frac{1}{2} \int_0^t p(\tau) \, d\tau} \to \infty$ as $t \to \infty$, and let the relationship between $y_1(t)$ and $f(t)$ be given by the differential equation
		\[ y_1'(t) = -\omega^2 y_1(t) + f(t), \]
		with initial condition $y_1(0) = 0$. Furthermore, assume that $y_1(t) \to 0$ as $t \to \infty$. 
		
		Consider the following two asymptotic statements:
		\begin{enumerate}
			\item[(S)] $\int_0^t \cos(\omega s) A(s) y_1(s) \, ds = o(A(t))$ \quad and \quad $\int_0^t \sin(\omega s) A(s) y_1(s) \, ds = o(A(t))$ \quad as $t \to \infty$.
			\item[(T)] $\int_0^t \cos(\omega s) A(s) f(s) \, ds = o(A(t))$ \quad and \quad $\int_0^t \sin(\omega s) A(s) f(s) \, ds = o(A(t))$ \quad as $t \to \infty$.
		\end{enumerate}
		Then Statement (S) is logically equivalent to Statement (T).
	\end{theorem}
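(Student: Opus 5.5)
Write $C_y(t)=\int_0^t \cos(\omega s)A(s)y_1(s)\,ds$, $S_y(t)=\int_0^t \sin(\omega s)A(s)y_1(s)\,ds$, and define $C_f,S_f$ analogously with $f$ in place of $y_1$. The plan is to substitute $f=y_1'+\omega^2 y_1$ into $C_f$ and $S_f$ and integrate by parts. Since $A'(s)=\tfrac12 p(s)A(s)$, we have
\[
\frac{d}{ds}\bigl(\cos(\omega s)A(s)\bigr)=A(s)\Bigl(-\omega\sin(\omega s)+\tfrac12 p(s)\cos(\omega s)\Bigr),
\]
and similarly for the sine factor. Using $y_1(0)=0$, this gives the exact identities
\begin{align*}
C_f(t)&=\cos(\omega t)A(t)y_1(t)+\omega S_y(t)+\omega^2 C_y(t)-\tfrac12\int_0^t p(s)\cos(\omega s)A(s)y_1(s)\,ds,\\
S_f(t)&=\sin(\omega t)A(t)y_1(t)-\omega C_y(t)+\omega^2 S_y(t)-\tfrac12\int_0^t p(s)\sin(\omega s)A(s)y_1(s)\,ds.
\end{align*}
Call the last terms $P_c(t)$ and $P_s(t)$.

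Next, divide both identities by $A(t)$ and show that every term other than the $C_y$ and $S_y$ terms is $o(1)$. The boundary terms equal $\cos(\omega t)y_1(t)$ and $\sin(\omega t)y_1(t)$, which tend to $0$ by the standing assumption $y_1\to 0$. For the $p$-weighted terms, note that $P_c(t)/A(t)=-\tfrac12\int_0^t E(t,s)p(s)\cos(\omega s)y_1(s)\,ds$. The kernel $E(t,s)p(s)$ has $s$-integral at most $2$ by Lemma~\ref{lem:uniform_bounds}(1). For fixed $T$, $\int_0^T E(t,s)p(s)\,ds=2(E(t,T)-E(t,0))\to 0$ because $A(t)\to\infty$. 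The usual $\varepsilon$--$T$ splitting, as in Lemma~\ref{lem:convolution}, then shows $P_c(t)/A(t)\to 0$, and likewise $P_s(t)/A(t)\to 0$. This step, the vanishing of the $p$-weighted convolution, is the only place any real care is needed, and it reuses the same argument used for $E_1(t)$ in Theorem~\ref{thm:leading_x}.

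Consequently
\[
\frac{1}{A(t)}\begin{pmatrix} C_f(t)\\ S_f(t)\end{pmatrix}=N\,\frac{1}{A(t)}\begin{pmatrix} C_y(t)\\ S_y(t)\end{pmatrix}+o(1),\qquad N=\begin{pmatrix}\omega^2&\omega\\-\omega&\omega^2\end{pmatrix}.
\]
The matrix $N$ is constant with $\det N=\omega^4+\omega^2=\omega^2(\omega^2+1)\neq 0$ since $\omega\neq 0$. Hence $N$ and $N^{-1}$ are bounded linear maps, so $(C_y,S_y)/A\to 0$ if and only if $(C_f,S_f)/A\to 0$, which is exactly (S)$\iff$(T). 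This mirrors the matrix argument of Theorem~\ref{thm:integral_simplification}, but is simpler because here the matrix is constant.

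Two technical points need checking. First, the integration by parts requires $y_1$ to be absolutely continuous, which holds since $y_1$ solves a linear ODE with forcing $f\in L^1_{\mathrm{loc}}$. Second, a $y_1$ that is merely locally integrable is enough for the splitting argument, because $y_1$ is continuous and tends to $0$, hence is bounded.
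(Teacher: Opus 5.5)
Your proof is correct and follows the paper's route: substitute $f=y_1'+\omega^2 y_1$, integrate by parts using $A'=\tfrac12 pA$, show the boundary and $p$-weighted terms are $o(A(t))$, and invert the constant matrix $N$, whose determinant is $\omega^2(\omega^2+1)\neq 0$. The one difference is the $p$-weighted term: you handle it by an $\varepsilon$--$T$ splitting based on $\int_0^t E(t,s)p(s)\,ds\le 2$ and $E(t,T)\to 0$, while the paper applies L'H\^opital's rule in the $\cdot/\infty$ form (valid since $A'>0$ and $A\to\infty$) to $\int_0^t A'(s)\cos(\omega s)y_1(s)\,ds\,/\,A(t)$, and both arguments are valid.
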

	
	\begin{proof}
		From the given differential equation, we write the forcing function as $f(t) = y_1'(t) + \omega^2 y_1(t)$ and substitute this into the integrals in Statement (T). For the cosine integral, this yields:
		\[ \int_0^t \cos(\omega s) A(s) f(s) \, ds = \int_0^t \cos(\omega s) A(s) y_1'(s) \, ds + \omega^2 \int_0^t \cos(\omega s) A(s) y_1(s) \, ds. \]
		
		We evaluate the first term on the right-hand side using integration by parts. Let $u = \cos(\omega s) A(s)$ and $dv = y_1'(s) \, ds$, which implies $v = y_1(s)$. Using the product rule and the definition $A'(s) = \frac{1}{2} p(s) A(s)$, we find:
		\[ du = \left[ -\omega \sin(\omega s) A(s) + \frac{1}{2} p(s) \cos(\omega s) A(s) \right] ds. \]
		Applying integration by parts gives:
		\begin{eqnarray} 
			\lefteqn{\int_0^t \cos(\omega s) A(s) y_1'(s) \, ds}\nonumber \\ &=& \left[ \cos(\omega s) A(s) y_1(s) \right]_0^t - \int_0^t y_1(s) \left[ -\omega \sin(\omega s) A(s) + \frac{1}{2} p(s) \cos(\omega s) A(s) \right] ds \nonumber \\
			\label{eq:ibp_cos}
			&=& \cos(\omega t) A(t) y_1(t) - y_1(0) A(0) + \omega \int_0^t \sin(\omega s) A(s) y_1(s) \, ds - \int_0^t \frac{1}{2} p(s) A(s) \cos(\omega s) y_1(s) \, ds.
		\end{eqnarray}
		
		We now analyze the asymptotic behavior of the terms in \eqref{eq:ibp_cos} when divided by $A(t)$. First, since $y_1(t) \to 0$ as $t \to \infty$, the boundary term evaluated at $t$ satisfies:
		\[ \frac{\cos(\omega t) A(t) y_1(t)}{A(t)} = \cos(\omega t) y_1(t) \to 0 \implies \cos(\omega t) A(t) y_1(t) = o(A(t)). \]
		The boundary term evaluated at $s=0$ vanishes entirely because $y_1(0) = 0$. Next, observing that $\frac{1}{2} p(s) A(s) = A'(s)$, we apply L'Hôpital's rule to the final integral: \textbf{majorise first}
		\[ \lim_{t \to \infty} \frac{\int_0^t A'(s) \cos(\omega s) y_1(s) \, ds}{A(t)} = \lim_{t \to \infty} \frac{A'(t) \cos(\omega t) y_1(t)}{A'(t)} = \lim_{t \to \infty} \cos(\omega t) y_1(t) = 0. \]
		Thus, this integral is strictly $o(A(t))$.
		
		Substituting \eqref{eq:ibp_cos} back into our expression for the $f(s)$ cosine integral, and collecting the $o(A(t))$ terms, we obtain:
		\begin{equation} \label{eq:T_cos_relation}
			\int_0^t \cos(\omega s) A(s) f(s) \, ds = \omega^2 \int_0^t \cos(\omega s) A(s) y_1(s) \, ds + \omega \int_0^t \sin(\omega s) A(s) y_1(s) \, ds + o(A(t)).
		\end{equation}
		
		By performing an identical integration by parts on $\int_0^t \sin(\omega s) A(s) y_1'(s) \, ds$, we derive the corresponding relation for the sine integral:
		\begin{equation} \label{eq:T_sin_relation}
			\int_0^t \sin(\omega s) A(s) f(s) \, ds = -\omega \int_0^t \cos(\omega s) A(s) y_1(s) \, ds + \omega^2 \int_0^t \sin(\omega s) A(s) y_1(s) \, ds + o(A(t)).
		\end{equation}
		
		Equations \eqref{eq:T_cos_relation} and \eqref{eq:T_sin_relation} form a time-independent linear system. Let $I_{S,c}$ and $I_{S,s}$ denote the cosine and sine integrals of $y_1$ respectively, and let $I_{T,c}$ and $I_{T,s}$ denote the corresponding integrals of $f$. We have:
		\[ \begin{pmatrix} I_{T,c} \\ I_{T,s} \end{pmatrix} = \begin{pmatrix} \omega^2 & \omega \\ -\omega & \omega^2 \end{pmatrix} \begin{pmatrix} I_{S,c} \\ I_{S,s} \end{pmatrix} + \begin{pmatrix} o(A(t)) \\ o(A(t)) \end{pmatrix}. \]
		The determinant of this transformation matrix is $(\omega^2)(\omega^2) - (-\omega)(\omega) = \omega^4 + \omega^2 = \omega^2(\omega^2 + 1)$. Because $\omega \neq 0$, the determinant is strictly non-zero, making the matrix invertible with bounded entries. 
		
		Consequently, the vector $(I_{T,c}, I_{T,s})^T$ is $o(A(t))$ if and only if the vector $(I_{S,c}, I_{S,s})^T$ is $o(A(t))$. Therefore, Statement (S) and Statement (T) are logically equivalent.
	\end{proof}	
	
	\section{Second-Order Filtering: The Function $y_2(t)$}
	
	We further refine our asymptotic analysis by introducing a second filtering function $y_2(t)$. 
	
	\begin{definition}
		Let $e_{\omega^2}(t) = e^{-\omega^2 t}$. Define $y_2(t)$ as the solution to the first-order linear equation:
		\[ y_2'(t) = -\omega^2 y_2(t) + y_1(t), \quad y_2(0) = 0. \]
		Equivalently, $y_2(t)$ is given by the convolution $y_2 = e_{\omega^2} * y_1$.
	\end{definition}
	
	\begin{theorem}[Representations of $y_2$ and $x$]
		Assuming zero initial conditions for $x$, $y_2(t)$ can be represented purely in terms of $x(t)$ as:
		\begin{equation} \label{eq:y2_in_x}
			y_2(t) = x(t) + \int_0^t K_3(t, s) x(s) \, ds,
		\end{equation}
		where $K_3(t, s) = h''(t-s) + \omega^2 h(t-s) + p(s)h'(t-s) - p'(s)h(t-s)$ and $h(t) = t e^{-\omega^2 t}$.
		
		Conversely, $x(t)$ can be represented purely in terms of $y_2$ as:
		\begin{equation} \label{eq:x_in_y2}
			x(t) = y_2(t) + \int_0^t \left[ \omega^2 K_1(t, s) - \frac{\partial K_1(t, s)}{\partial s} \right] y_2(s) \, ds.
		\end{equation}
	\end{theorem}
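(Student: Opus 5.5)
The plan has two independent integration-by-parts computations, one for each identity. For \eqref{eq:y2_in_x} we first express $y_2$ directly in terms of $f$. For \eqref{eq:x_in_y2} we start from \eqref{eq:x_y1} and replace $y_1$ by $y_2$.

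\textbf{Step 1: $y_2$ as a convolution with $f$.} Since $y_1 = e_{\omega^2} * f$ and $y_2 = e_{\omega^2} * y_1$, associativity of convolution gives $y_2 = (e_{\omega^2} * e_{\omega^2}) * f$. The direct computation
\[
\int_0^t e^{-\omega^2(t-s)}e^{-\omega^2 s}\,ds = te^{-\omega^2 t}=h(t)
\]
then yields $y_2 = h * f$.

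\textbf{Step 2: substitute the equation for $x$.} Put $f = x'' + p x' + \omega^2 x$, which is valid a.e. since $x'$ is absolutely continuous when $f\in L^1_{\mathrm{loc}}$. This gives
\[
y_2(t) = \int_0^t h(t-s)x''(s)\,ds + \int_0^t h(t-s)p(s)x'(s)\,ds + \omega^2\int_0^t h(t-s)x(s)\,ds.
\]

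\textbf{Step 3: integrate the first integral by parts twice.} The values needed are $h(0)=0$, $h'(0)=1$ and $x(0)=x'(0)=0$. The first integration by parts leaves only $\int_0^t h'(t-s)x'(s)\,ds$. The second produces the boundary term $h'(0)x(t)=x(t)$ plus $\int_0^t h''(t-s)x(s)\,ds$.

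\textbf{Step 4: integrate the damping integral by parts once.} The boundary terms vanish because $h(0)=0$ and $x(0)=0$. What remains is
\[
-\int_0^t \partial_s[h(t-s)p(s)]\,x(s)\,ds = \int_0^t \bigl(p(s)h'(t-s) - p'(s)h(t-s)\bigr)x(s)\,ds.
\]
Here $p\in C^1$ is used. Collecting Steps 2--4 gives exactly $K_3$, which proves \eqref{eq:y2_in_x}.

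\textbf{Step 5: the converse identity.} Start from \eqref{eq:x_y1}, $x(t)=\int_0^t K_1(t,s)y_1(s)\,ds$, and substitute $y_1 = y_2' + \omega^2 y_2$. Note that $y_2\in C^1$ because $y_1$ is continuous. Integrating $\int_0^t K_1(t,s)y_2'(s)\,ds$ by parts in $s$ gives
\[
K_1(t,t)y_2(t) - K_1(t,0)y_2(0) - \int_0^t \partial_s K_1(t,s)\,y_2(s)\,ds .
\]
The middle term vanishes since $y_2(0)=0$. Adding $\omega^2\int_0^t K_1 y_2\,ds$ therefore yields \eqref{eq:x_in_y2}, provided we show $K_1(t,t)=1$.

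\textbf{Step 6: the diagonal value $K_1(t,t)=1$.} We have $K_1(t,t) = \omega^2R(t,t) - R_s(t,t) = -R_s(t,t)$, since $R(t,t)=0$. From the formula for $\partial R/\partial s$ in Lemma \ref{lem:kernel_bounds},
\[
R_s(t,t) = \tfrac12 p(t)G_u(t,t) + \partial_s G_u(t,s)\big|_{s=t} = \partial_s G_u(t,s)\big|_{s=t}.
\]
Using $G_u(t,s) = u_1(s)u_2(t) - u_1(t)u_2(s)$, this derivative equals $u_1'(t)u_2(t)-u_1(t)u_2'(t) = -W = -1$ by Abel's identity. Hence $K_1(t,t)=1$.

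\textbf{Main obstacle: regularity in Step 5.} The integration by parts requires $s\mapsto K_1(t,s)$ to be $C^1$, which in turn needs $R_{ss}$ to exist. I would establish this directly. The functions $u_1,u_2$ are $C^2$ because they solve the homogeneous equation with continuous coefficient $q$; here $p\in C^1$ makes $q$ continuous. Also $E(t,s)$ is $C^2$ in $s$ as long as $p\in C^1$. So $R$ is $C^2$ in $s$, and $\partial_s K_1$ is continuous on $\{0\le s\le t\}$. For Steps 2--4 it suffices that $x\in C^1$ with $x'$ absolutely continuous, which holds for $f\in L^1_{\mathrm{loc}}$.
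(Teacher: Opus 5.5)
Your proposal is correct and follows essentially the same route as the paper. Both arguments write $y_2 = h*(x''+px'+\omega^2 x)$ with $h=e_{\omega^2}*e_{\omega^2}=te^{-\omega^2 t}$ and integrate by parts twice for the $x''$ term and once for the $px'$ term. Both obtain the converse by substituting $y_1=y_2'+\omega^2 y_2$ into \eqref{eq:x_y1} and using $K_1(t,t)=1$. Your regularity remarks and your explicit Wronskian computation showing $R_s(t,t)=-1$ supply details that the paper only asserts.
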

	
	\begin{proof}
		Recall from Section 6 that $y_1 = e_{\omega^2} * (x'' + px' + \omega^2 x)$. By definition, $y_2 = e_{\omega^2} * y_1$. Convolution is associative, so let $h = e_{\omega^2} * e_{\omega^2}$. Evaluating this yields $h(t) = \int_0^t e^{-\omega^2(t-s)} e^{-\omega^2 s} \, ds = t e^{-\omega^2 t}$.
		Therefore, $y_2 = h * x'' + h * (px') + \omega^2(h * x)$. 
		
		We eliminate the derivatives on $x$ via integration by parts. Since $h(0) = 0$ and $h'(0) = 1$:
		\[ (h * x'')(t) = \int_0^t h(t-s) x''(s) \, ds = \left[ h(t-s)x'(s) \right]_0^t + \int_0^t h'(t-s) x'(s) \, ds = (h' * x')(t). \]
		Integrating by parts again:
		\[ (h' * x')(t) = \left[ h'(t-s)x(s) \right]_0^t + \int_0^t h''(t-s) x(s) \, ds = x(t) + (h'' * x)(t). \]
		For the damping term:
		\[ (h * px')(t) = \left[ h(t-s)p(s)x(s) \right]_0^t - \int_0^t \frac{\partial}{\partial s}\left(h(t-s)p(s)\right) x(s) \, ds. \]
		The boundary term is $0$. The derivative is $-h'(t-s)p(s) + h(t-s)p'(s)$. Thus:
		\[ (h * px')(t) = \int_0^t \left[ h'(t-s)p(s) - h(t-s)p'(s) \right] x(s) \, ds. \]
		Collecting all terms gives the representation \eqref{eq:y2_in_x}.
		
		For the converse relation, recall $x(t) = \int_0^t K_1(t, s) y_1(s) \, ds$ where $K_1 = \omega^2 R - R_s$. Substitute $y_1(s) = y_2'(s) + \omega^2 y_2(s)$:
		\[ x(t) = \int_0^t K_1(t, s) y_2'(s) \, ds + \int_0^t \omega^2 K_1(t, s) y_2(s) \, ds. \]
		Integrate the first term by parts ($u = K_1, dv = y_2' ds$):
		\[ \int_0^t K_1(t, s) y_2'(s) \, ds = \left[ K_1(t, s)y_2(s) \right]_{s=0}^{s=t} - \int_0^t \frac{\partial K_1(t, s)}{\partial s} y_2(s) \, ds. \]
		Since $y_2(0) = 0$, the lower boundary vanishes. At $s=t$, $K_1(t, t) = \omega^2 R(t, t) - R_s(t, t)$. Since $R(t, t) = 0$ and $R_s(t, t) = -1$ (derived via the Wronskian of $G_u$ evaluated at $s=t$), we have $K_1(t, t) = 1$. The boundary term evaluates to exactly $y_2(t)$. Substituting this back gives \eqref{eq:x_in_y2}.
	\end{proof}
	
	\begin{theorem}
		If $x(t) \to 0$ as $t \to \infty$, then $y_2(t) \to 0$ as $t \to \infty$.
	\end{theorem}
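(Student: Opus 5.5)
We start from the representation \eqref{eq:y2_in_x},
\[ y_2(t) = x(t) + \int_0^t K_3(t,s)\,x(s)\,ds, \qquad K_3(t,s) = h''(t-s) + \omega^2 h(t-s) + p(s)h'(t-s) - p'(s)h(t-s), \]
with $h(t)=te^{-\omega^2 t}$, zero initial conditions being assumed as in the preceding theorem. Since $x(t)\to 0$ by hypothesis, it suffices to show that the integral term tends to zero. We use a Toeplitz-type (dominated convergence) argument. If a kernel $k(t,s)$ satisfies
\[ \sup_{t\ge 0}\int_0^t |k(t,s)|\,ds<\infty \quad\text{and}\quad \int_0^T |k(t,s)|\,ds\to 0 \text{ as } t\to\infty \text{ for every fixed } T, \]
then $\int_0^t k(t,s)x(s)\,ds\to 0$ whenever $x$ is bounded and $x(t)\to 0$. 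The proof is the same as for Lemma \ref{lem:convolution}: split the integral at $T$, bound $x$ by $\sup|x|$ on $[0,T]$, and by $\epsilon$ on $[T,t]$. Boundedness of $x$ holds because $x$ is continuous and converges.

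We verify the two conditions for each of the four pieces of $K_3$ in turn.
\begin{enumerate}
\item The convolution pieces $h''(t-s)+\omega^2 h(t-s)$. We have $h'(u)=(1-\omega^2u)e^{-\omega^2u}$ and $h''(u)=(\omega^4u-2\omega^2)e^{-\omega^2u}$, so both are bounded by $C(1+u)e^{-\omega^2 u}$. This is in $L^1(0,\infty)$, and $\int_0^T (1+t-s)e^{-\omega^2(t-s)}\,ds\to 0$ for fixed $T$. Hence these pieces are handled exactly as in Lemma \ref{lem:convolution}, with $e^{-\omega^2 u}$ replaced by an integrable, exponentially decaying kernel.
\item The piece $p(s)h'(t-s)$. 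Since $0<p(s)\le p(0)$, it is dominated by $p(0)|h'(t-s)|$, which reduces to the previous case.
\item The piece $-p'(s)h(t-s)$. Since $-p'>0$ and $|h|\le \sup_{u\ge0} ue^{-\omega^2 u}=1/(e\omega^2)$, its $L^1$-norm in $s$ is at most $(p(0)-p(t))/(e\omega^2)\le p(0)/(e\omega^2)$, using Section 3. On $[0,T]$ it is at most $\sup_{u\ge t-T}|h(u)|\cdot p(0)$, which tends to zero.
\end{enumerate}
Combining these, $\int_0^t K_3(t,s)x(s)\,ds\to 0$, and therefore $y_2(t)\to 0$.

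An alternative route avoids the $p$-terms altogether. From $x''+px'+\omega^2x=f$ we note that $y_2$ is $h*f$. However, this needs control of $x'$, which we do not have, so the kernel route is preferable. The only delicate point is the $p'$ term: $x'$ must not appear, and it does not, because the integrations by parts already moved all derivatives onto $h$ and $p$. The monotonicity of $p$ then gives the integrability of $p'$ for free. I therefore expect no real obstacle beyond carefully checking the Toeplitz conditions for the $p'$ term.
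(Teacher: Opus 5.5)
Your proof is correct and has the same skeleton as the paper's. Both start from \eqref{eq:y2_in_x}, split $K_3$ into the pieces $h''+\omega^2h$, $p(s)h'(t-s)$ and $-p'(s)h(t-s)$, and show that each integral against the vanishing function $x$ tends to zero by the splitting argument of Lemma~\ref{lem:convolution}. Your Toeplitz criterion simply packages that argument once.

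The genuine difference is in the $p'$ term. The paper asserts $p'(t)\to 0$, concludes $p'(t)x(t)\to 0$, and then uses $h\in L^1$. However, $p'(t)\to 0$ is not among the standing hypotheses and does not follow from them. A $C^1$ function decreasing strictly to $0$ can have $p'$ that does not tend to zero, or is even unbounded. For example, add to $1/(1+t)$ smoothed downward steps of height $n^{-2}$ and width $n^{-2}$ (or $n^{-4}$) at $t=n$; this keeps $p\notin L^1$ and $p^2\in L^1$.

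You instead put the integrability on $p'$. You use $\int_0^t|p'(s)|\,ds=p(0)-p(t)\le p(0)$, which comes from monotonicity, together with $|h|\le 1/(e\omega^2)$. This needs no pointwise information about $p'$ at all. So your route proves the theorem under the hypotheses as actually stated. The paper's step (4), by contrast, tacitly requires the extra assumption $p'(t)\to 0$, which the paper only imposes explicitly later, in Theorem~\ref{thm:asymp_implies_y2}.
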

	
	\begin{proof}
		We examine the integral representation \eqref{eq:y2_in_x}:
		\[ y_2(t) = x(t) + \int_0^t \left[ h''(t-s) + \omega^2 h(t-s) \right] x(s) \, ds + \int_0^t \left[ p(s)h'(t-s) - p'(s)h(t-s) \right] x(s) \, ds. \]
		Since $h(t) = t e^{-\omega^2 t}$, the functions $h, h',$ and $h''$ all consist of polynomials multiplied by decaying exponentials, meaning they are absolutely integrable on $[0, \infty)$. 
		
		We analyze the terms individually as $t \to \infty$:
		\begin{enumerate}
			\item $x(t) \to 0$ by hypothesis.
			\item The kernel $h'' + \omega^2 h$ is in $L^1(0, \infty)$. The convolution of an $L^1$ kernel with a function that tends to zero also tends to zero.
			\item Since $x(t) \to 0$ and $p(t) \to 0$, their product $p(t)x(t) \to 0$. Since $h' \in L^1(0, \infty)$, the convolution $(h' * px)(t) \to 0$.
			\item Since $x(t) \to 0$ and $p'(t) \to 0$, their product $p'(t)x(t) \to 0$. Since $h \in L^1(0, \infty)$, the convolution $(h * p'x)(t) \to 0$.
		\end{enumerate}
		Because every additive component of $y_2(t)$ independently tends to $0$, we conclude $y_2(t) \to 0$.
	\end{proof}
	
	\begin{theorem}[Leading Order Behavior of $x(t)$ via $y_2$] \label{thm:leading_x_y2}
		Define the leading-order sinusoidal kernel for the $y_2$ transformation as:
		\[ L_3(t, s) = (\omega^3 - \omega) \sin(\omega(t-s)) + 2\omega^2 \cos(\omega(t-s)). \]
		Assuming zero initial conditions, the state $x(t) \to 0$ as $t \to \infty$ if and only if $y_2(t) \to 0$ as $t \to \infty$ and the following convolution integral vanishes:
		\[ \lim_{t \to \infty} \int_0^t E(t, s) L_3(t, s) y_2(s) \, ds = 0. \]
	\end{theorem}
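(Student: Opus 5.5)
Mirror the proof of Theorem \ref{thm:leading_x}, with the kernel $K_4(t,s)=\omega^2 K_1(t,s)-\partial_s K_1(t,s)$ from \eqref{eq:x_in_y2} in place of $K_1$, and $y_2$ in place of $y_1$. The goal is an exact identity
\[
x(t)=y_2(t)+\int_0^t E(t,s)L_3(t,s)y_2(s)\,ds+\int_0^t E(t,s)P_3(t,s)y_2(s)\,ds-\frac{1}{\omega}\int_0^t E(t,\tau)\,\Phi(t,\tau)\,q(\tau)\,x(\tau)\,d\tau .
\]
Here $P_3$ is bounded by $C(p(t)+p(s)+|p'(s)|)$, and $\Phi$ is a bounded trigonometric factor such as $\sin(\omega(t-\tau))$ or a combination with $\cos$.

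To obtain this, I would start from the Volterra-type identity for $K_1$ used in Theorem \ref{thm:leading_x}:
\[
K_1=E L_1-E\tfrac{p(s)}{2\omega}\sin(\omega(t-s))-\tfrac1\omega\int_s^t E(t,\tau)\sin(\omega(t-\tau))q(\tau)K_1(\tau,s)\,d\tau .
\]
I would then apply $\omega^2-\partial_s$ to it. Using $\partial_s E(t,s)=\tfrac12 p(s)E(t,s)$, the leading part becomes $E(\omega^2 L_1-\partial_s L_1)$ plus $O(p(s))E$. A direct computation gives
\[
\omega^2L_1-\partial_sL_1=(\omega^3-\omega)\sin(\omega(t-s))+2\omega^2\cos(\omega(t-s))=L_3,
\]
which explains the form of $L_3$. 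Differentiating the term in $p(s)\sin$ produces contributions bounded by $E(t,s)(p(s)^2+|p'(s)|+p(s))$, which I collect into $P_3$. In the double integral, the lower-limit derivative yields a boundary term $K_1(s,s)=1$ times $E(t,s)\sin(\omega(t-s))q(s)/\omega$, which is a $q$-weighted kernel. The remaining integral contains $K_4(\tau,s)$. After multiplying by $y_2(s)$ and applying Fubini, that term folds into $\int_0^\tau K_4(\tau,s)y_2(s)\,ds=x(\tau)-y_2(\tau)$, by \eqref{eq:x_in_y2}. So the feedback term carries $x-y_2$ rather than $x$, and I split off a further $q$-weighted integral of $y_2$.

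Every error kernel is then controlled by Lemma \ref{lem:uniform_bounds}. The $p(s)$ pieces are handled by bound (1) and the $p(t)$ pieces by bound (2). The $|q|$ pieces, including the $p^2$ and $|p'|$ terms via $|p'|\le 2|q|+\tfrac12p^2$, are handled by bound (3) together with bound (1) and $p\le p(0)$. Each such kernel has uniformly bounded $L^1$ norm in $s$, and its mass on any fixed $[0,T]$ vanishes as $t\to\infty$ because $E(t,s)\to0$ for fixed $s$ (since $p\notin L^1$). Hence the associated integrals tend to $0$ whenever $y_2\to0$, by the same splitting argument as in Lemma \ref{lem:convolution}.

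For the forward direction, suppose $x\to0$. The preceding theorem gives $y_2\to0$. All error integrals vanish, and the $q$-feedback integral vanishes as in Theorem \ref{thm:leading_x}. Solving the identity for the $L_3$ integral then shows that it tends to $0$.

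For the reverse direction, suppose $y_2\to0$ and the $L_3$ integral tends to $0$. The identity gives $|x(t)|\le|F(t)|+\tfrac{C}{\omega}\int_0^tE(t,\tau)|q(\tau)||x(\tau)|\,d\tau$ with $F(t)\to0$. Before running the limsup argument, I must know that $x$ is bounded. I would get this from a Gronwall estimate using $E\le1$ and $q\in L^1$: $|x|\le\sup|F|\,e^{C\|q\|_1/\omega}$, which requires $\sup|F|<\infty$. That follows because $y_2$ is continuous and tends to zero, hence bounded. Then I choose $T$ with $C\int_T^\infty|q|<\omega/2$ and conclude $\limsup|x|\le\tfrac12\limsup|x|$, so $x\to0$.

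I expect the main obstacle to be the bookkeeping of $\partial_s$ applied to the Volterra identity for $K_1$. This includes justifying differentiation under the integral, which needs the boundary term $K_1(s,s)=1$ and the regularity of $K_1$ in $s$ from Lemma \ref{lem:kernel_bounds}. It also includes confirming that no unbounded non-$q$, non-$p$ term survives apart from $EL_3$. If the direct differentiation becomes too messy, I would fall back on integrating by parts in \eqref{eq:exact_x}, substituting $y_1=y_2'+\omega^2y_2$ there. That yields $EL_3$ from the main term, plus error terms involving $p(s)$ and $q(\tau)$ and the same $x(\tau)$ feedback, without differentiating the resolvent.
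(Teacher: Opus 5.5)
Your proposal is correct, but it takes a different route from the paper.

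\textbf{The paper's route.} The paper never puts $x$ back on the right-hand side. It writes $K_4=EL_3+P_4$ with $P_4=-\tfrac12p(s)EL_1+\omega^2P_1-\partial_sP_1$, and asserts $\sup_t\int_0^t|P_4(t,s)|\,ds<\infty$. Both directions then follow from one fact: such a kernel, whose mass on compacts vanishes, maps null functions $y_2$ to null functions. No Gronwall or contraction step is used.

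\textbf{Your route.} You keep the resolvent implicit. Your remainder kernels are therefore explicit in $p(s)$, $p(s)^2$ and $p'(s)$, at the price of a $q$-weighted feedback term in $x$. The boundary term from $K_1(s,s)=1$ cancels exactly against the $-y_2$ part of the feedback. So your main derivation and your fallback give the same identity; the fallback, integrating \eqref{eq:exact_x} by parts with $y_1=y_2'+\omega^2y_2$, is the cleaner of the two. The identity is
\[
x(t)=y_2(t)+\int_0^t E L_3 y_2\,ds+\int_0^t E P_3 y_2\,ds-\frac1\omega\int_0^t E(t,\tau)\sin(\omega(t-\tau))q(\tau)x(\tau)\,d\tau,\qquad |P_3|\le C\bigl(p(s)+|p'(s)|\bigr).
\]

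\textbf{What your route gains.} You never have to estimate $\partial_sP_1$, that is, derivatives of the resolvent remainder. The paper's justification that every piece of $P_4$ is dominated by $E(t,s)p(s)$ or $E(t,s)|q(s)|$ is too quick. $P_1$ contains the term $-\frac1\omega\int_s^tE(t,\tau)\sin(\omega(t-\tau))q(\tau)K_1(\tau,s)\,d\tau$, which is only $O\bigl(E(t,s)\int_s^t|q|\bigr)$. Its uniform $L^1$ bound needs an extra step. One way is to use $\int_s^t|q|\le\frac12p(s)+\frac14\int_s^tp^2$ together with bounds (1) and (2) of Lemma \ref{lem:uniform_bounds}.

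\textbf{What it costs.} The cost falls on the reverse direction. There you need an a priori Gronwall bound on $x$ before the limsup argument. You rightly supply it; the paper's Theorem \ref{thm:leading_x} omits the corresponding step. The paper's proof of the present theorem is linear in $y_2$, so it does not need that bound at all.
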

	
	\begin{proof}
		From Theorem 11.1, the exact representation of $x(t)$ in terms of $y_2(t)$ is given by the integral equation:
		\[ x(t) = y_2(t) + \int_0^t K_4(t, s) y_2(s) \, ds, \]
		where the exact kernel is defined as $$K_4(t, s) = \omega^2 K_1(t, s) - \frac{\partial K_1(t, s)}{\partial s}.$$ 
		
		\textbf{Step 1: Explicit Kernel Expansion}\\
		We extract the leading-order behavior of $K_4(t, s)$ by substituting the asymptotic expansion $K_1(t, s) = E(t, s)L_1(t, s) + P_1(t, s)$, where $L_1(t, s) = \omega \sin(\omega(t-s)) + \cos(\omega(t-s))$ and $P_1(t, s)$ is the exact remainder kernel established in Theorem 9.2.
		
		Applying the product rule to differentiate the leading term with respect to $s$, and recalling that $\frac{\partial}{\partial s} E(t, s) = \frac{1}{2}p(s)E(t, s)$, we find:
		\begin{align*}
			\frac{\partial}{\partial s} [E(t, s) L_1(t, s)] &= \frac{1}{2}p(s) E(t, s) L_1(t, s) + E(t, s) \frac{\partial L_1(t, s)}{\partial s} \\
			&= \frac{1}{2}p(s) E(t, s) L_1(t, s) + E(t, s) [-\omega^2 \cos(\omega(t-s)) + \omega \sin(\omega(t-s))].
		\end{align*}
		Substituting this into the definition of $K_4(t, s)$, we group the purely oscillatory $\mathcal{O}(1)$ terms (those independent of $p(s)$ and the perturbation) to form $L_3(t,s)$:
		\begin{align*}
			\omega^2 L_1(t, s) - \frac{\partial L_1(t, s)}{\partial s} &= \omega^2[\omega \sin(\omega(t-s)) + \cos(\omega(t-s))] - [-\omega^2 \cos(\omega(t-s)) + \omega \sin(\omega(t-s))] \\
			&= (\omega^3 - \omega) \sin(\omega(t-s)) + 2\omega^2 \cos(\omega(t-s)).
		\end{align*}
		This algebraically simplifies to exactly $L_3(t, s)$. Thus, we can express the full kernel as $K_4(t, s) = E(t, s) L_3(t, s) + P_4(t, s)$, where the remainder kernel $P_4(t, s)$ is given by:
		\[ P_4(t, s) = - \frac{1}{2}p(s)E(t, s)L_1(t, s) + \omega^2 P_1(t, s) - \frac{\partial P_1(t, s)}{\partial s}. \]
		Crucially, every component of $P_4(t, s)$ is bounded by uniform multiples of $E(t, s)p(s)$ and the perturbation weight $E(t, s)|q(s)|$. 
		By Lemma 9.1, these weights are uniformly integrable over $[0, \infty)$ with respect to $s$. Let $\sup_{t \ge 0} \int_0^t |P_4(t, s)| \, ds = M < \infty$.
		
		Substitute this kernel expansion back into the integral for $x(t)$:
		\begin{equation} \label{eq:x_y2_expanded}
			x(t) - y_2(t) = \int_0^t E(t, s) L_3(t, s) y_2(s) \, ds + \int_0^t P_4(t, s) y_2(s) \, ds.
		\end{equation}
		
		\textbf{Step 2: Proof of the Forward Direction}\\
		Assume $x(t) \to 0$. By Theorem 11.2, it immediately follows that $y_2(t) \to 0$. We must show the remainder integral vanishes. Let $\epsilon > 0$. Because $y_2(t) \to 0$, there exists a $T > 0$ such that $|y_2(t)| < \frac{\epsilon}{2M}$ for all $t \ge T$. Furthermore, since $y_2$ is continuous and converges, it is bounded: $|y_2(t)| \le Y$. We split the remainder integral:
		\[ \left| \int_0^t P_4(t, s) y_2(s) \, ds \right| \le \int_0^T |P_4(t, s)| |y_2(s)| \, ds + \int_T^t |P_4(t, s)| |y_2(s)| \, ds. \]
		For the second integral, we apply the uniform bound on $y_2$:
		\[ \int_T^t |P_4(t, s)| |y_2(s)| \, ds \le \frac{\epsilon}{2M} \int_T^t |P_4(t, s)| \, ds \le \frac{\epsilon}{2M} \cdot M = \frac{\epsilon}{2}. \]
		For the first integral, the domain of integration $[0, T]$ is fixed. As $t \to \infty$, the exponential weight $E(t, s) = \exp(-\frac{1}{2}\int_s^t p(\tau)d\tau)$ tends pointwise to $0$ because $p \notin L^1(0, \infty)$. By the Bounded Convergence Theorem, this integral vanishes. Thus, for sufficiently large $t$, the first integral is strictly less than $\epsilon/2$. The sum is bounded by $\epsilon$, proving $\lim_{t \to \infty} \int_0^t P_4(t, s) y_2(s) \, ds = 0$.
		
		Since both $x(t) - y_2(t) \to 0$ and the remainder integral tends to $0$, rearranging equation \eqref{eq:x_y2_expanded} strictly forces $\lim_{t \to \infty} \int_0^t E(t, s) L_3(t, s) y_2(s) \, ds = 0$.
		
		\textbf{Step 3: Proof of the Reverse Direction}\\
		Assume $y_2(t) \to 0$ and $\lim_{t \to \infty} \int_0^t E(t, s) L_3(t, s) y_2(s) \, ds = 0$. Using the exact identical $\epsilon$-$T$ splitting argument from Step 2, the condition $y_2(t) \to 0$ ensures that the remainder integral $\int_0^t P_4(t, s) y_2(s) \, ds$ vanishes. 
		
		Returning to equation \eqref{eq:x_y2_expanded}, we have:
		\[ x(t) = y_2(t) + \underbrace{\int_0^t E(t, s) L_3(t, s) y_2(s) \, ds}_{\to 0 \text{ by hypothesis}} + \underbrace{\int_0^t P_4(t, s) y_2(s) \, ds}_{\to 0 \text{ by splitting argument}}. \]
		Since $y_2(t) \to 0$ and both integrals tend to $0$, their algebraic sum must tend to $0$. Thus, $x(t) \to 0$.
	\end{proof}
	
	\begin{theorem}[Unified Asymptotic Equivalence for $x(t)$] \label{thm:unified_x_only}
		Let $A(t) = e^{\frac{1}{2} \int_0^t p(\tau) \, d\tau} \to \infty$ as $t \to \infty$. Assuming zero initial conditions, $x(t) \to 0$ as $t \to \infty$ is logically equivalent to the simultaneous fulfillment of the following three conditions:
		\begin{enumerate}
			\item $y_2(t) \to 0$ as $t \to \infty$,
			\item $\int_0^t \cos(\omega s) A(s) y_2(s) \, ds = o(A(t))$ as $t \to \infty$,
			\item $\int_0^t \sin(\omega s) A(s) y_2(s) \, ds = o(A(t))$ as $t \to \infty$.
		\end{enumerate}
	\end{theorem}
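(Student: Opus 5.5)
The plan is to combine Theorem~\ref{thm:leading_x_y2} with a single-kernel analogue of the matrix argument in Theorem~\ref{thm:integral_simplification}. By Theorem~\ref{thm:leading_x_y2}, $x(t)\to 0$ holds exactly when $y_2(t)\to 0$ and
\[ J(t) := \int_0^t E(t,s)L_3(t,s)y_2(s)\,ds \to 0. \]
So it suffices to show that, when $y_2(t)\to 0$, the condition $J(t)\to0$ is equivalent to $U_2(t)\to0$ and $V_2(t)\to0$. Here
\[ U_2(t)=\frac{1}{A(t)}\int_0^t\cos(\omega s)A(s)y_2(s)\,ds, \qquad V_2(t)=\frac{1}{A(t)}\int_0^t\sin(\omega s)A(s)y_2(s)\,ds. \]

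First, using $E(t,s)=A(s)/A(t)$ and the angle-difference formulas, write $J(t)=\alpha(t)U_2(t)+\beta(t)V_2(t)$. The coefficients are
\[ \alpha(t)=(\omega^3-\omega)\sin\omega t+2\omega^2\cos\omega t, \qquad \beta(t)=-(\omega^3-\omega)\cos\omega t+2\omega^2\sin\omega t. \]
One direction is then immediate: if $U_2,V_2\to0$, then $J\to0$, because $\alpha$ and $\beta$ are bounded.

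The converse is the main obstacle. Unlike Theorem~\ref{thm:integral_simplification}, there is only one scalar condition, so we cannot invert a $2\times2$ matrix directly. I will manufacture a second equation by differentiating. Put $W(t)=U_2(t)+iV_2(t)$ and $Z(t)=A(t)W(t)=\int_0^t e^{i\omega s}A(s)y_2(s)\,ds$. Then $Z'(t)=e^{i\omega t}A(t)y_2(t)$, and so
\[ W'(t) = -\tfrac12 p(t)W(t)+e^{i\omega t}y_2(t). \]
Hence $|W(t)|\le \int_0^t E(t,s)|y_2(s)|\,ds$. This only gives boundedness-type information, since $E(t,\cdot)$ need not have small $L^1$ norm (compare Lemma~\ref{lem:uniform_bounds}(2)).

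Next, write $J(t)=\mathrm{Re}\big(\overline{c}\,e^{-i\omega t}W(t)\big)$ for a nonzero constant $c$ with $|c|=\sqrt{(\omega^3-\omega)^2+4\omega^4}>0$. Set $\Phi(t)=e^{-i\omega t}W(t)$. Then $\Phi' = -(i\omega+\tfrac12p)\Phi+y_2$, so $\Phi$ is a slowly damped rotation at frequency $\omega$ driven by the vanishing input $y_2$. Suppose $\mathrm{Re}(\overline{c}\Phi)\to0$ but $|\Phi|\not\to0$. Along a sequence $t_n$ with $|\Phi(t_n)|\ge\delta$, integrate the ODE over the window $[t_n,t_n+\pi/(2\omega)]$. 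Since $p(t)\to0$ and $y_2(t)\to0$, on this window $\Phi(t)=e^{-i\omega(t-t_n)}\Phi(t_n)+o(1)$, using the boundedness of $W$ from the previous step. The rotation turns $\overline{c}\Phi$ through a quarter turn, so $\mathrm{Re}(\overline{c}\Phi)$ must be at least of order $\delta|c|/\sqrt2$ somewhere in the window, contradicting $J\to0$. Therefore $\Phi\to0$, and hence $U_2,V_2\to0$.

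The delicate points are checking that $W$ is bounded (so the $o(1)$ perturbation estimate over bounded windows is legitimate) and making the window estimate uniform. For boundedness, the cleanest route is the following. Once $x\to0$, or $J\to0$ with $y_2\to0$, $W$ obeys a linear ODE with bounded forcing, and $|W(t)|\le\sup_{s\le t}|y_2(s)|\cdot\int_0^tE(t,s)\,ds$ could grow. So I would instead bound $W$ by a Gronwall argument on each window, which needs only local control. Concretely: on $[t_n,t_n+\pi/\omega]$, Gronwall gives $|\Phi(t)-e^{-i\omega(t-t_n)}\Phi(t_n)|\le C\big(\sup_{[t_n,\infty)}p\cdot|\Phi(t_n)|+\sup_{[t_n,\infty)}|y_2|\big)$. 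This estimate is relative to $|\Phi(t_n)|$, so dividing by $|\Phi(t_n)|$ handles unbounded cases as well: choose $t_n$ where $|\Phi(t_n)|\ge\delta$, and the relative error tends to zero. Finally, combining this equivalence with Theorem~\ref{thm:leading_x_y2} gives the stated equivalence.
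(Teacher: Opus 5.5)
Your argument is correct, and its skeleton matches the paper's. You reduce to Theorem~\ref{thm:leading_x_y2} and write the $L_3$-integral as $\alpha U_2+\beta V_2$; your $\alpha,\beta$ are exactly the paper's $C_1,C_2$. The backward direction follows from boundedness of the coefficients. For the forward direction you combine a quarter-period shift $\pi/(2\omega)$ with the slow variation of $(U_2,V_2)$ coming from $U_2'=\cos(\omega t)y_2-\tfrac12 pU_2$.

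The difference is in how the forward step is closed. The paper works in two stages. Step 2 proves that $(U_2,V_2)$ is globally bounded, using a delayed $2\times 2$ system and a running-maximum contraction argument. Only then do Steps 3--4 use that bound to get $U_2',V_2'\to0$, hence $U_2(t+\pi/(2\omega))=U_2(t)+o(1)$, and invert a real system with determinant $C_1^2+C_2^2=\omega^2(\omega^2+1)^2$.

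You instead pass to $\Phi=e^{-i\omega t}(U_2+iV_2)$ and note $J=\mathrm{Re}(\overline{c}\,\Phi)$ with $|c|=\omega(\omega^2+1)$. Your window Gronwall estimate has error proportional to $|\Phi(t_n)|$, apart from the $y_2$ contribution, which the lower bound $|\Phi(t_n)|\ge\delta$ makes relatively small. So no a priori boundedness is needed: the paper's Step 2 disappears, and the determinant computation is replaced by the elementary inequality $\max(|\mathrm{Re}\,z|,|\mathrm{Im}\,z|)\ge|z|/\sqrt2$. If you start the same window estimate at an arbitrary large $t$, it gives the direct bound $|\Phi(t)|\le C\bigl(|J(t)|+|J(t+\pi/(2\omega))|+\sup_{[t,\infty)}|y_2|\bigr)$, so the contradiction framing is optional. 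Your route is shorter; the paper's stays in real variables and records global boundedness of $(U_2,V_2)$ as a by-product.

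One thing to fix in the write-up. Your third paragraph appeals to ``the boundedness of $W$ from the previous step,'' which was never established. The bound $|W(t)|\le\int_0^tE(t,s)|y_2(s)|\,ds$ does not give it, since $\int_0^tE(t,s)\,ds$ is unbounded for, e.g., $p(t)=1/(1+t)$. State the window approximation directly in the relative form $|\Phi(t)-e^{-i\omega(t-t_n)}\Phi(t_n)|\le\eta_n|\Phi(t_n)|$ with $\eta_n\to0$, as you do in your final paragraph.
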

	
	\begin{proof}
		We will utilize Theorem \ref{thm:leading_x_y2}, which established that $x(t) \to 0$ if and only if $y_2(t) \to 0$ and the integral condition with $L_3(t, s)$ vanishes.
		
		\textbf{Step 1: Setup and Matrix Representation}\\
		Define the normalized integral functions:
		\[ U_2(t) = \frac{1}{A(t)} \int_0^t \cos(\omega s) A(s) y_2(s) \, ds \quad \text{and} \quad V_2(t) = \frac{1}{A(t)} \int_0^t \sin(\omega s) A(s) y_2(s) \, ds. \]
		Conditions (2) and (3) are precisely the statements that $U_2(t) \to 0$ and $V_2(t) \to 0$.
		
		Using trigonometric addition formulas (identically to Theorem 10.2), we expand $L_3(t, s)$ and factor out the $s$-dependent terms. We define $W(t)$ as the normalized $L_3$ integral:
		\[ W(t) = \int_0^t E(t, s) L_3(t, s) y_2(s) \, ds = C_1(t) U_2(t) + C_2(t) V_2(t), \]
		where the coefficient functions are purely oscillatory:
		\begin{align*}
			C_1(t) &= (\omega^3 - \omega)\sin(\omega t) + 2\omega^2\cos(\omega t), \\
			C_2(t) &= -(\omega^3 - \omega)\cos(\omega t) + 2\omega^2\sin(\omega t).
		\end{align*}
		
		If $y_2 \to 0$, $U_2 \to 0$, and $V_2 \to 0$, then because $C_1$ and $C_2$ are bounded, $W(t) \to 0$. By Theorem \ref{thm:leading_x_y2}, this immediately proves the backward direction: conditions (1), (2), and (3) imply $x(t) \to 0$.
				
		\subsubsection*{Step 2: Bounding the Integral Terms via Localized Contraction}
		
		We have established that the linear combination $W(t)=C_1(t)U_2(t)+C_2(t)V_2(t)$ is uniformly bounded. To rigorously show that $U_2(t)$ and $V_2(t)$ are individually globally bounded, we construct a full-rank system by evaluating $W$ at a time delay $\tau=\frac{\pi}{2\omega}$.
		
		Using the quarter-period phase shift properties $C_1(t-\tau)=C_2(t)$ and $C_2(t-\tau)=-C_1(t)$, we obtain the delayed equation:
		$$W(t-\tau)=C_2(t)U_2(t-\tau)-C_1(t)V_2(t-\tau).$$
		Let $X(t)=(U_2(t),V_2(t))^T$. We rewrite the delayed state as $X(t-\tau)=X(t)+\Delta(t)$, where $\Delta(t)$ is the backward translation error. This allows us to express the system evaluated exactly at time $t$:
		$$\begin{pmatrix} W(t) \\ W(t-\tau) \end{pmatrix}=\begin{pmatrix} C_1(t) & C_2(t) \\ C_2(t) & -C_1(t) \end{pmatrix}X(t)+\begin{pmatrix} 0 \\ C_2(t)\Delta_U(t)-C_1(t)\Delta_V(t) \end{pmatrix}$$
		
		The coefficient matrix $B(t)$ has determinant $-\omega^2(\omega^2+1)^2 \neq 0$. Because this determinant is a non-zero constant, the inverse matrix $B(t)^{-1}$ exists and is uniformly bounded by a constant $K_B$. Solving for $X(t)$ and taking the norm, we bound the state vector:
		$$\|X(t)\|\le K_B\left( \left\|\begin{pmatrix} W(t) \\ W(t-\tau) \end{pmatrix}\right\| + C_{max}\|\Delta(t)\| \right)$$
		where $C_{max}$ is the maximum magnitude of the oscillatory coefficients $C_1$ and $C_2$.
		
		By the Fundamental Theorem of Calculus, the translation error is $\Delta(t)=-\int_{t-\tau}^t X'(s) ds$. Substituting the derivative definitions $U_2'(s)$ and $V_2'(s)$, and noting that $y_2(s)$ is globally bounded by $M_y$, we bound the error:
		$$\|\Delta(t)\|\le\int_{t-\tau}^t \left(M_y+\frac{1}{2}p(s)\|X(s)\|\right) ds$$
		
		Since $W(t)$ is globally bounded, we can absorb the $W$ terms and the integral of the constant $M_y$ over the fixed interval $\tau$ into a single global constant $K_1$. Letting $K_2=\frac{1}{2}K_B C_{max}$, our bound simplifies to:
		$$\|X(t)\|\le K_1+K_2\int_{t-\tau}^t p(s)\|X(s)\| ds$$
		
		Because $p(s) \to 0$ as $s \to \infty$, there exists a sufficiently large time $T$ such that for all $s \ge T-\tau$, we have $p(s) < \frac{1}{2K_2\tau}$. 
		
		Let $M(t)=\max_{s \in [T-\tau, t]} \|X(s)\|$ represent the running maximum of the state norm. For any intermediate time $t^* \in [T, t]$, we can bound the integral:
		$$\|X(t^*)\|\le K_1+\left(K_2 \tau \max_{s \in [t^*-\tau, t^*]} p(s)\right) M(t) \le K_1+\frac{1}{2}M(t)$$
		
		Since this inequality holds for all $t^*$ up to $t$, it firmly bounds the running maximum itself:
		$$M(t)\le\max\left(M(T), K_1+\frac{1}{2}M(t)\right)$$
		
		This immediately implies that $M(t) \le \max(M(T), 2K_1)$ for all $t \ge T$. Since $X(t)$ is continuous on the initial interval $[0, T]$, it is globally bounded for all $t \ge 0$. Consequently, both $U_2(t)$ and $V_2(t)$ are globally bounded.
		
		\textbf{Step 3: The Translation Argument}\\
				By the quotient rule and the Fundamental Theorem of Calculus (recalling $A'(t) = \frac{1}{2}p(t)A(t)$), the derivative of $U_2(t)$ is:
				\[ U_2'(t) = \frac{A(t) [\cos(\omega t) A(t) y_2(t)] - A'(t) \int_0^t \cos(\omega s) A(s) y_2(s) \, ds}{A(t)^2} = \cos(\omega t) y_2(t) - \frac{1}{2}p(t) U_2(t). \]
				A similar calculation holds for $V'$. 
				Since $U_2(t)$ and $V_2(t)$ are globally bounded and $y_2(t) \to 0$ and $p(t) \to 0$, it strictly follows that $U_2'(t) \to 0$ and $V_2'(t) \to 0$. Thus, $U_2$ and $V_2$ are slowly varying functions.
		
		We evaluate the identity $W(t) = C_1(t) U_2(t) + C_2(t) V_2(t)$ at a shifted time $t^* = t + \frac{\pi}{2\omega}$. 
		By the Mean Value Theorem, $U_2(t^*) - U_2(t) = \frac{\pi}{2\omega} U_2'(\xi)$ for some $\xi \in (t, t^*)$. Since $U_2' \to 0$ globally, this difference tends to zero, meaning $U_2(t^*) = U_2(t) + o(1)$. The same holds for $V_2(t^*)$. 
		
		Furthermore, shifting the oscillatory functions by a quarter-period yields:
		\begin{align*}
			C_1(t^*) &= (\omega^3-\omega)\sin\left(\omega t + \frac{\pi}{2}\right) + 2\omega^2\cos\left(\omega t + \frac{\pi}{2}\right) = (\omega^3-\omega)\cos(\omega t) - 2\omega^2\sin(\omega t) = -C_2(t), \\
			C_2(t^*) &= -(\omega^3-\omega)\cos\left(\omega t + \frac{\pi}{2}\right) + 2\omega^2\sin\left(\omega t + \frac{\pi}{2}\right) = (\omega^3-\omega)\sin(\omega t) + 2\omega^2\cos(\omega t) = C_1(t).
		\end{align*}
		Evaluating $W(t^*)$ utilizing these shifted identities gives:
		\[ W(t^*) = C_1(t^*)U_2(t^*) + C_2(t^*)V_2(t^*) = -C_2(t)[U_2(t) + o(1)] + C_1(t)[V_2(t) + o(1)]. \]
		Since $C_1$ and $C_2$ are bounded, the $o(1)$ terms vanish asymptotically. We are left with the coupled $2 \times 2$ time-dependent linear system:
		\[ \begin{pmatrix} W(t) \\ W(t^*) \end{pmatrix} = \begin{pmatrix} C_1(t) & C_2(t) \\ -C_2(t) & C_1(t) \end{pmatrix} \begin{pmatrix} U_2(t) \\ V_2(t) \end{pmatrix} + \begin{pmatrix} 0 \\ o(1) \end{pmatrix}. \]
		
		\textbf{Step 4: Matrix Inversion}\\
		The determinant of this transformation matrix is $C_1(t)^2 + C_2(t)^2$. Calculating this:
		\begin{align*}
			C_1^2 + C_2^2 &= \left[ (\omega^3-\omega)\sin(\omega t) + 2\omega^2\cos(\omega t) \right]^2 + \left[ -(\omega^3-\omega)\cos(\omega t) + 2\omega^2\sin(\omega t) \right]^2 \\
			&= (\omega^3-\omega)^2 (\sin^2(\omega t) + \cos^2(\omega t)) + (2\omega^2)^2 (\cos^2(\omega t) + \sin^2(\omega t)) \\
			&= (\omega^6 - 2\omega^4 + \omega^2) + 4\omega^4 = \omega^6 + 2\omega^4 + \omega^2 = \omega^2(\omega^2 + 1)^2.
		\end{align*}
		Because $\omega \neq 0$, the determinant is a strictly positive, non-zero constant for all time. Thus, the matrix is uniformly invertible. Since $W(t) \to 0$, we also have $W(t^*) \to 0$. Inverting the system strictly demands that $U_2(t) \to 0$ and $V_2(t) \to 0$. This establishes the necessary conditions (2) and (3), completing the proof of equivalence.
	\end{proof}
	
	\section{Preservation of Unperturbed Dynamics of $x$}
	
	We have seen that the unperturbed oscillator naturally decays at a rate of $1/A(t)$. We now establish the ultimate physical consequence of our filtering method: the necessary and sufficient conditions for the fully perturbed state $x(t)$ to strictly converge to the exact asymptotic trajectory of the unperturbed system.
	
	\subsection{Subexponential Decay Bounds}
	We now shift our focus to the exact rate of decay of the unperturbed system, which is governed by the weight function $$\rho(t) = \frac{1}{A(t)} = \exp\left(-\frac{1}{2}\int_0^t p(s) \, ds\right).$$ Because $p(t) \to 0$ as $t \to \infty$, the decay of $\rho(t)$ is strictly subexponential. In this context, a function is subexponential if 
	$$\lim_{t \to \infty} \frac{\rho'(t)}{\rho(t)} = 0.$$ 
	
	\begin{lemma}[Convolution with Subexponential Weights] \label{lem:subexp_conv}
		Let $\rho$ be a subexponential function. Let $k$ be such that there exist constants $C > 0$ and $\alpha > 0$ such that $|k(t)| \le C e^{-\alpha t}$ for all $t \ge 0$. 
		If a continuous function $z$ satisfies $z(t) = O(\rho(t))$ as $t \to \infty$, then $(k * z)(t) = O(\rho(t))$ as $t \to \infty$.
	\end{lemma}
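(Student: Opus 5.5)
Writing $\rho$ subexponential as $\rho'(t)/\rho(t)\to 0$ implicitly requires $\rho$ positive and $C^1$ (in our application $\rho(t)=1/A(t)$, so $\rho'/\rho=-\tfrac12 p\to 0$). The idea is to show that over time lags $u$, the ratio $\rho(t-u)/\rho(t)$ grows at most like $e^{\epsilon u}$ for any small $\epsilon>0$. This growth is beaten by the exponential decay of $k$.

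\emph{Step 1 (ratio bound).} Fix $\epsilon\in(0,\alpha)$, say $\epsilon=\alpha/2$. Choose $T_0$ such that $|\rho'(s)/\rho(s)|\le\epsilon$ for $s\ge T_0$. For $T_0\le s\le t$, integrating $\rho'/\rho$ over $[s,t]$ gives
\[
\frac{\rho(s)}{\rho(t)}=\exp\Big(-\int_s^t\frac{\rho'(v)}{\rho(v)}\,dv\Big)\le e^{\epsilon(t-s)}.
\]
This is the key estimate.

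\emph{Step 2 (splitting the convolution).} By hypothesis there are $T_1$ and $C_z$ with $|z(s)|\le C_z\rho(s)$ for $s\ge T_1$. Put $T=\max(T_0,T_1)$. Since $z$ is continuous, $|z|\le Z$ on $[0,T]$. For $t\ge T$ split
\[
(k*z)(t)=\int_0^T k(t-s)z(s)\,ds+\int_T^t k(t-s)z(s)\,ds .
\]
The second piece is bounded by
\[
C C_z\int_T^t e^{-\alpha(t-s)}\rho(s)\,ds\le CC_z\,\rho(t)\int_T^t e^{-(\alpha-\epsilon)(t-s)}\,ds\le \frac{CC_z}{\alpha-\epsilon}\,\rho(t).
\]
The first piece is bounded by $CZT\,e^{-\alpha(t-T)}$.

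\emph{Step 3 (the initial segment).} This is the only delicate point. We must show that $e^{-\alpha t}=O(\rho(t))$, i.e.\ that a subexponential $\rho$ decays more slowly than any exponential. Step 1 with $s=T$ gives $\rho(t)\ge\rho(T)e^{-\epsilon(t-T)}$. Hence
\[
e^{-\alpha(t-T)}\le e^{-\epsilon(t-T)}\le \rho(t)/\rho(T).
\]
So the first piece is at most $\dfrac{CZT}{\rho(T)}\,\rho(t)$. Adding the two pieces gives $(k*z)(t)=O(\rho(t))$.

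The main obstacle is thus only to make explicit that subexponentiality gives the two-sided comparison of Step 1 with an exponent strictly below $\alpha$. Once $\epsilon<\alpha$ is fixed, the rest is a routine split of the convolution.
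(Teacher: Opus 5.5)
Your proof is correct, but it takes a different route from the paper's.

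\textbf{The paper's route.} The paper takes a global bound $|z(t)|\le M\rho(t)$ on all of $[0,\infty)$, which follows from continuity of $z$ and positivity of $\rho$. This reduces the problem to estimating $CMe^{-\alpha t}\int_0^t e^{\alpha s}\rho(s)\,ds$. It then applies L'H\^opital's rule to get $\int_0^t e^{\alpha s}\rho(s)\,ds\sim e^{\alpha t}\rho(t)/\alpha$, using $\rho'/\rho\to 0$ only inside the limit of the derivative quotient.

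\textbf{Your route.} You integrate $\rho'/\rho$ to get the explicit comparison $\rho(s)/\rho(t)\le e^{\epsilon(t-s)}$ for $T_0\le s\le t$, and then split the convolution at $T$.

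\textbf{Comparison.} The paper's argument is shorter and gives the sharp constant $1/\alpha$. However, its L'H\^opital step (in the $\cdot/\infty$ form) needs $e^{\alpha t}\rho(t)\to\infty$, and the paper never states this. That requirement is exactly your Step 3 fact: a subexponential $\rho$ dominates every decaying exponential. Your argument makes that fact explicit and gives explicit constants.

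It also has a practical advantage: it upgrades directly to an $o(\rho)$ version. If you replace $C_z$ by an arbitrary $\eta>0$ on $[T,\infty)$, the tail piece is bounded by $C\eta\rho(t)/(\alpha-\epsilon)$. Your Step 3 shows the initial segment is $O\bigl(e^{-(\alpha-\epsilon)(t-T)}\rho(t)\bigr)=o(\rho(t))$. This $o(\rho)$ version is what the paper actually invokes in Theorem~\ref{thm:asymp_implies_y2}, for $x_1=o(\rho)$ and for $px$ and $p'x$, even though the lemma as stated only asserts $O(\rho)$.
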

	
	\begin{proof}
		By hypothesis, there exists a constant $M > 0$ such that $|z(t)| \le M \rho(t)$ for all $t \ge 0$. We bound the absolute value of the convolution:
		\[ |(k * z)(t)| \le \int_0^t |k(t-s)| |z(s)| \, ds \le C M \int_0^t e^{-\alpha(t-s)} \rho(s) \, ds = C M e^{-\alpha t} \int_0^t e^{\alpha s} \rho(s) \, ds. \]
		To analyze the asymptotic behavior of this bounding function, we apply L'H\^opital's rule:
		\[ \lim_{t \to \infty} \frac{\int_0^t e^{\alpha s} \rho(s) \, ds}{e^{\alpha t} \rho(t)} = \lim_{t \to \infty} \frac{e^{\alpha t} \rho(t)}{\alpha e^{\alpha t} \rho(t) + e^{\alpha t} \rho'(t)} = \lim_{t \to \infty} \frac{1}{\alpha + \frac{\rho'(t)}{\rho(t)}}. \]
		Since $\rho'(t)/\rho(t) \to 0$ by the subexponential property of $\rho$, the limit evaluates exactly to $1/\alpha$. 
		Consequently, as $t \to \infty$, the integral $\int_0^t e^{-\alpha(t-s)} \rho(s) \, ds$ is asymptotically equivalent to $\rho(t)/\alpha$. Therefore, $(k * z)(t) = O(\rho(t))$, completing the proof.
	\end{proof}
	
		\subsection{Unperturbed dynamics preserved implies $y_2=o(1/A)$} 
	
	We now prove the implication: if the physical state converges to the unperturbed asymptotic trajectory, then the filtered residual $y_2(t)$ must decay strictly faster than the unperturbed amplitude envelope $1/A(t)$. 
	
	\begin{theorem}[Asymptotic Form Implies $o(1/A(t))$ Residual] \label{thm:asymp_implies_y2}
		Let $\rho(t) = 1/A(t)$. Assume $p(t) \to 0$ and $p'(t) \to 0$ as $t \to \infty$. 
		If the physical state satisfies
		\[ x(t) = x_0(t) + x_1(t) = \rho(t)\left[c_1 \sin(\omega t) + c_2 \cos(\omega t)\right] + o(\rho(t)) \quad \text{as } t \to \infty, \]
		where $x_0(t)$ is the leading oscillatory term and $x_1(t) = o(\rho(t))$ is the remainder, then the second-order filtered forcing satisfies $y_2(t) = o(\rho(t))$.
	\end{theorem}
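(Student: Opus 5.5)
The plan is to work directly with the definition $y_2 = h * f$, where $h = e_{\omega^2} * e_{\omega^2}$, $h(t) = te^{-\omega^2 t}$, and $f = x'' + p x' + \omega^2 x$. We split $x = x_0 + x_1$, so that by linearity
\[ y_2 = h * (x_0'' + p x_0' + \omega^2 x_0) + h * (x_1'' + p x_1' + \omega^2 x_1). \]
Both $x_0$ and $x_1 = x - x_0$ are $C^2$, so both pieces make sense. Each piece will be shown to be $o(\rho(t))$ separately.

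\textbf{Preliminary: an $o$-version of Lemma \ref{lem:subexp_conv}.} If $|k(t)| \le Ce^{-\alpha t}$ and $z = o(\rho)$, then $k * z = o(\rho)$. The proof is the same L'H\^opital computation used for Lemma \ref{lem:subexp_conv}. Given $\epsilon > 0$, pick $T$ with $|z(s)| \le \epsilon\rho(s)$ for $s \ge T$. The integral over $[T,t]$ is then at most $C\epsilon\rho(t)(1/\alpha + o(1))$. The integral over $[0,T]$ is $O(e^{-\alpha t})$, and this is $o(\rho(t))$ because $\rho$ is subexponential: $\rho'/\rho = -p/2 \to 0$ forces $e^{\alpha t}\rho(t) \to \infty$. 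The same observation shows that any exponentially decaying term is $o(\rho)$.

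\textbf{The leading term $x_0$.} Write $x_0 = \rho\varphi$ with $\varphi = c_1\sin(\omega t) + c_2\cos(\omega t)$. We use $\varphi'' = -\omega^2\varphi$, $\rho' = -\tfrac12 p\rho$ and $\rho'' = (\tfrac14 p^2 - \tfrac12 p')\rho$. A direct computation gives
\[ x_0'' + p x_0' + \omega^2 x_0 = \rho\varphi\left(\rho''/\rho + p\rho'/\rho\right) + \varphi'(2\rho' + p\rho) = -\left(\tfrac14 p^2 + \tfrac12 p'\right)\rho\varphi = q\rho\varphi. \]
This is the same cancellation of the $\varphi'$ terms as in the Elimination of Damping lemma. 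Since $p, p' \to 0$, we have $q \to 0$, so this forcing is $o(\rho)$. Since $|h(t)| \le Ce^{-\alpha t}$ for any $\alpha < \omega^2$, the preliminary lemma gives $h * (q\rho\varphi) = o(\rho)$.

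\textbf{The remainder $x_1$.} We repeat the integrations by parts from the proof of \eqref{eq:y2_in_x}, using $h(0) = 0$ and $h'(0) = 1$. The difference now is that $x_1(0)$ and $x_1'(0)$ need not vanish. The result is
\[ h * (x_1'' + p x_1' + \omega^2 x_1) = x_1(t) + \int_0^t K_3(t,s)\,x_1(s)\,ds + B(t), \]
where $B(t)$ is a combination of $h(t)x_1'(0)$, $h'(t)x_1(0)$ and $h(t)p(0)x_1(0)$. Hence $B$ decays exponentially and is $o(\rho)$. The term $x_1$ is $o(\rho)$ by hypothesis.

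For the integral, we have
\[ |K_3(t,s)| \le |h'' + \omega^2 h|(t-s) + \sup p\,|h'(t-s)| + \sup|p'|\,|h(t-s)|. \]
Here $p$ is bounded by $p(0)$, and $p'$ is bounded because $p \in C^1$ and $p' \to 0$. So $|K_3(t,s)| \le C e^{-\alpha(t-s)}$ with $\alpha < \omega^2$. This is not a pure convolution kernel, but the proof of the preliminary lemma only uses the bound $|k(t-s)| \le Ce^{-\alpha(t-s)}$, so it applies verbatim. Hence this integral is $o(\rho)$.

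Summing the pieces gives $y_2 = o(\rho)$. The main obstacle is the bookkeeping in the $x_1$ step: tracking the nonzero initial data of $x_0$ and $x_1$ through the integrations by parts, and checking that the resulting boundary terms, together with the $s \in [0,T]$ portion of each convolution, are beaten by the merely subexponential decay of $\rho$. Handling $x_0$ through its exact forcing $q\rho\varphi$, rather than pushing it through $K_3$, is what avoids any need for oscillatory cancellation.
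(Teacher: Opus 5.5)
Your proof is correct, and it takes a somewhat different route from the paper's, one that is slightly more general. The paper starts from the representation \eqref{eq:y2_in_x}, $y_2 = x + \int_0^t K_3(t,s)x(s)\,ds$, which was derived under $x(0)=x'(0)=0$. It then proceeds in three pieces. First, it disposes of $h'*(px)$ and $h*(p'x)$ for the whole of $x$, using $x=O(\rho)$ and $p,p'\to 0$. Second, it applies Lemma~\ref{lem:subexp_conv} to the constant-coefficient terms acting on $x_1$. Third, for $x_0+h''*x_0+\omega^2 h*x_0$, it integrates by parts back to exponentially small transients plus $h*(x_0''+\omega^2x_0)$. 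That last convolution is controlled through $\rho'=o(\rho)$, $\rho''=o(\rho)$ and a separate splitting of the integral at $t-M$. You never pass $x_0$ through the $K_3$ representation. Instead you convolve $h$ with the exact forcing that $x_0$ generates, which the damping-elimination identity shows is $q\rho\varphi$. You integrate by parts only on $x_1$, and you keep the boundary terms $h(t)x_1'(0)$, $h'(t)x_1(0)$, $h(t)p(0)x_1(0)$ explicitly. This buys three things. The conclusion holds without zero initial data, which the paper's proof implicitly inherits from \eqref{eq:y2_in_x}. A single convolution lemma handles every term, including the non-convolution kernel $K_3$, and you actually prove its $o(\rho)$ version; the paper states Lemma~\ref{lem:subexp_conv} only for $O(\rho)$ but uses it for $o(\rho)$. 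Finally, the leading term's contribution is identified exactly as $h*(q\rho\varphi)$ rather than estimated piece by piece. The paper's route has the advantage of reusing the already established representation of $y_2$ in terms of $x$. One small correction: since $f$ is only assumed $L^1_{\mathrm{loc}}$, $x$ and hence $x_1$ need not be $C^2$. However, $x_1'$ is absolutely continuous, and that is all your integrations by parts require.
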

	
	\begin{proof}
		Recall the representation of $y_2(t)$ in terms of $x(t)$ using the exponentially decaying kernel $h(t)$ and its derivatives:
		\[ y_2(t) = x(t) + (h'' * x)(t) + \omega^2(h * x)(t) + (h' * px)(t) - (h * p'x)(t). \]
		By hypothesis, $h$, $h'$, and $h''$ are all bounded by $C e^{-\alpha t}$ for some constants $C, \alpha > 0$. We will analyze the terms of $y_2(t)$ in three distinct parts.
		
		Because $x_1(t) = o(\rho(t))$, Lemma \ref{lem:subexp_conv} immediately implies that convolutions of $x_1$ with any exponentially decaying kernel remain $o(\rho(t))$. Thus, $x_1 + h'' * x_1 + \omega^2 h * x_1 = o(\rho(t))$.
		
		Next, consider the terms involving $p(t)$ and $p'(t)$. We know $x(t) = O(\rho(t))$. Since $p(t) \to 0$ and $p'(t) \to 0$, the products are strictly smaller than $\rho(t)$:
		\[ p(t)x(t) = o(\rho(t)) \quad \text{and} \quad p'(t)x(t) = o(\rho(t)). \]
		Applying Lemma \ref{lem:subexp_conv} again, the convolutions $(h' * px)(t)$ and $(h * p'x)(t)$ are strictly $o(\rho(t))$. 
		
		Therefore, all that remains is to evaluate the linear leading-order operator $L$ applied to the main oscillatory term $x_0(t)$:
		\[ L[x_0](t) = x_0(t) + (h'' * x_0)(t) + \omega^2 (h * x_0)(t). \]	
		Now, we recall that  $\rho(t) = \exp\left(-\frac{1}{2}\int_0^t p(s)\,ds\right)$, where $\lim_{t \to \infty} p(t) = 0$ and $\lim_{t \to \infty} p'(t) = 0$, and  $x_0(t) = \rho(t)\Theta(t)$, where $\Theta(t) = c_1 \cos(\omega t) + c_2 \sin(\omega t)$. Recall also the definition 
	   $h(t) = t e^{-\omega^2 t}$ for some $\omega > 0$. We must show that 
\[ L[x_0](t)=: l(t) := x_0(t) + (h'' * x_0)(t) + \omega^2 (h * x_0)(t).
\]
is such that $l(t)=o(\rho(t))$. 
First, note that 			
$\rho'(t) = o(\rho(t))$, $\rho''(t) = o(\rho(t))$ as $t \to \infty$. By definition, we have 
\[ \rho'(t) = 
-\frac{1}{2}p(t)\rho(t), \]
so, as $p(t)\to 0$ as $t\to\infty$, $\rho'(t)=o(\rho(t))$. 
Differentiating again we get 	
			\begin{align*}
				\rho''(t) &= -\frac{1}{2}p'(t)\rho(t) - \frac{1}{2}p(t)\rho'(t) = \rho(t) \left( \frac{1}{4}p(t)^2 - \frac{1}{2}p'(t) \right).
			\end{align*}
			Since $\lim_{t \to \infty} p(t) = 0$ and $\lim_{t \to \infty} p'(t) = 0$, we have  $\rho''(t) = o(\rho(t))$. 
			
Next, we analyse the $x_0$ dependent terms in $l$. 
			Notice that $\Theta(t)$ obeys $\Theta''(t) + \omega^2 \Theta(t) = 0$. Then 
			\[ x_0'(t) = \rho'(t)\Theta(t) + \rho(t)\Theta'(t), \]
			and 
			\[ x_0''(t) = \rho''(t)\Theta(t) + 2\rho'(t)\Theta'(t) + \rho(t)\Theta''(t) \]
			Substituting $\Theta''(t) = -\omega^2 \Theta(t)$ into the second derivative, we get 
			\[ x_0''(t) + \omega^2 x_0(t) = \rho''(t)\Theta(t) + 2\rho'(t)\Theta'(t). \]
			Since $\Theta(t)$ and $\Theta'(t)$ are bounded trigonometric functions, and we have proven $\rho' = o(\rho)$ and $\rho'' = o(\rho)$, it immediately follows that:
			\[ x_0''(t) + \omega^2 x_0(t) = o(\rho(t)). \]
			Let us define $g(t) := x_0''(t) + \omega^2 x_0(t)$. We have established $g(t) = o(\rho(t))$.
				
			We are given $h(t) = t e^{-\omega^2 t}$. Then $h(0)=0$, $h'(0)=1$. We now examine the convolution $(h'' * x_0)(t) = \int_0^t h''(t-s)x_0(s)\,ds$. We apply integration by parts twice to shift the derivatives from $h$ to $x_0$. 
			First integration by parts:
			\[ \int_0^t h''(t-s)x_0(s)\,ds = \Big[ -h'(t-s)x_0(s) \Big]_{s=0}^{s=t} + \int_0^t h'(t-s)x_0'(s)\,ds \]
			\[ = -h'(0)x_0(t) + h'(t)x_0(0) + \int_0^t h'(t-s)x_0'(s)\,ds. \]
			Second integration by parts on the remaining integral:
			\[ \int_0^t h'(t-s)x_0'(s)\,ds = \Big[ -h(t-s)x_0'(s) \Big]_{s=0}^{s=t} + \int_0^t h(t-s)x_0''(s)\,ds \]
			\[ = -h(0)x_0'(t) + h(t)x_0'(0) + (h * x_0'')(t). \]
			Combining these and applying the boundary values $h(0) = 0$ and $h'(0) = 1$:
			\[ (h'' * x_0)(t) = -x_0(t) + h'(t)x_0(0) + h(t)x_0'(0) + (h * x_0'')(t). \]
			Now, substitute this expanded form back into the definition of $l(t)$:
			\begin{align*}
				l(t) &= x_0(t) + \Big( -x_0(t) + h'(t)x_0(0) + h(t)x_0'(0) + (h * x_0'')(t) \Big) + \omega^2 (h * x_0)(t) \\
				&= h'(t)x_0(0) + h(t)x_0'(0)T(t) + \Big( h * (x_0'' + \omega^2 x_0) \Big)(t) \\
				&=: T(t) + (h * g)(t).
			\end{align*}
			
			We must show that both $T(t)$ and $(h * g)(t)$ are $o(\rho(t))$. 
			
			\textbf{1. The Transients:} 
			$T(t) = (1-\omega^2 t)e^{-\omega^2 t}x_0(0) + t e^{-\omega^2 t}x_0'(0)$. This decays exponentially. Since $p(t) \to 0$, the integral $\int_0^t p(s)\,ds$ grows slower than any linear function $\epsilon t$, meaning $\rho(t)$ decays at most sub-exponentially. Therefore, the exponentially decaying $T(t)$ is trivially $o(\rho(t))$.
			
			\textbf{2. The Convolution:}
			We must evaluate $\lim_{t \to \infty} \frac{1}{\rho(t)} \int_0^t h(s) g(t-s) \, ds$.
			Because $g(t) = o(\rho(t))$, for any $\epsilon > 0$, there exists an $M > 0$ such that for all $\tau > M$, $|g(\tau)| < \epsilon \rho(\tau)$. Furthermore, $g(t)$ is globally bounded by $C \rho(t)$ for some constant $C$.
			We split the integral at $t-M$:
			\[ \frac{1}{\rho(t)} \int_0^t |h(s)| |g(t-s)| \, ds = \int_0^{t-M} |h(s)| \frac{|g(t-s)|}{\rho(t)} \, ds + \int_{t-M}^t |h(s)| \frac{|g(t-s)|}{\rho(t)} \, ds. \]
			
			In the first integral ($s \in [0, t-M]$), we have $t-s \ge M$, so $|g(t-s)| < \epsilon \rho(t-s)$. Note that:
			\[ \frac{\rho(t-s)}{\rho(t)} = \exp\left( \frac{1}{2}\int_{t-s}^t p(\tau)\,d\tau \right). \]
			Since $p(t) \to 0$, $p(t)$ is bounded by some small constant $P \ll \omega^2$. Thus, the ratio $\rho(t-s)/\rho(t)$ is bounded by $e^{Ps/2}$. Because $h(s) = s e^{-\omega^2 s}$ decays much faster than $e^{Ps/2}$ grows, the product $|h(s)|\frac{\rho(t-s)}{\rho(t)}$ is entirely absolutely integrable on $[0, \infty)$. Let this integral be bounded by a constant $K$. The first integral is thus strictly bounded by $\epsilon K$.
			
			For the second integral ($s \in [t-M, t]$), $h(s)$ is evaluated at values tending to infinity. Since $h(s)$ decays exponentially, and the integration interval $[t-M, t]$ has fixed length $M$, this integral vanishes completely as $t \to \infty$.
			
			Since $\epsilon$ was arbitrary, the entire convolution is bounded by $0$ in the limit. Thus, $(h * g)(t) = o(\rho(t))$.
			
			Summing the asymptotic bounds, $l(t) = T(t) + (h * g)(t) = o(\rho(t)) + o(\rho(t)) = o(\rho(t))$, completing the proof.
	\end{proof}
	
	\section{Representation of the Solution; Bounds on Kernels}
	
	From Theorem 11.1, the exact representation of $x(t)$ in terms of $y_2(t)$ is given by the integral equation:
	\[ x(t) = y_2(t) + \int_0^t K_4(t, s) y_2(s) \, ds, \]
	where the exact kernel is defined as $$K_4(t, s) = \omega^2 K_1(t, s) - \frac{\partial K_1(t, s)}{\partial s}.$$ 
	We have that $K_1(t, s) = E(t, s)L_1(t, s) + P_1(t, s)$, where $L_1(t, s) = \omega \sin(\omega(t-s)) + \cos(\omega(t-s))$. Furthermore, we have the decomposition $K_4(t, s) = E(t, s) L_3(t, s) + P_4(t, s)$, where the remainder kernel $P_4(t, s)$ is given by:
	\[ P_4(t, s) = - \frac{1}{2}p(s)E(t, s)L_1(t, s) + \omega^2 P_1(t, s) - \frac{\partial P_1(t, s)}{\partial s},\]
	and $L_3$ is given by 
		\[ L_3(t, s) = (\omega^3 - \omega) \sin(\omega(t-s)) + 2\omega^2 \cos(\omega(t-s)). \]
	Consequently 
	\begin{multline} \label{eq.decompxWDy2}
	 x(t) = y_2(t) + \int_0^t K_4(t, s) y_2(s) \, ds 
	 \\ = y_2(t)+\int_0^t E(t, s) L_3(t, s) y_2(s)\,ds + \int_0^t P_4(t,s)y_2(s)\,ds=: y_2(t)+W(t)+D(t).
	\end{multline}
	We will assume that $y_2(t)=o(\rho(t))$. We start by showing that the leading integral $W(t)/\rho(t)$ has sinusoidal behaviour, provided 
		\begin{equation} \label{eq:integral_limits}
		\lim_{t \to \infty} \int_0^t \cos(\omega s) A(s) y_2(s) \, ds = I_c \quad \text{and} \quad \lim_{t \to \infty} \int_0^t \sin(\omega s) A(s) y_2(s) \, ds = I_s.
	\end{equation}
	Then we will determine the asymptotic behaviour of the remainder term $D$.
	\begin{theorem}
	Suppose $y_2(t)=o(\rho(t))$ and \eqref{eq:integral_limits} holds. Then 
	\[
	W(t)=\int_0^t E(t, s) L_3(t, s) y_2(s)\,ds
	\]
	obeys 
	\[
	W(t)=\rho(t)(c_1 \sin(\omega t) + c_2 \sin(\omega t))+o(\rho(t)), \quad t\to\infty,
	\]
	where 
	\[
	c_1 = (\omega^3 - \omega)I_c + 2\omega^2 I_s, \quad c_2 = 2\omega^2 I_c - (\omega^3 - \omega)I_s.
	\]
	\end{theorem}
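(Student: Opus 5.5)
The plan is a direct computation. No limiting argument is needed beyond the convergence assumed in \eqref{eq:integral_limits}. First, use $E(t,s)=A(s)/A(t)=\rho(t)A(s)$ to pull the factor $\rho(t)$ out of the integral. Then expand $L_3(t,s)$ with the angle-difference formulas, exactly as in the proof of Theorem \ref{thm:unified_x_only}:
\[
L_3(t,s)=C_1(t)\cos(\omega s)+C_2(t)\sin(\omega s),
\]
where
\[
C_1(t)=(\omega^3-\omega)\sin(\omega t)+2\omega^2\cos(\omega t),\qquad C_2(t)=-(\omega^3-\omega)\cos(\omega t)+2\omega^2\sin(\omega t).
\]
Since $C_1,C_2$ do not depend on $s$, this gives the exact identity
\[
W(t)=\rho(t)\Big[C_1(t)\int_0^t\cos(\omega s)A(s)y_2(s)\,ds+C_2(t)\int_0^t\sin(\omega s)A(s)y_2(s)\,ds\Big].
\]

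Next, write each integral as its limit plus a remainder. By \eqref{eq:integral_limits},
\[
\int_0^t\cos(\omega s)A(s)y_2(s)\,ds=I_c+\varepsilon_c(t),\qquad \int_0^t\sin(\omega s)A(s)y_2(s)\,ds=I_s+\varepsilon_s(t),
\]
with $\varepsilon_c(t),\varepsilon_s(t)\to0$. The functions $C_1,C_2$ are bounded by $|\omega^3-\omega|+2\omega^2$. Hence $\rho(t)\big(C_1(t)\varepsilon_c(t)+C_2(t)\varepsilon_s(t)\big)=o(\rho(t))$, and
\[
W(t)=\rho(t)\big(C_1(t)I_c+C_2(t)I_s\big)+o(\rho(t)).
\]

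Finally, collect the coefficients of $\sin(\omega t)$ and $\cos(\omega t)$:
\[
C_1I_c+C_2I_s=\big[(\omega^3-\omega)I_c+2\omega^2I_s\big]\sin(\omega t)+\big[2\omega^2I_c-(\omega^3-\omega)I_s\big]\cos(\omega t)=c_1\sin(\omega t)+c_2\cos(\omega t).
\]
This is the claimed form, with the stated $c_1,c_2$.

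There is no genuine obstacle. The only delicate point is bookkeeping: the second term of the statement must be read as $c_2\cos(\omega t)$, since "$c_2\sin(\omega t)$" is evidently a typo given the stated $c_2$. The hypothesis $y_2=o(\rho)$ is not used for $W$ itself. It will matter only for the companion terms $y_2(t)$ and $D(t)$ in \eqref{eq.decompxWDy2}.
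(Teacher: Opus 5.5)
Your proposal is correct and follows the paper's proof essentially step for step: factor out $1/A(t)=\rho(t)$, split $L_3$ via the angle-difference formulas into $C_1(t)\cos(\omega s)+C_2(t)\sin(\omega s)$, replace the two integrals by $I_c+o(1)$ and $I_s+o(1)$ using the boundedness of $C_1,C_2$, and regroup. Your reading of the second term as $c_2\cos(\omega t)$ matches what the paper's proof actually concludes, and you are right that the hypothesis $y_2=o(\rho)$ is not used in this step.
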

	\begin{proof}
		We substitute $E(t, s) = \frac{A(s)}{A(t)}$ and pull the $t$-dependent term $\frac{1}{A(t)}$ outside the integral:
	\[ W(t) = \frac{1}{A(t)} \int_0^t L_3(t, s) A(s) y_2(s) \, ds. \]
	The kernel is defined as $L_3(t, s) = (\omega^3 - \omega) \sin(\omega(t-s)) + 2\omega^2 \cos(\omega(t-s))$. To isolate the integration variable $s$, we apply the trigonometric angle-difference identities:
	\begin{align*}
		\sin(\omega(t-s)) &= \sin(\omega t)\cos(\omega s) - \cos(\omega t)\sin(\omega s), \\
		\cos(\omega(t-s)) &= \cos(\omega t)\cos(\omega s) + \sin(\omega t)\sin(\omega s).
	\end{align*}
	Substituting these into $L_3(t, s)$ and grouping the terms by $\cos(\omega s)$ and $\sin(\omega s)$ yields:
	\begin{align*}
		L_3(t, s) &= \left[ (\omega^3 - \omega)\sin(\omega t) + 2\omega^2\cos(\omega t) \right] \cos(\omega s) \\
		&\quad + \left[ 2\omega^2\sin(\omega t) - (\omega^3 - \omega)\cos(\omega t) \right] \sin(\omega s).
	\end{align*}
	Let us define the purely $t$-dependent coefficient functions:
	\begin{align*}
		C_1(t) &= (\omega^3 - \omega)\sin(\omega t) + 2\omega^2\cos(\omega t), \\
		C_2(t) &= 2\omega^2\sin(\omega t) - (\omega^3 - \omega)\cos(\omega t).
	\end{align*}
	This allows us to write the leading integral as a coupled sum:
	\begin{equation} \label{eq:Wt_expanded}
		W(t) = \frac{C_1(t)}{A(t)} \int_0^t \cos(\omega s) A(s) y_2(s) \, ds + \frac{C_2(t)}{A(t)} \int_0^t \sin(\omega s) A(s) y_2(s) \, ds.
	\end{equation}
	By assumption, the integrals converge to finite limits $I_c$ and $I_s$ as $t \to \infty$. Therefore, as the improper integral converges, we have that the cosine integral is $I_c + o(1)$ as $t\to\infty$. Similarly, the sine integral is $I_s + o(1)$.
			Substituting these into Equation \eqref{eq:Wt_expanded}:
	\[ W(t) = \frac{C_1(t)}{A(t)} [I_c + o(1)] + \frac{C_2(t)}{A(t)} [I_s + o(1)]. \]
	Because $C_1(t)$ and $C_2(t)$ are continuous and composed of sine and cosine functions, they are uniformly globally bounded.  Therefore:
	\[ W(t) = \frac{C_1(t) I_c + C_2(t) I_s}{A(t)} + o\left(\frac{1}{A(t)}\right). \]
	Using the definition of $C_1$ and $C_2$, the numerator in the first term becomes:
	\begin{align*}
		C_1(t) I_c + C_2(t) I_s &= [(\omega^3 - \omega)\sin(\omega t) + 2\omega^2\cos(\omega t)] I_c + [2\omega^2\sin(\omega t) - (\omega^3 - \omega)\cos(\omega t)] I_s \\
		&= \sin(\omega t) \left[ (\omega^3 - \omega)I_c + 2\omega^2 I_s \right] + \cos(\omega t) \left[ 2\omega^2 I_c - (\omega^3 - \omega)I_s \right].
	\end{align*}
	By defining the trajectory constants as:
	\begin{align*}
		c_1 &= (\omega^3 - \omega)I_c + 2\omega^2 I_s, \\
		c_2 &= 2\omega^2 I_c - (\omega^3 - \omega)I_s,
	\end{align*}
	we immediately recover $$W(t) = \frac{c_1 \sin(\omega t) + c_2 \cos(\omega t)}{A(t)} + o(1/A(t)),$$
	as required.
	\end{proof}

	\subsection{Decomposition of the Kernel $P_4(t, s)$}
	
	We now establish that while the remainder kernel $P_4(t, s)$ contains a slowly decaying $O(p(s))$ term, its action inside the integral is rigorously governed by the absolutely integrable residual potential $Q(s) = \left| -\frac{1}{2}p'(s) - \frac{1}{4}p^2(s) \right|$. 
	
	\begin{lemma} \label{lem:P4_exact_bound}
		Let $K_4(t, s) = \omega^2 K_1(t, s) - \frac{\partial K_1(t, s)}{\partial s}$ with the decomposition $K_1(t, s) = E(t, s)L_1(t, s) + P_1(t, s)$, where $L_1(t, s) = \omega \sin(\omega(t-s)) + \cos(\omega(t-s))$. Then with $P_4$ defined by 
		\[ P_4(t, s) = -\frac{1}{2}p(s)E(t, s)L_1(t, s) + \omega^2 P_1(t, s) - \frac{\partial P_1(t, s)}{\partial s}, \]
		we have the decomposition 
				\[ P_4(t, s) = P_{4, Q}(t, s) - \frac{\partial}{\partial s} \left( \frac{1}{2}p(s) E(t, s) F(t, s) \right), \]
		where $F(t, s) = \cos(\omega(t-s)) - \frac{1}{\omega} \sin(\omega(t-s))$,
		and 
		\[
		 P_{4, Q}(t, s)=-q(s) E(t, s) F(t, s) + \omega^2 P_1(t, s) - \frac{\partial P_1(t, s)}{\partial s}.
		\]
	\end{lemma}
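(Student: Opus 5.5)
The decomposition is a purely algebraic identity. Comparing the definitions of $P_4$ and $P_{4,Q}$, the terms $\omega^2 P_1(t,s) - \partial_s P_1(t,s)$ appear identically on both sides and cancel. It is therefore enough to prove the pointwise identity
\[
-\tfrac{1}{2}p(s)E(t,s)L_1(t,s) \;=\; -q(s)E(t,s)F(t,s) \;-\; \frac{\partial}{\partial s}\Big(\tfrac12 p(s)E(t,s)F(t,s)\Big), \qquad t\ge s\ge 0 .
\]
This holds because $F$ is chosen so that its $s$-derivative reproduces $L_1$.

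First I record the elementary derivatives, assuming $p\in C^1$ as in the hypotheses. From $E(t,s)=\exp(-\tfrac12\int_s^t p)$ we get $\partial_s E(t,s)=\tfrac12 p(s)E(t,s)$, as already used in Lemma \ref{lem:uniform_bounds}. Differentiating $F(t,s)=\cos(\omega(t-s))-\frac1\omega\sin(\omega(t-s))$ in $s$ gives
\[
\partial_s F(t,s)=\omega\sin(\omega(t-s))+\cos(\omega(t-s))=L_1(t,s).
\]

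Next I expand the total derivative by the product rule:
\[
\frac{\partial}{\partial s}\Big(\tfrac12 pEF\Big)=\tfrac12 p'(s)EF+\tfrac12 p(s)\cdot\tfrac12 p(s)EF+\tfrac12 p(s)E\,L_1
=\Big(\tfrac12 p'(s)+\tfrac14 p(s)^2\Big)EF+\tfrac12 p(s)EL_1 .
\]
By the definition $q=-\tfrac14p^2-\tfrac12p'$ from the Elimination of Damping lemma, the first coefficient equals $-q(s)$. Hence $\partial_s(\tfrac12 pEF)=-qEF+\tfrac12 pEL_1$. Rearranging gives exactly the displayed identity, and adding back $\omega^2P_1-\partial_sP_1$ yields the stated decomposition of $P_4$.

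There is no real obstacle here. The only points needing care are the sign conventions: the sign of $\partial_s E$, which is positive because $s$ is the lower limit, and the sign of $q$. A careless sign would spoil the cancellation. I would also remark that the point of the lemma is that the non-integrable weight $p(s)E(t,s)$ has been traded for the integrable weight $|q(s)|E(t,s)$ plus an exact $s$-derivative. That derivative can later be integrated by parts against $y_2$.
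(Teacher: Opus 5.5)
Your proposal is correct and follows essentially the same route as the paper. You use $\partial_s F = L_1$ and $\partial_s E = \tfrac12 p(s)E$, expand $\partial_s(\tfrac12 pEF)$ by the product rule, identify the coefficient $\tfrac12 p' + \tfrac14 p^2 = -q$, and then rearrange while keeping the common terms $\omega^2 P_1 - \partial_s P_1$ on both sides.
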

	\begin{proof}
		We isolate the problematic term $-\frac{1}{2}p(s)E(t, s)L_1(t, s)$. We seek a function $F(t, s)$ such that $\frac{\partial F}{\partial s} = L_1(t, s)$. 
		Integrating $L_1(t, s)$ with respect to $s$ yields:
		\[ F(t, s) = \cos(\omega(t-s)) - \frac{1}{\omega} \sin(\omega(t-s)). \]
		We may therefore rewrite the $p(s)$ term using $\frac{\partial F}{\partial s}$:
		\[ -\frac{1}{2}p(s)E(t, s)L_1(t, s) = -\frac{1}{2}p(s)E(t, s) \frac{\partial F(t, s)}{\partial s}. \]
		We now apply the product rule to extract an exact derivative with respect to $s$:
		\[ \frac{\partial}{\partial s} \left( \frac{1}{2}p(s) E(t, s) F(t, s) \right) = \frac{\partial}{\partial s} \left( \frac{1}{2}p(s) E(t, s) \right) F(t, s) + \frac{1}{2}p(s) E(t, s) \frac{\partial F(t, s)}{\partial s}. \]
		Rearranging this gives an expression for our target term:
		\[ -\frac{1}{2}p(s)E(t, s)L_1(t, s) = \frac{\partial}{\partial s} \left( \frac{1}{2}p(s) E(t, s) \right) F(t, s) - \frac{\partial}{\partial s} \left( \frac{1}{2}p(s) E(t, s) F(t, s) \right). \]
		
		Next, we evaluate the derivative of the envelope term. Recall that $E(t, s) = \exp\left(-\frac{1}{2}\int_s^t p(u)\,du\right)$, so $\frac{\partial E}{\partial s} = \frac{1}{2}p(s)E(t, s)$. Applying the product rule:
		\begin{align*}
			\frac{\partial}{\partial s} \left( \frac{1}{2}p(s) E(t, s) \right) &= \frac{1}{2}p'(s) E(t, s) + \frac{1}{2}p(s) \left( \frac{1}{2}p(s) E(t, s) \right) \\
			&= \left( \frac{1}{2}p'(s) + \frac{1}{4}p^2(s) \right) E(t, s).
		\end{align*}
		Notice that the term in the parentheses is exactly $-q(s)$, where $q(s)$ is the residual potential. Let $Q(s) = |q(s)|$.
		Substituting this back into our expression for the $L_1$ term yields:
		\[ -\frac{1}{2}p(s)E(t, s)L_1(t, s) = -q(s) E(t, s) F(t, s) - \frac{\partial}{\partial s} \left( \frac{1}{2}p(s) E(t, s) F(t, s) \right). \]
		Finally, we substitute this decomposition back into the original algebraic definition of $P_4(t, s)$:
		\[ P_4(t, s) = \underbrace{ -q(s) E(t, s) F(t, s) + \omega^2 P_1(t, s) - \frac{\partial P_1(t, s)}{\partial s} }_{P_{4, Q}(t, s)} - \frac{\partial}{\partial s} \left( \frac{1}{2}p(s) E(t, s) F(t, s) \right),\]
		as claimed.
	\end{proof}
	
	\subsection{Asymptotic behaviour of the integral $P_4\ast y_2$}
	First, we establish the core definitions and assumptions used throughout the derivations:
	\begin{itemize}
		\item $A(t) = \exp\left(\frac{1}{2}\int_0^t p(s)\,ds\right)$
		\item $E(t,s) = \frac{A(s)}{A(t)} = \exp\left(-\frac{1}{2}\int_s^t p(\tau)\,d\tau\right)$
		\item $G(t, s) = \frac{\sin(\omega(t-s))}{\omega} - \frac{1}{\omega} \int_s^t \sin(\omega(t-\tau)) q(\tau) G(\tau, s) \, d\tau$
		\item $R(t,s) = E(t,s)G(t,s)$
		\item $L_1(t,s) = \omega \sin(\omega(t-s)) + \cos(\omega(t-s))$
		\item $K_1(t,s) = \omega^2 R(t,s) - \frac{\partial R(t,s)}{\partial s}$
		\item $P_1(t,s) = K_1(t,s) - E(t,s)L_1(t,s)$
		\item $F(t, s) = \cos(\omega(t-s)) - \frac{1}{\omega} \sin(\omega(t-s))$
		\item $P_{4,Q}(t,s) = -q(s)E(t,s)F(t,s) + \omega^2 P_1(t,s) - \frac{\partial P_1(t,s)}{\partial s}$
		\item $q(s) = -\frac{p'(s)}{2} - \frac{p^2(s)}{4}$ with $Q(s) = |q(s)| \in L^1(0, \infty)$
		\item $p(s) \to 0$ and $p'(s) \to 0$ as $s \to \infty$, but $p \notin L^1(0, \infty)$ (meaning $A(t) \to \infty$).
	\end{itemize}
	
	\vspace{0.5cm}
	
	\begin{lemma}[Estimates and Expansions for the Resolvent Kernel $G(t,s)$]
		The function $G$ and its partial derivative with respect to $s$ satisfy:
		\begin{enumerate}
			\item $G(t,s) = \frac{\sin(\omega(t-s))}{\omega} + \epsilon_1(t,s)$, where $|\epsilon_1(t,s)| \leq h_1\int_s^t Q(u)\,du$.
			\item $G_s(t,s) = -\cos(\omega(t-s)) + \epsilon_2(t,s)$, where $|\epsilon_2(t,s)| \leq h_2\int_s^t Q(u)\,du$.
			\item $G_{ss}(t,s) = -(\omega^2 + q(s))G(t,s)$.
		\end{enumerate}
		Furthermore, the remainders admit the asymptotic expansions:
		\[ \epsilon_1(t,s) = \cos(\omega t)\epsilon_{1,c}(s) + \sin(\omega t)\epsilon_{1,s}(s) + \tilde{\epsilon}_1(t,s) \]
		\[ \epsilon_2(t,s) = \cos(\omega t)\epsilon_{2,c}(s) + \sin(\omega t)\epsilon_{2,s}(s) + \tilde{\epsilon}_2(t,s) \]
		where $\epsilon_{i,c}(s)$ and $\epsilon_{i,s}(s)$ are bounded functions of $s$, and the new remainders depend only on the tail of $Q$, satisfying $|\tilde{\epsilon}_1(t,s)| \leq h_1 \int_t^\infty Q(u)\,du$ and $|\tilde{\epsilon}_2(t,s)| \leq h_2 \int_t^\infty Q(u)\,du$.
	\end{lemma}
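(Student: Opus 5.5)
The plan is to work directly from the Volterra equation in the list of definitions,
\[ G(t,s) = \frac{\sin(\omega(t-s))}{\omega} - \frac{1}{\omega}\int_s^t \sin(\omega(t-\tau))q(\tau)G(\tau,s)\,d\tau, \]
together with the a priori bound $|G(t,s)|\le \omega^{-1}e^{Q(0)/\omega}$ from the Gronwall step in Section 5. Here and below $Q(0)$ denotes the total integral $\int_0^\infty|q(u)|\,du$ from Section 5, not the pointwise $|q(0)|$.

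\textbf{Items (1) and (2).} Item (1) is immediate. Write $\epsilon_1(t,s) = -\omega^{-1}\int_s^t \sin(\omega(t-\tau))q(\tau)G(\tau,s)\,d\tau$ and bound it by $\omega^{-2}e^{Q(0)/\omega}\int_s^t Q(u)\,du$, which gives $h_1=\omega^{-2}e^{Q(0)/\omega}$.

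For item (2), differentiate the Volterra equation in $s$. The boundary term at $\tau=s$ is $\omega^{-1}\sin(\omega(t-s))q(s)G(s,s)$, and it vanishes because $G(s,s)=0$. This leaves
\[ G_s(t,s) = -\cos(\omega(t-s)) - \frac{1}{\omega}\int_s^t \sin(\omega(t-\tau))q(\tau)G_s(\tau,s)\,d\tau. \]
This is a Volterra equation for $G_s(\cdot,s)$ with forcing bounded by $1$. Gronwall then gives $|G_s|\le e^{Q(0)/\omega}$, so $\epsilon_2$ is bounded by $h_2\int_s^t Q$ with $h_2=\omega^{-1}e^{Q(0)/\omega}$.

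\textbf{Item (3).} As a function of $s$, $G(t,s)=u_1(s)u_2(t)-u_1(t)u_2(s)$ is a linear combination of solutions of $v''+(\omega^2+q)v=0$. Here $u_1,u_2$ are the fundamental solutions at the base point $0$, which generate the solutions based at any $s$ because the Wronskian is $1$. Hence $G_{ss}=-(\omega^2+q(s))G$ for almost every $s$. Strictly this needs $q$ continuous, and we have $q\in C^0$ only if $p\in C^1$, which is the case. Alternatively, item (3) can be read off by differentiating the $s$-Volterra equation once more.

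\textbf{The expansions.} For these, split $\int_s^t=\int_s^\infty-\int_t^\infty$ after expanding $\sin(\omega(t-\tau))=\sin\omega t\cos\omega\tau-\cos\omega t\sin\omega\tau$. To do this we need $G(\tau,s)$ defined for every $\tau\ge s$, which it is, with the uniform bound above. We then set
\[ \epsilon_{1,c}(s)=\frac{1}{\omega}\int_s^\infty \sin(\omega\tau)q(\tau)G(\tau,s)\,d\tau,\qquad \epsilon_{1,s}(s)=-\frac{1}{\omega}\int_s^\infty \cos(\omega\tau)q(\tau)G(\tau,s)\,d\tau. \]
Both converge absolutely and are bounded by $h_1 Q(0)$. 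The leftover is
\[ \tilde\epsilon_1(t,s)=\frac{1}{\omega}\int_t^\infty \sin(\omega(t-\tau))q(\tau)G(\tau,s)\,d\tau, \]
which is bounded by $h_1\int_t^\infty Q$. The same construction applied to the $G_s$ equation, with the bound on $G_s$, yields $\epsilon_{2,c}$, $\epsilon_{2,s}$ and $\tilde\epsilon_2$ with constant $h_2$.

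\textbf{Main obstacle.} The only delicate point is justifying the $s$-differentiation of the Volterra equation and the interchange of derivative and integral. I would handle it through the closed form $G=u_1(s)u_2(t)-u_1(t)u_2(s)$. That form is $C^1$ in $s$, with $u_i\in C^2$ since $q$ is continuous, and this makes the differentiated equation legitimate. Everything after that is routine Gronwall bounding.
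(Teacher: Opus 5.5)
Your proposal is correct and follows the paper's proof essentially step for step. It uses the uniform bound on $G$ for $\epsilon_1$, differentiates the Volterra equation in $s$ (using $G(s,s)=0$) and applies Gronwall for $\epsilon_2$, invokes the ODE satisfied by $G$ in the variable $s$ for item (3), and splits $\int_s^t=\int_s^\infty-\int_t^\infty$ after expanding $\sin(\omega(t-\tau))$ for the expansions. You also supply details the paper leaves implicit: the explicit constants $h_1=\omega^{-2}e^{Q(0)/\omega}$ and $h_2=\omega^{-1}e^{Q(0)/\omega}$, the handling of the clash between the two meanings of $Q$, and the justification of the $s$-differentiation via the closed form $u_1(s)u_2(t)-u_1(t)u_2(s)$. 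That closed form actually gives item (3) for every $s$, not just almost every $s$, since $q$ is continuous.
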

	
	\begin{proof}
		By standard Volterra integral equation theory, since $Q \in L^1$, there exists $M > 0$ such that $|G(t,s)| \leq M$ uniformly for $0 \leq s \leq t < \infty$. The remainder is defined by the integral:
		\[ \epsilon_1(t,s) = -\frac{1}{\omega} \int_s^t \sin(\omega(t-\tau)) q(\tau) G(\tau, s) \, d\tau \]
		Taking absolute values yields $|\epsilon_1(t,s)| \leq \frac{M}{\omega} \int_s^t Q(\tau) \, d\tau$. Setting $h_1 = M/\omega$ yields the first result.
		
		Differentiating the integral equation for $G(t,s)$ with respect to $s$, and noting that $G(s,s) = 0$, gives:
		\[ G_s(t,s) = -\cos(\omega(t-s)) - \frac{1}{\omega}\int_s^t \sin(\omega(t-\tau))q(\tau)G_s(\tau,s)\,d\tau \]
		By Gronwall's inequality, $|G_s(t,s)| \leq M_2$ for some constant $M_2$. The integral term is defined as $\epsilon_2(t,s)$, and bounding it yields $|\epsilon_2(t,s)| \leq \frac{M_2}{\omega} \int_s^t Q(\tau)\,d\tau$. Setting $h_2 = M_2/\omega$ proves the second result. The third result $G_{ss}(t,s) = -(\omega^2+q(s))G(t,s)$ follows from $G$ being the resolvent kernel associated with $-y'' + q(t)y = \omega^2 y$.
		
		To obtain the further expansions necessary for rigorous bounds as $t \to \infty$, we expand $\sin(\omega(t-\tau)) = \sin(\omega t)\cos(\omega \tau) - \cos(\omega t)\sin(\omega \tau)$ inside $\epsilon_1(t,s)$:
		\begin{align*}
			\epsilon_1(t,s) &= \cos(\omega t) \left( \frac{1}{\omega} \int_s^t \sin(\omega \tau) q(\tau) G(\tau, s) \, d\tau \right) - \sin(\omega t) \left( \frac{1}{\omega} \int_s^t \cos(\omega \tau) q(\tau) G(\tau, s) \, d\tau \right)
		\end{align*}
		Because $Q \in L^1$ and $G$ is bounded, we can extend the integrals to infinity and subtract the tail:
		\[ \epsilon_{1,c}(s) = \frac{1}{\omega} \int_s^\infty \sin(\omega \tau) q(\tau) G(\tau, s) \, d\tau, \quad \epsilon_{1,s}(s) = -\frac{1}{\omega} \int_s^\infty \cos(\omega \tau) q(\tau) G(\tau, s) \, d\tau \]
		The remaining tail is:
		\[ \tilde{\epsilon}_1(t,s) = -\frac{1}{\omega} \int_t^\infty \left[ \sin(\omega \tau)\cos(\omega t) - \cos(\omega \tau)\sin(\omega t) \right] q(\tau) G(\tau, s) \, d\tau = -\frac{1}{\omega} \int_t^\infty \sin(\omega(\tau-t)) q(\tau) G(\tau, s) \, d\tau \]
		Clearly, $|\tilde{\epsilon}_1(t,s)| \leq \frac{M}{\omega} \int_t^\infty Q(\tau) \, d\tau = h_1 \int_t^\infty Q(u)\,du$. An identical expansion for $\epsilon_2(t,s)$ using $G_s(\tau, s)$ yields $\epsilon_{2,c}(s)$, $\epsilon_{2,s}(s)$, and $\tilde{\epsilon}_2(t,s)$ bounded by $h_2 \int_t^\infty Q(u)\,du$.
	\end{proof}
	
	\vspace{0.5cm}
	
	\begin{lemma}[Representation of $P_{4,Q}(t,s)$]
		\[ P_{4,Q}(t,s)A(t) = A(s)\left[ a_1(s)\cos(\omega(t-s)) + a_2(s)\frac{\sin(\omega(t-s))}{\omega} + E_{\text{rem}}(t,s) \right] \]
		where $a_1(s) = -\frac{p(s)}{2} - q(s)$ and $a_2(s) = -\frac{\omega^2 p(s)}{2} - q(s)$, both tending to $0$ as $s \to \infty$. The remainder $E_{\text{rem}}(t,s) = b_1(s)\epsilon_2(t,s) + b_2(s)\epsilon_1(t,s)$ is composed of functions $b_1(s) = -2\omega^2 + p(s)$ and $b_2(s) = \left(\omega^2 - \frac{p(s)}{2}\right)^2 - \left(\omega^2 + q(s) - \frac{p'(s)}{2}\right)$ which are globally bounded. 
	\end{lemma}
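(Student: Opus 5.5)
The plan is a direct computation. I will express $K_1$, and hence $P_1$ and $\partial_s P_1$, through $G$, $G_s$ and $G_{ss}$. Then I substitute the expansions of the preceding lemma and collect coefficients. Since $R(t,s)=E(t,s)G(t,s)$ and $\partial_s E(t,s)=\frac12 p(s)E(t,s)$, we have $R_s=E\bigl(\frac12 p(s)G+G_s\bigr)$. Hence
\[ K_1(t,s)=E(t,s)\Bigl[\bigl(\omega^2-\tfrac12 p(s)\bigr)G(t,s)-G_s(t,s)\Bigr]. \]
Differentiating once more in $s$ and using $G_{ss}=-(\omega^2+q(s))G$ from item (3) of the previous lemma gives
\[ \omega^2K_1-\partial_sK_1=E\Bigl[\Bigl(\bigl(\omega^2-\tfrac{p}{2}\bigr)^2-\bigl(\omega^2+q-\tfrac{p'}{2}\bigr)\Bigr)G+\bigl(-2\omega^2+p\bigr)G_s\Bigr], \]
with all coefficients evaluated at $s$. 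The coefficient of $G$ is $b_2(s)$ and the coefficient of $G_s$ is $b_1(s)$, so this already identifies the functions in $E_{\text{rem}}$.

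Next I write $\omega^2P_1-\partial_sP_1=(\omega^2K_1-\partial_sK_1)-\bigl(\omega^2EL_1-\partial_s(EL_1)\bigr)$. The subtracted term is explicit:
\[ \partial_s(EL_1)=E\bigl(\tfrac12 pL_1+\partial_sL_1\bigr),\qquad \partial_sL_1=-\omega^2\cos(\omega(t-s))+\omega\sin(\omega(t-s)). \]
I then substitute $G=\frac{\sin(\omega(t-s))}{\omega}+\epsilon_1$ and $G_s=-\cos(\omega(t-s))+\epsilon_2$. The $\epsilon$-parts give exactly $E\,(b_1\epsilon_2+b_2\epsilon_1)=E\,E_{\text{rem}}$. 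The purely trigonometric parts come from three sources: $b_2\sin/\omega$, $-b_1\cos$, and the subtracted $\omega^2EL_1-\partial_s(EL_1)$. To these I add $-q(s)E F$ from the definition of $P_{4,Q}$. Expanding $b_2$, the $\omega^4$ terms cancel against those from $L_1$, and so do the pure $\omega^2$ terms. Only terms linear in $p,p^2,p',q$ survive. Using $q=-\frac{p'}{2}-\frac{p^2}{4}$ to eliminate $p'$, I expect the $\cos(\omega(t-s))$ coefficient to collapse to $a_1(s)$ and the $\sin(\omega(t-s))/\omega$ coefficient to collapse to $a_2(s)$. Finally I multiply through by $A(t)$ and use $E(t,s)A(t)=A(s)$ to obtain the stated form.

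The remaining claims are short. Boundedness of $b_1,b_2$ follows because $p$ is bounded by $p(0)$, $p'\to0$ with $p$ being $C^1$, and $|q|\le\frac14p^2+\frac12|p'|$. That $a_1,a_2\to0$ follows from $p,p'\to0$.

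The main obstacle is the trigonometric bookkeeping in this cancellation. Sign errors are easy when differentiating $\sin(\omega(t-s))$ and $\cos(\omega(t-s))$ in $s$. Moreover, the stated $a_1,a_2$ hide the relation between $p^2$, $p'$ and $q$. I will do that collection carefully, grouping first by powers of $\omega$ and then by $p$ versus $q$. If the constants do not match exactly as printed, I will record the correct $a_1,a_2$. These are in any case linear combinations of $p(s)$ and $q(s)$, so they tend to $0$, which is all that is used subsequently.
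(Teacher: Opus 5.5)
Your proposal is correct and is essentially the paper's direct computation, just reordered. You differentiate at the level of $G$, obtaining $\omega^2K_1-\partial_sK_1=E\,(b_2G+b_1G_s)$ via $G_{ss}=-(\omega^2+q)G$, and only then substitute the expansions; the paper instead first writes $P_1=EH$ in terms of $\epsilon_1,\epsilon_2$ and differentiates using $\partial_s\epsilon_1=\epsilon_2$ and $\partial_s\epsilon_2=-\frac{q}{\omega}\sin(\omega(t-s))-(\omega^2+q)\epsilon_1$. The collection comes out exactly as printed (with $b_2=\omega^4-\omega^2-\omega^2p-2q$ the trigonometric part is $a_1\cos(\omega(t-s))+a_2\sin(\omega(t-s))/\omega$), so no correction of $a_1,a_2$ is needed, and your boundedness argument rightly uses the subsection's standing assumption $p'\to0$.
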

	
	\begin{proof}
		First, we expand $P_1(t,s)$ explicitly by substituting the functions into $$P_1(t,s) = E(t,s) \left[ \left(\omega^2 - \frac{p(s)}{2}\right)G(t,s) - G_s(t,s) - L_1(t,s) \right].$$ Using the decompositions from Lemma 1, we have:
		\begin{align*}
			P_1(t,s) &= E(t,s) \bigg[ \left(\omega^2 - \frac{p(s)}{2}\right) \left( \frac{\sin(\omega(t-s))}{\omega} + \epsilon_1(t,s) \right) - \big( -\cos(\omega(t-s)) + \epsilon_2(t,s) \big) \\
			&\quad - \big( \omega \sin(\omega(t-s)) + \cos(\omega(t-s)) \big) \bigg] \\
			&= E(t,s) \bigg[ -\frac{p(s)}{2\omega}\sin(\omega(t-s)) + \left(\omega^2 - \frac{p(s)}{2}\right)\epsilon_1(t,s) - \epsilon_2(t,s) \bigg]
		\end{align*}
		Let us denote the term in the brackets as $H(t,s)$, so that $P_1(t,s) = E(t,s)H(t,s)$. To construct $P_{4,Q}(t,s)$, we require $\frac{\partial P_1(t,s)}{\partial s}$. Using the product rule and $\frac{\partial E(t,s)}{\partial s} = \frac{p(s)}{2}E(t,s)$, we obtain:
		\[ \frac{\partial P_1(t,s)}{\partial s} = \frac{p(s)}{2}E(t,s)H(t,s) + E(t,s)\frac{\partial H(t,s)}{\partial s} \]
		Thus, the expression for $P_{4,Q}(t,s)$ factors out $E(t,s)$ entirely:
		\begin{align*}
			P_{4,Q}(t,s) &= -q(s)E(t,s)F(t,s) + \omega^2 E(t,s)H(t,s) - \left( \frac{p(s)}{2}E(t,s)H(t,s) + E(t,s)\frac{\partial H(t,s)}{\partial s} \right) \\
			&= E(t,s) \left[ -q(s)F(t,s) + \left(\omega^2 - \frac{p(s)}{2}\right)H(t,s) - \frac{\partial H(t,s)}{\partial s} \right]
		\end{align*}
		Next, we systematically evaluate each of the three interior terms. The first term simply expands to:
		\[ -q(s)F(t,s) = -q(s)\cos(\omega(t-s)) + \frac{q(s)}{\omega}\sin(\omega(t-s)) \]
		The second term expands to:
		\begin{align*}
			\left(\omega^2 - \frac{p(s)}{2}\right)H(t,s) &= \left( -\frac{\omega p(s)}{2} + \frac{p^2(s)}{4\omega} \right)\sin(\omega(t-s)) \\
			&\quad + \left(\omega^2 - \frac{p(s)}{2}\right)^2 \epsilon_1(t,s) - \left(\omega^2 - \frac{p(s)}{2}\right)\epsilon_2(t,s)
		\end{align*}
		To compute $\frac{\partial H(t,s)}{\partial s}$ for the third term, we first need the partial derivatives of the remainders. From Lemma 1:
		\[ \frac{\partial \epsilon_1(t,s)}{\partial s} = G_s(t,s) + \cos(\omega(t-s)) = \epsilon_2(t,s) \]
		\[ \frac{\partial \epsilon_2(t,s)}{\partial s} = G_{ss}(t,s) + \omega\sin(\omega(t-s)) = -(\omega^2+q(s))G(t,s) + \omega\sin(\omega(t-s)) \]
		Substituting $G(t,s) = \frac{\sin(\omega(t-s))}{\omega} + \epsilon_1(t,s)$ into the second identity yields:
		\[ \frac{\partial \epsilon_2(t,s)}{\partial s} = -\frac{q(s)}{\omega}\sin(\omega(t-s)) - (\omega^2+q(s))\epsilon_1(t,s) \]
		Now, differentiating $H(t,s)$ with respect to $s$ provides:
		\begin{align*}
			\frac{\partial H(t,s)}{\partial s} &= -\frac{p'(s)}{2\omega}\sin(\omega(t-s)) + \frac{p(s)}{2}\cos(\omega(t-s)) - \frac{p'(s)}{2}\epsilon_1(t,s) \\
			&\quad + \left(\omega^2 - \frac{p(s)}{2}\right)\frac{\partial \epsilon_1(t,s)}{\partial s} - \frac{\partial \epsilon_2(t,s)}{\partial s} \\
			&= \left( -\frac{p'(s)}{2\omega} + \frac{q(s)}{\omega} \right)\sin(\omega(t-s)) + \frac{p(s)}{2}\cos(\omega(t-s)) \\
			&\quad + \left( \omega^2 + q(s) - \frac{p'(s)}{2} \right)\epsilon_1(t,s) + \left(\omega^2 - \frac{p(s)}{2}\right)\epsilon_2(t,s)
		\end{align*}
		Finally, we substitute these evaluated components back into the bracketed expression for $P_{4,Q}(t,s) / E(t,s)$ and group by terms. 
		
		For the $\cos(\omega(t-s))$ term:
		\[ -q(s) - \frac{p(s)}{2} = a_1(s) \]
		
		For the $\sin(\omega(t-s))$ term:
		\[ \frac{q(s)}{\omega} - \frac{\omega p(s)}{2} + \frac{p^2(s)}{4\omega} - \left( -\frac{p'(s)}{2\omega} + \frac{q(s)}{\omega} \right) = - \frac{\omega p(s)}{2} + \frac{p^2(s) + 2p'(s)}{4\omega} \]
		Recalling the definition $q(s) = -\frac{p'(s)}{2} - \frac{p^2(s)}{4}$, we recognize that $\frac{p^2(s) + 2p'(s)}{4\omega} = -\frac{q(s)}{\omega}$. Therefore, the coefficient simplifies to:
		\[ -\frac{\omega p(s)}{2} - \frac{q(s)}{\omega} = \frac{1}{\omega} \left( -\frac{\omega^2 p(s)}{2} - q(s) \right) = \frac{a_2(s)}{\omega} \]
		
		For the $\epsilon_1(t,s)$ term:
		\[ \left(\omega^2 - \frac{p(s)}{2}\right)^2 - \left(\omega^2 + q(s) - \frac{p'(s)}{2}\right) = b_2(s) \]
		
		For the $\epsilon_2(t,s)$ term:
		\[ -\left(\omega^2 - \frac{p(s)}{2}\right) - \left(\omega^2 - \frac{p(s)}{2}\right) = -2\omega^2 + p(s) = b_1(s) \]
		Multiplying the entire expression by $A(t)$ and using $A(t)E(t,s) = A(s)$ yields the final stated formula.
	\end{proof}
	
	\vspace{0.5cm}
	
	\begin{lemma}[Asymptotic Behavior of $I(t)$]
		Let $I^* = \int_0^\infty A(s)|y_2(s)|\,ds < \infty$. Then:
		\[ I(t) = \int_0^t P_{4,Q}(t,s)y_2(s)\,ds = \frac{c_1 \cos(\omega t) + c_2 \sin(\omega t)}{A(t)} + o\left(\frac{1}{A(t)}\right) \]
	\end{lemma}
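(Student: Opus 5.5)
Multiplying the representation of the preceding lemma by $y_2(s)$ and integrating, the plan is to write
\[
A(t)I(t)=\int_0^t A(s)y_2(s)\Big[a_1(s)\cos(\omega(t-s))+a_2(s)\tfrac{\sin(\omega(t-s))}{\omega}+b_1(s)\epsilon_2(t,s)+b_2(s)\epsilon_1(t,s)\Big]\,ds ,
\]
and to show that the right-hand side equals $c_1\cos(\omega t)+c_2\sin(\omega t)+o(1)$. The whole argument rests on the hypothesis $I^*<\infty$. The weight $A(s)|y_2(s)|$ is integrable on $[0,\infty)$, and every coefficient multiplying it is bounded uniformly in $t\ge s\ge0$: the functions $a_1,a_2,b_1,b_2$ are bounded because $p$, $p'$ and $q$ are, and $\epsilon_1,\epsilon_2$ are bounded by the $h_i\int_s^t Q$ estimates of the Green's function lemma. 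Each term therefore reduces to a convergent improper integral after the $t$-dependence has been separated off.

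\textbf{Trigonometric terms.} Expand $\cos(\omega(t-s))$ and $\sin(\omega(t-s))$ by the angle-difference formulas. The $a_1,a_2$ part then becomes
\[
\cos(\omega t)\int_0^t \phi_c(s)\,ds+\sin(\omega t)\int_0^t\phi_s(s)\,ds,
\qquad |\phi_c|,|\phi_s|\le C A(s)|y_2(s)| .
\]
By dominated convergence each integral converges to its integral over $[0,\infty)$, with tails $\int_t^\infty CA|y_2|\to0$.

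\textbf{Remainder terms.} For the $\epsilon$-terms, insert the further expansions
\[
\epsilon_i(t,s)=\cos(\omega t)\epsilon_{i,c}(s)+\sin(\omega t)\epsilon_{i,s}(s)+\tilde\epsilon_i(t,s).
\]
The $\cos(\omega t)$ and $\sin(\omega t)$ parts are handled exactly as above, since $\epsilon_{i,c}$ and $\epsilon_{i,s}$ are bounded. The part involving $\tilde\epsilon_i$ is at most
\[
C\Big(\int_t^\infty Q\Big)\int_0^t A|y_2|\le C I^*\int_t^\infty Q(u)\,du\to0,
\]
because $Q\in L^1$. Collecting the limits gives explicit constants
\[
c_1=\int_0^\infty A(s)y_2(s)\Big[a_1\cos\omega s-\tfrac{a_2}{\omega}\sin\omega s+b_1\epsilon_{2,c}+b_2\epsilon_{1,c}\Big]ds,
\]
with $c_2$ formed analogously. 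Dividing by $A(t)$ then yields the claim.

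\textbf{Main obstacle.} The difficulty is justifying the bounds used above. One needs uniform boundedness of $G_s$ and therefore of $\epsilon_2$; this is asserted in the Green's function lemma via Gronwall, so it can be quoted. One also needs $\epsilon_{i,c}(s)$ and $\epsilon_{i,s}(s)$ to be bounded uniformly in $s$, which follows from $|\epsilon_{1,c}(s)|\le (M/\omega)\int_s^\infty Q\le (M/\omega)\|Q\|_1$. A further subtlety is that $b_2$ contains $p'(s)$, which is bounded but not small. This is harmless, since only boundedness times the integrable weight $A|y_2|$ is used. I would check carefully that $p'$ is bounded on $[0,\infty)$. It is continuous, and $p'(s)\to0$ is among the standing assumptions listed before the lemma, so this holds, and no further smallness is needed.
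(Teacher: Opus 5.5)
Your proposal is correct and follows essentially the same route as the paper. Both arguments split $\cos(\omega(t-s))$ and $\sin(\omega(t-s))$ by the addition formulas, insert $\epsilon_i=\cos(\omega t)\epsilon_{i,c}(s)+\sin(\omega t)\epsilon_{i,s}(s)+\tilde\epsilon_i(t,s)$, use the bounded coefficients against the integrable weight $A|y_2|$ to get convergent integrals $C(t),S(t)$, and bound the $\tilde\epsilon$ contribution by $C I^*\int_t^\infty Q(u)\,du$. Your extra checks (boundedness of $\epsilon_{i,c},\epsilon_{i,s}$ by a multiple of $\|Q\|_1$, and of $p'$ via the standing assumption $p'\to 0$) only make explicit what the paper takes for granted.
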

	
	\begin{proof}
		We use the representation from the previous lemma. To isolate $t$, we apply the addition formulas $\cos(\omega(t-s)) = \cos(\omega t)\cos(\omega s) + \sin(\omega t)\sin(\omega s)$ and $\sin(\omega(t-s)) = \sin(\omega t)\cos(\omega s) - \cos(\omega t)\sin(\omega s)$. 
		Furthermore, we apply the expansion of $E_{\text{rem}}(t,s)$ from Lemma 1:
		\[ E_{\text{rem}}(t,s) = \cos(\omega t)E_c(s) + \sin(\omega t)E_s(s) + \tilde{E}_{\text{rem}}(t,s) \]
		where $E_c(s) = b_1(s)\epsilon_{2,c}(s) + b_2(s)\epsilon_{1,c}(s)$, $E_s(s) = b_1(s)\epsilon_{2,s}(s) + b_2(s)\epsilon_{1,s}(s)$, and the purely tail-dependent remainder is $\tilde{E}_{\text{rem}}(t,s) = b_1(s)\tilde{\epsilon}_2(t,s) + b_2(s)\tilde{\epsilon}_1(t,s)$. Because $b_1, b_2$ are bounded, $|\tilde{E}_{\text{rem}}(t,s)| \leq C \int_t^\infty Q(u)\,du$ for some constant $C$.
		
		Multiplying by $A(t)$, we can organize $A(t)I(t)$ entirely into components of $\cos(\omega t)$ and $\sin(\omega t)$:
		\[ A(t)I(t) = \cos(\omega t)C(t) + \sin(\omega t)S(t) + \tilde{R}_{\text{int}}(t) \]
		where we define:
		\[ C(t) = \int_0^t A(s)y_2(s)\left[ a_1(s)\cos(\omega s) - \frac{a_2(s)}{\omega}\sin(\omega s) + E_c(s) \right] ds \]
		\[ S(t) = \int_0^t A(s)y_2(s)\left[ a_1(s)\sin(\omega s) + \frac{a_2(s)}{\omega}\cos(\omega s) + E_s(s) \right] ds \]
		\[ \tilde{R}_{\text{int}}(t) = \int_0^t A(s)\tilde{E}_{\text{rem}}(t,s)y_2(s)\,ds \]
		
		Because the bracketed terms in $C(t)$ and $S(t)$ are globally bounded, and $A(s)|y_2(s)| \in L^1(0, \infty)$ (since $I^* < \infty$), the integrals $C(t)$ and $S(t)$ converge absolutely to limits $c_1$ and $c_2$ as $t \to \infty$. The remaining tails $\int_t^\infty$ are bounded by a constant times $\int_t^\infty A(s)|y_2(s)|\,ds$, which is $o(1)$. 
		
		We must rigorously show that $\tilde{R}_{\text{int}}(t) = o(1)$. Using our tight bound on $\tilde{E}_{\text{rem}}(t,s)$:
		\[ |\tilde{R}_{\text{int}}(t)| \leq \int_0^t A(s)|y_2(s)| |\tilde{E}_{\text{rem}}(t,s)| \, ds \leq \left( C \int_t^\infty Q(u)\,du \right) \int_0^t A(s)|y_2(s)| \, ds \leq C I^* \int_t^\infty Q(u)\,du \]
		Since $Q \in L^1(0, \infty)$, $\int_t^\infty Q(u)\,du \to 0$ as $t \to \infty$. Thus, $\tilde{R}_{\text{int}}(t) = o(1)$. Dividing $A(t)I(t)$ by $A(t)$ concludes the proof.
	\end{proof}
	
	\vspace{0.5cm}
	
	\begin{lemma}[Asymptotic Behavior of $J(t)$]
		Let $P_{4,B}(t,s) = - \frac{\partial}{\partial s} \left( \frac{1}{2}p(s) E(t, s) F(t, s) \right)$. Then $J(t) = \int_0^t P_{4,B}(t,s)y_2(s)\,ds$ satisfies:
		\[ J(t) = \frac{d_1 \cos(\omega t) + d_2 \sin(\omega t)}{A(t)} + o\left(\frac{1}{A(t)}\right) \]
	\end{lemma}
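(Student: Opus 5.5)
The plan is to integrate by parts in $s$ to remove the derivative from the kernel, and then handle the resulting terms with the same split into convergent integrals and $o(1)$ tails used for $I(t)$. The standing assumptions are those of the preceding lemma: $I^*=\int_0^\infty A(s)|y_2(s)|\,ds<\infty$ and $y_2(t)=o(1/A(t))$.

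\textbf{Step 1: integrate by parts.} Write $J(t)=-\int_0^t \partial_s\big(\tfrac12 p(s)E(t,s)F(t,s)\big)y_2(s)\,ds$. Using $y_2(0)=0$, $E(t,t)=1$ and $F(t,t)=1$,
\[ J(t) = -\tfrac12 p(t)y_2(t) + \int_0^t \tfrac12 p(s)E(t,s)F(t,s)\,y_2'(s)\,ds. \]
The boundary term satisfies $p(t)y_2(t)=o(1/A(t))$, since $p$ is bounded and $y_2=o(\rho)$.

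\textbf{Step 2: substitute for $y_2'$.} Insert $y_2'=-\omega^2y_2+y_1$ and use $E(t,s)=A(s)/A(t)$. This gives
\[ A(t)J(t)=o(1)-\tfrac{\omega^2}{2}\int_0^t p(s)A(s)F(t,s)y_2(s)\,ds+\tfrac12\int_0^t p(s)A(s)F(t,s)y_1(s)\,ds. \]
Expand $F(t,s)$ by the addition formulas into $\cos(\omega t)\,(\cdots)(s)+\sin(\omega t)\,(\cdots)(s)$ with bounded $s$-coefficients. The $y_2$-integral then converges absolutely because $p$ is bounded and $A y_2\in L^1$. Its tail is $o(1)$, so it contributes constants to $d_1$ and $d_2$.

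\textbf{Step 3: the $y_1$ term (main obstacle).} It is not clear that $pAy_1\in L^1$, so I would avoid $y_1$ and instead keep $J$ in its original form, expanding the derivative directly:
\[ \partial_s\big(\tfrac12 pEF\big)=-q(s)E F+\tfrac12 p(s)E(t,s)\,\partial_sF, \qquad \partial_sF=L_1. \]
The first piece is $-q(s)A(s)F(t,s)/A(t)$. Against $y_2$ it gives an absolutely convergent integral, since $|q|\le$ const and $Ay_2\in L^1$, and hence a contribution of the form $(\text{const}\cdot\cos+\text{const}\cdot\sin)/A(t)+o(1/A(t))$.

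The second piece is $\tfrac12 p(s)A(s)L_1(t,s)y_2(s)/A(t)$. Expanding $L_1$ by the addition formulas gives the same form, because $p$ is bounded and $Ay_2\in L^1$. So the direct expansion already suffices; the integration by parts of Steps 1 and 2 is unnecessary, and that route is only a fallback. Everything reduces to the fact that $p$ and $q$ are bounded, which holds since $p\le p(0)$ and $|p'|$ is bounded by the hypotheses $p\in C^1$ and $p'\to0$.

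\textbf{Step 4: collect constants.} With $\int_0^\infty$ denoting the limits, set
\[ d_1=\int_0^\infty A(s)y_2(s)\big[q(s)\cos(\omega s)-\ldots\big]\,ds, \]
and define $d_2$ analogously from the $\sin(\omega t)$ coefficients, collecting all terms from Step 3. The tails $\int_t^\infty$ are bounded by a constant times $\int_t^\infty A|y_2|\to0$. Dividing by $A(t)$ gives the claimed expansion of $J(t)$.
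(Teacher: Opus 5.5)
Your Step 3 is the paper's argument. The paper also uses $A'=\tfrac12 pA$, $A''=-qA$ and $F_s=L_1$ to write $-\partial_s\bigl(\tfrac12 p(s)A(s)F(t,s)\bigr)=A(s)\bigl[q(s)F(t,s)-\tfrac12 p(s)L_1(t,s)\bigr]$, separates $t$ and $s$ with the addition formulas, and concludes from the boundedness of $p,q$ and $\int_0^\infty A|y_2|<\infty$ that the $\cos(\omega t)$ and $\sin(\omega t)$ coefficients converge with $o(1)$ tails. Your proposal is therefore correct and essentially the same; you can drop the integration-by-parts detour in Steps 1--2, and with it the extra use of $y_2=o(1/A)$.
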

	
	\begin{proof}
		Factoring out $A(t)$ from $E(t,s)$, we have $P_{4,B}(t,s) = -\frac{1}{A(t)} \frac{\partial}{\partial s} \left( \frac{1}{2}p(s)A(s)F(t,s) \right)$. Because $\frac{1}{2}p(s)A(s) = A'(s)$ and $A''(s) = -q(s)A(s)$, expanding the derivative gives:
		\[ -\frac{\partial}{\partial s} (A'(s)F(t,s)) = -A''(s)F(t,s) - A'(s)F_s(t,s) = A(s) \left[ q(s)F(t,s) - \frac{1}{2}p(s)L_1(t,s) \right] \]
		where $F_s(t,s) = L_1(t,s) = \cos(\omega(t-s)) + \omega\sin(\omega(t-s))$ and $F(t,s) = \cos(\omega(t-s)) - \frac{1}{\omega}\sin(\omega(t-s))$. Grouping terms:
		\[ q(s)F(t,s) - \frac{1}{2}p(s)L_1(t,s) = \left( q(s) - \frac{p(s)}{2} \right)\cos(\omega(t-s)) - \left( \frac{q(s)}{\omega} + \frac{\omega p(s)}{2} \right)\sin(\omega(t-s)) \]
		We now apply the trigonometric addition formulas to separate $t$ and $s$, yielding:
		\begin{align*}
			q(s)F(t,s) - \frac{1}{2}p(s)L_1(t,s) &= \cos(\omega t) \underbrace{\left[ \left( q(s) - \frac{p(s)}{2} \right)\cos(\omega s) + \left( \frac{q(s)}{\omega} + \frac{\omega p(s)}{2} \right)\sin(\omega s) \right]}_{:= D_c(s)} \\
			&\quad + \sin(\omega t) \underbrace{\left[ \left( q(s) - \frac{p(s)}{2} \right)\sin(\omega s) - \left( \frac{q(s)}{\omega} + \frac{\omega p(s)}{2} \right)\cos(\omega s) \right]}_{:= D_s(s)}
		\end{align*}
		Therefore, $A(t)J(t) = \cos(\omega t) \int_0^t A(s)D_c(s)y_2(s)\,ds + \sin(\omega t) \int_0^t A(s)D_s(s)y_2(s)\,ds$.
		
		Since $p(s) \to 0$ and $q(s) \to 0$, the functions $D_c(s)$ and $D_s(s)$ are globally bounded by some constant $M_D$. Because $I^* < \infty$, the integrals converge absolutely:
		\[ d_1 = \int_0^\infty A(s)D_c(s)y_2(s)\,ds, \quad d_2 = \int_0^\infty A(s)D_s(s)y_2(s)\,ds \]
		The difference between the integral over $[0, t]$ and the limit is the tail $\int_t^\infty$, which satisfies:
		\[ \left| \int_t^\infty A(s)D_c(s)y_2(s)\,ds \right| \leq M_D \int_t^\infty A(s)|y_2(s)|\,ds \to 0 \quad \text{as } t \to \infty \]
		Thus, the tails are explicitly $o(1)$. Dividing $A(t)J(t) = d_1 \cos(\omega t) + d_2 \sin(\omega t) + o(1)$ by $A(t)$ completes the proof.
	\end{proof}
	
	\vspace{0.5cm}
	
	\begin{theorem}[Final Asymptotic Form of $P_4(t,s)$ Integration]
		\[ \int_0^t P_4(t,s)y_2(s)\,ds = \frac{e_1 \cos(\omega t) + e_2 \sin(\omega t)}{A(t)} + o\left(\frac{1}{A(t)}\right) \]
	\end{theorem}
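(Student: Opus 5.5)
The plan is to assemble the result directly from the decomposition of Lemma \ref{lem:P4_exact_bound} and the two preceding lemmas on $I(t)$ and $J(t)$. We work under the standing hypothesis of those lemmas, namely $I^*=\int_0^\infty A(s)|y_2(s)|\,ds<\infty$. Lemma \ref{lem:P4_exact_bound} gives the pointwise identity
\[ P_4(t,s)=P_{4,Q}(t,s)+P_{4,B}(t,s),\qquad P_{4,B}(t,s)=-\frac{\partial}{\partial s}\left(\tfrac12 p(s)E(t,s)F(t,s)\right). \]
Multiplying by $y_2(s)$ and integrating over $[0,t]$ then yields
\[ \int_0^t P_4(t,s)y_2(s)\,ds = I(t)+J(t). \]
This splitting is legitimate because both pieces are absolutely integrable on $[0,t]$. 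Indeed, $P_{4,Q}$ and $P_{4,B}$ are continuous in $s$ on the compact interval $[0,t]$, since $p\in C^1$ and $G,G_s$ are continuous, and $y_2$ is continuous.

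Next we insert the asymptotic forms of the two pieces. By the lemma on $I(t)$,
\[ I(t)=\frac{c_1\cos(\omega t)+c_2\sin(\omega t)}{A(t)}+o(1/A(t)). \]
By the lemma on $J(t)$,
\[ J(t)=\frac{d_1\cos(\omega t)+d_2\sin(\omega t)}{A(t)}+o(1/A(t)). \]
Adding the two expansions gives the claim with $e_1=c_1+d_1$ and $e_2=c_2+d_2$, because a sum of two $o(1/A(t))$ terms is again $o(1/A(t))$. To avoid a clash with the constants $c_1,c_2$ appearing in the theorem on $W(t)$, we record $e_1$ and $e_2$ explicitly as absolutely convergent integrals against $A(s)y_2(s)$. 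These are the limits of $C(t)+\int_0^t A D_c y_2\,ds$ and $S(t)+\int_0^t A D_s y_2\,ds$, respectively.

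The main obstacle is making sure the hypotheses of the $I$ and $J$ lemmas genuinely hold, since the statement above does not restate $I^*<\infty$. In particular, those lemmas rely on several bounds that need checking.
\begin{enumerate}
\item The coefficient functions $a_1,a_2,b_1,b_2,D_c,D_s$ must be bounded. Here $b_2$ contains $p'(s)$, which is bounded because $p'\to0$ and $p'$ is continuous.
\item The functions $\epsilon_{i,c},\epsilon_{i,s}$ must be bounded. This follows from the uniform bounds on $G$ and $G_s$ together with $q\in L^1$.
\item The tail estimate $|\tilde\epsilon_i(t,s)|\le h_i\int_t^\infty Q$ must hold uniformly in $s\le t$.
\end{enumerate}
If any of these fails, the fallback is the weaker but uniform bound $|E_{\text{rem}}(t,s)|\le C\int_s^t Q$ together with a splitting argument. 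For fixed large $T$, the tail $\int_T^t A|y_2|$ is small, while for $s\le T$ one uses the tail expansion. This recovers $\tilde R_{\text{int}}(t)=o(1)$ and hence the conclusion.
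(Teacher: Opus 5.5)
Your proposal is correct and follows the paper's proof essentially verbatim: split $P_4=P_{4,Q}+P_{4,B}$ using Lemma \ref{lem:P4_exact_bound}, apply the asymptotic lemmas for $I(t)$ and $J(t)$, and add the two expansions to obtain $e_1=c_1+d_1$ and $e_2=c_2+d_2$. You also point out that the standing hypotheses $\int_0^\infty A(s)|y_2(s)|\,ds<\infty$ and $p'(s)\to 0$ must be in force, which the paper's statement leaves implicit; your checks on the boundedness of the coefficient functions and on the uniform tail bounds do hold, so the fallback argument is not needed.
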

	
	\begin{proof}
		By definition, $P_4(t,s) = P_{4,Q}(t,s) + P_{4,B}(t,s)$. By linearity of the integral:
		\[ \int_0^t P_4(t,s)y_2(s)\,ds = I(t) + J(t) \]
		Substituting the rigorously bounded asymptotic representations from Lemma 3 and Lemma 4:
		\[ I(t) + J(t) = \frac{c_1 \cos(\omega t) + c_2 \sin(\omega t)}{A(t)} + \frac{d_1 \cos(\omega t) + d_2 \sin(\omega t)}{A(t)} + o\left(\frac{1}{A(t)}\right) \]
		Factoring common terms establishes the theorem, where the final constants are explicitly $e_1 = c_1 + d_1$ and $e_2 = c_2 + d_2$.
	\end{proof}
	
\subsection{Asymptotic Preservation Result}
We can collect the results from the previous two sections to get the asymptotic behaviour of x.
\begin{theorem}
Suppose that $y_2(t)=o(1/A(t))$ as $t\to\infty$ and that $\int_0^\infty A(s)|y_2(s)|\,ds <+\infty$. Then there exist constants $c_1$ and $c_2$ such that 	
			\begin{equation} \label{eq:x_asymp_form}
		x(t) = \frac{c_1 \sin(\omega t) + c_2 \cos(\omega t)}{A(t)} + o\left(\frac{1}{A(t)}\right) \quad \text{as } t \to \infty,
	\end{equation}
\end{theorem}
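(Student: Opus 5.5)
The plan is to start from the decomposition \eqref{eq.decompxWDy2}, namely $x(t)=y_2(t)+W(t)+D(t)$ with $W(t)=\int_0^t E(t,s)L_3(t,s)y_2(s)\,ds$ and $D(t)=\int_0^t P_4(t,s)y_2(s)\,ds$, and to show that each of the three pieces is of the form $\rho(t)\times(\text{constant combination of }\sin\omega t,\cos\omega t)+o(\rho(t))$.

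\textbf{The term $y_2$.} By hypothesis $y_2(t)=o(1/A(t))=o(\rho(t))$, so it contributes nothing at leading order.

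\textbf{The term $W$.} To apply the theorem on the asymptotics of $W$, I first check \eqref{eq:integral_limits}. The integrands $\cos(\omega s)A(s)y_2(s)$ and $\sin(\omega s)A(s)y_2(s)$ are dominated by $A(s)|y_2(s)|$, which is in $L^1(0,\infty)$ by assumption. Hence both limits $I_c$ and $I_s$ exist and are finite, with $|I_c|,|I_s|\le\int_0^\infty A|y_2|$. That theorem then gives
\[
W(t)=\rho(t)\bigl(\tilde c_1\sin\omega t+\tilde c_2\cos\omega t\bigr)+o(\rho(t)),
\]
with $\tilde c_1,\tilde c_2$ given explicitly in terms of $I_c,I_s$.

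\textbf{The term $D$.} I invoke the Final Asymptotic Form theorem for $\int_0^t P_4 y_2$, which gives $D(t)=(e_1\cos\omega t+e_2\sin\omega t)/A(t)+o(1/A(t))$. Before using it, I need to confirm that its ingredients apply. Lemma \ref{lem:P4_exact_bound} splits $P_4=P_{4,Q}+P_{4,B}$. The asymptotics of $I$ and $J$ were proved under $I^*=\int_0^\infty A|y_2|<\infty$, which is exactly our second hypothesis. The key facts to check are:
\begin{itemize}
\item the coefficient functions $a_1,a_2,b_1,b_2,D_c,D_s$ are bounded, which follows from $p\to0$ and $p'\to0$ (Section 3);
\item $Q=|q|$ is integrable.
\end{itemize}

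\textbf{Main obstacle.} The step I expect to be hardest is the claim in the expansion lemma for $G$ that $\epsilon_{i,c}(s)$ and $\epsilon_{i,s}(s)$ are bounded uniformly in $s$, and that the tail remainders satisfy $|\tilde\epsilon_i(t,s)|\le h_i\int_t^\infty Q$ uniformly in $s$. This needs uniform boundedness of $G$ and $G_s$ over $0\le s\le t$. For $G$ it follows from the Gronwall bound \eqref{eq:gronwall} with constant $e^{Q(0)/\omega}/\omega$. For $G_s$ the same Gronwall argument applies to its Volterra equation, again giving a bound independent of $s$.

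With those bounds in hand, the boundary term $\tilde R_{\rm int}(t)$ is at most $C I^*\int_t^\infty Q$, which tends to $0$. Everything else reduces to dominated tails $\int_t^\infty A|y_2|\to0$.

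\textbf{Conclusion.} Adding the three pieces and collecting the coefficients of $\sin\omega t$ and $\cos\omega t$ gives $c_1=\tilde c_1+e_2$ and $c_2=\tilde c_2+e_1$. This yields \eqref{eq:x_asymp_form}, since a finite sum of $o(1/A(t))$ terms is again $o(1/A(t))$.
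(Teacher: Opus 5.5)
Your proposal is correct and follows the paper's proof: you split $x=y_2+W+D$, dispose of $y_2$ by hypothesis, apply the theorem on $W$ and the Final Asymptotic Form theorem for $D$, and collect the coefficients of $\sin\omega t$ and $\cos\omega t$. Your check that \eqref{eq:integral_limits} follows from $A|y_2|\in L^1$ by domination fills in a step the paper leaves tacit. One attribution needs fixing: $p'\to0$ is a standing assumption of that subsection, not a consequence of Section 3. Alternatively, you can avoid it, since boundedness of $A|y_2|$ (from $y_2=o(1/A)$) together with $q,p'\in L^1$ already controls the possibly unbounded parts of $a_1,a_2,b_2,D_c,D_s$.
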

\begin{proof}
We have from that \eqref{eq.decompxWDy2} that	\[
	x(t) = y_2(t)+\int_0^t E(t, s) L_3(t, s) y_2(s)\,ds + \int_0^t P_4(t,s)y_2(s)\,ds.
\]
We have shown in the last Theorem that
		\[ \int_0^t P_4(t,s)y_2(s)\,ds = \frac{e_1 \cos(\omega t) + e_2 \sin(\omega t)}{A(t)} + o\left(\frac{1}{A(t)}\right), \]
		and in Theorem we showed that  
		\[ 	\int_0^t E(t, s) L_3(t, s) y_2(s)\,ds=
		\frac{c_1 \cos(\omega t) + c_2 \sin(\omega t)}{A(t)} + o\left(\frac{1}{A(t)}\right).
		\]
		By hypothesis $y_2(t)=o(1/A(t))$ as $t\to\infty$, so combining this with the above asymptotic estimates, we have the desired result. 
\end{proof}
	
	\section*{Converse Convergence Result}
	
	We are given the Volterra integral equation:
	\[ y_2(t) = x(t) + \int_0^t K_3(t, s) x(s) \, ds \]
	where the kernel is defined by $K_3(t, s) = h''(t-s) + \omega^2 h(t-s) + p(s)h'(t-s) - p'(s)h(t-s)$, with $h(t) = t e^{-\omega^2 t}$. We are given the asymptotic relations $y_2(t) = o\left(\frac{1}{A(t)}\right)$ and $x(t) = \frac{c_1 \cos(\omega t) + c_2 \sin(\omega t)}{A(t)} + o\left(\frac{1}{A(t)}\right)$ as $t \to \infty$. We wish to prove that the integral $I_S(T) = \int_0^T A(t)\sin(\omega t) y_2(t)\,dt$ tends to a finite limit as $T \to \infty$ (and similarly for the cosine integral).
	
	\vspace{0.5cm}
	
	\begin{theorem}
		Under the given assumptions, the limits $$\lim_{T \to \infty} \int_0^T A(t)\sin(\omega t) y_2(t)\,dt$$ and $$\lim_{T \to \infty} \int_0^T A(t)\cos(\omega t) y_2(t)\,dt$$ exist and are finite.
	\end{theorem}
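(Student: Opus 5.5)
Rather than working with the Volterra equation for $y_2$ in terms of $x$ directly, I would use the identity \eqref{eq.decompxWDy2},
\[
x(t)=y_2(t)+W(t)+D(t), \qquad W(t)=\frac{C_1(t)}{A(t)}\int_0^t\cos(\omega s)A(s)y_2(s)\,ds+\frac{C_2(t)}{A(t)}\int_0^t\sin(\omega s)A(s)y_2(s)\,ds,
\]
and solve for the pair of integrals. Write $\mathcal C(t)=\int_0^t\cos(\omega s)A(s)y_2(s)\,ds$ and $\mathcal S(t)=\int_0^t\sin(\omega s)A(s)y_2(s)\,ds$. Multiplying the identity by $A(t)$ and using the hypotheses $A x=c_1\cos(\omega t)+c_2\sin(\omega t)+o(1)$ and $A y_2=o(1)$ gives
\[
C_1(t)\mathcal C(t)+C_2(t)\mathcal S(t)=c_1\cos(\omega t)+c_2\sin(\omega t)-A(t)D(t)+o(1).
\]
The plan has two parts: show that $A(t)D(t)$ is asymptotically a fixed sinusoid plus $o(1)$, and then recover $\mathcal C,\mathcal S$ by the quarter-period shift used in Theorem \ref{thm:unified_x_only}.

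\textbf{Step 1 (the remainder $D$).} The last theorem of the previous subsection assumed $\int_0^\infty A|y_2|<\infty$, which is not available here. I would instead redo the analysis of $P_4$ using only $y_2=o(1/A)$, i.e.\ $A(s)|y_2(s)|\to0$. By Lemma \ref{lem:P4_exact_bound} and the representation lemmas, $A(t)D(t)$ is a sum of integrals $\int_0^t A(s)y_2(s)\,m(t,s)\,ds$. The coefficients $a_1,a_2,D_c,D_s$ contain the non-integrable factor $p(s)$, and this is the main obstacle.

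The $p$-terms come with the structure $A'(s)=\tfrac12 p(s)A(s)$. I would treat them by viewing $y_2$ in terms of $y_1$ and integrating by parts, or more simply by absorbing them into the unknowns. Concretely, the $p(s)$-parts of $A(t)D(t)$ equal $\cos(\omega t)$ and $\sin(\omega t)$ times integrals of the form $\int_0^t p(s)\,[\alpha\cos(\omega s)+\beta\sin(\omega s)]A(s)y_2(s)\,ds$. These are $\int_0^t p\,\mathcal C'$ and $\int_0^t p\,\mathcal S'$, which after integration by parts become $p(t)\mathcal C(t)-\int_0^t p'\,\mathcal C$ (and likewise for $\mathcal S$). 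Since $p'\in L^1$, these are controlled once $\mathcal C,\mathcal S$ are bounded.

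The $q$-weighted and $\epsilon_i$-tail terms are handled with $Q\in L^1$ and the boundedness of $A y_2$. For example, the estimate for $\tilde R_{\mathrm{int}}$ is replaced by $\int_0^t |A y_2|\,|\tilde E_{\mathrm{rem}}|\,ds$, and I would use the sharper kernel bound $\int_s^t Q$ rather than the tail bound, splitting the integral at a large fixed $T$.

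\textbf{Step 2 (boundedness, then convergence).} First I would establish boundedness of $X=(\mathcal C,\mathcal S)$ by the localized contraction argument of Step 2 of Theorem \ref{thm:unified_x_only}. Here $X'=A y_2(\cos\omega t,\sin\omega t)=o(1)$, and the $p$-dependent feedback from Step 1 has small coefficient $p(t)\to0$ plus an $L^1$ weight $p'$; Gronwall's inequality then yields $\sup|X|<\infty$.

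With $X$ bounded, every term of $A(t)D(t)$ becomes $\cos(\omega t)\,\gamma_c(t)+\sin(\omega t)\,\gamma_s(t)+o(1)$, where $\gamma_c,\gamma_s$ are convergent integrals (they have integrable weights $q$, $p'$, or $p\cdot o(1)$ after the integration by parts). Moving these to the left produces a system
\[
\begin{pmatrix}\cos(\omega t)\\ \sin(\omega t)\end{pmatrix}\!\cdot\!\big(M X(t)-v(t)\big)=o(1),
\]
with $M$ the constant invertible matrix from the $C_1,C_2$ expansion (determinant $\omega^2(\omega^2+1)^2$) and $v(t)$ convergent.

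Since $X'\to0$, evaluating this relation at $t$ and at $t+\pi/(2\omega)$ as in Step 3 of Theorem \ref{thm:unified_x_only} gives $MX(t)-v(t)\to0$. Hence $X(t)\to M^{-1}\lim v$, which is finite, so both limits exist.
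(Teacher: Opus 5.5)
Your route is genuinely different from the paper's. The paper multiplies $y_2=x+\int_0^tK_3(t,s)x(s)\,ds$ by $A(t)\sin(\omega t)$, applies Fubini and integrates by parts twice in $t$. Your closing inversion, via $X'=o(1)$ and the quarter-period shift, is sound.

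\textbf{The gap: the $\epsilon_i$-terms in Step 1.} These are the contributions $E_{\mathrm{rem}}=b_1\epsilon_2+b_2\epsilon_1$ of $P_{4,Q}$. Their coefficients do not decay: $b_1\to-2\omega^2$, and $b_2$ is $\omega^4-\omega^2$ plus terms in $p,p^2,q,p'$. Moreover, $\epsilon_i(t,s)$ is genuinely of size $\int_s^t|q|$. For instance, $\epsilon_2(t,s)=\frac{1}{2\omega}\sin(\omega(t-s))\int_s^t q(\tau)\,d\tau$ plus oscillatory and higher-order terms. With only $Ay_2=o(1)$, no absolute-value estimate controls $\int_0^tA(s)y_2(s)E_{\mathrm{rem}}(t,s)\,ds$:
\begin{itemize}
\item Splitting at $T$ leaves the bound $\sup_{t\ge s\ge T}|E_{\mathrm{rem}}(t,s)|\int_T^tA|y_2|\,ds$. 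This is unbounded in $t$ unless $A|y_2|\in L^1$, which is exactly the hypothesis you noted is unavailable.
\item The sharper kernel bound gives $\int_0^t|q(\tau)|\int_0^\tau A|y_2|\,ds\,d\tau$. Already in the Bessel case $p(t)=1/(1+t)$, $q(t)=\frac{1}{4(1+t)^2}$, this diverges like $\log\log t$ when $A|y_2|\asymp 1/\log(2+t)$.
\item For the same reason the weights $E_c,E_s=O(\int_s^\infty|q|)$, of order $1/s$ in that case, are not integrable.
\item Weights of type ``$p\cdot o(1)$'' need not be integrable either: $p(s)\int_s^\infty|q|\notin L^1$ for $p(t)=(1+t)^{-a}$ with $\frac12<a\le\frac23$.
\end{itemize}
So neither the convergence of $\gamma_c,\gamma_s$ nor your Gronwall bound for $X$ is justified. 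The feedback from these terms is not of the form $p(t)|X(t)|+\int|p'||X|$.

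\textbf{How to fix it.} You need to move $q$ off the pointwise factor $Ay_2$ and onto running integrals, just as you already do for the $p$-terms. In its simplest form this is $\int_0^t\mathcal C'(s)\int_s^tq\,d\tau\,ds=\int_0^tq\,\mathcal C\,d\tau$. Concretely, Fubini gives
\[ \int_0^tA(s)y_2(s)\epsilon_1(t,s)\,ds=-\frac{1}{\omega}\int_0^t\sin(\omega(t-\tau))q(\tau)\Psi(\tau)\,d\tau,\qquad \Psi(\tau)=\int_0^\tau G(\tau,s)A(s)y_2(s)\,ds. \]
Since $\Psi''+(\omega^2+q)\Psi=Ay_2$ with zero data,
\[ \Psi(\tau)=\omega^{-1}\bigl(\sin(\omega\tau)\mathcal C(\tau)-\cos(\omega\tau)\mathcal S(\tau)\bigr)-\omega^{-1}\int_0^\tau\sin(\omega(\tau-r))q(r)\Psi(r)\,dr, \]
so $|\Psi(\tau)|\le C\sup_{[0,\tau]}|X|$ by Gronwall.

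The term then splits into $\cos(\omega t)$ and $\sin(\omega t)$ times integrals with $L^1$ weight $q$. These converge once $X$ is bounded. In your boundedness inequality they contribute only $C\int_0^t|q(\tau)|\sup_{[0,\tau]}|X|\,d\tau$, which Gronwall handles.

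The same device treats the remaining pieces:
\begin{itemize}
\item For $\epsilon_2$, use $\int_0^\tau G_s(\tau,s)A(s)y_2(s)\,ds$. It satisfies the analogous Volterra equation with leading term $-\cos(\omega\tau)\mathcal C-\sin(\omega\tau)\mathcal S$.
\item For the $p\,\epsilon_i$ terms, take forcing $pAy_2$ and integrate by parts against $d\mathcal C,d\mathcal S$.
\end{itemize}
With this supplied, the rest of your argument goes through.
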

	
	\begin{proof}
		We detail the proof for the sine integral; the cosine case follows identically. Multiply the integral equation for $y_2(t)$ by $A(t)\sin(\omega t)$ and integrate from $0$ to $T$:
		\[ I_S(T) = \int_0^T A(t)\sin(\omega t) x(t)\,dt + \int_0^T A(t)\sin(\omega t) \left( \int_0^t K_3(t, s) x(s) \, ds \right) dt \]
		Since all functions are continuous on the compact domain $[0, T]$, we apply Fubini's theorem to swap the order of integration in the second term:
		\[ I_S(T) = \int_0^T A(s) x(s) \sin(\omega s)\,ds + \int_0^T x(s) \left( \int_s^T A(t)\sin(\omega t) K_3(t,s)\,dt \right) ds \]
		Let $J(s,T) = \int_s^T A(t)\sin(\omega t) K_3(t,s)\,dt$. We substitute the definition of $K_3(t,s)$ and separate it into two parts: $H(t-s) = h''(t-s) + \omega^2 h(t-s)$ and the terms involving $p(s), p'(s)$. 
		\[ J(s,T) = \int_s^T A(t)\sin(\omega t) H(t-s)\,dt + \int_s^T A(t)\sin(\omega t) [p(s)h'(t-s) - p'(s)h(t-s)]\,dt \]
		We integrate the first term by parts twice with respect to $t$. Using $h(0) = 0$ and $h'(0) = 1$, the first integration yields:
		\begin{align*}
			\int_s^T A(t)\sin(\omega t) h''(t-s)\,dt &= \left[ A(t)\sin(\omega t)h'(t-s) \right]_s^T - \int_s^T \frac{d}{dt}(A(t)\sin(\omega t)) h'(t-s)\,dt \\
			&= A(T)\sin(\omega T)h'(T-s) - A(s)\sin(\omega s) - \int_s^T \Phi'(t) h'(t-s)\,dt
		\end{align*}
		where we define $\Phi(t) = A(t)\sin(\omega t)$. Integrating the remaining integral by parts once more gives:
		\[ - \int_s^T \Phi'(t) h'(t-s)\,dt = -\Phi'(T)h(T-s) + \Phi'(s)h(0) + \int_s^T \Phi''(t)h(t-s)\,dt \]
		Thus, the integral over $H(t-s)$ becomes:
		\[ -A(s)\sin(\omega s) + A(T)\sin(\omega T)h'(T-s) - \Phi'(T)h(T-s) + \int_s^T [\Phi''(t) + \omega^2 \Phi(t)] h(t-s)\,dt \]
		Substituting $J(s,T)$ back into $I_S(T)$, we observe a crucial exact cancellation. The leading boundary term $-A(s)\sin(\omega s)$ exactly cancels the $\int_0^T A(s)x(s)\sin(\omega s)\,ds$ term outside the double integral. This leaves:
		\[ I_S(T) = O(T) + \int_0^T x(s) L(s,T)\,ds \]
		where $O(T)$ collects the boundary evaluations at $T$, and $L(s,T)$ collects the remaining integrals over $t$:
		\[ O(T) = \int_0^T x(s) \left[ A(T)\sin(\omega T)h'(T-s) - \Phi'(T)h(T-s) \right] ds \]
		\[ L(s,T) = \int_s^T [\Phi''(t) + \omega^2 \Phi(t)] h(t-s)\,dt + \int_s^T A(t)\sin(\omega t) [p(s)h'(t-s) - p'(s)h(t-s)]\,dt \]
		
		\textbf{Step 1: Asymptotic Behavior of the Oscillating Boundary Term $O(T)$} \\
		We must show that $O(T)$ tends to a finite constant as $T \to \infty$. Apply the change of variables $u = T-s$ to $O(T)$. We use the relations $x(T-u)A(T-u) = X(T-u)$ and $\lim_{T \to \infty} \frac{A(T)}{A(T-u)} = 1$. Since $\Phi'(T) = \omega A(T)\cos(\omega T) + o(A(T))$, applying the Dominated Convergence Theorem as $T \to \infty$ yields:
		\[ O(T) = \int_0^\infty X(T-u) [ \sin(\omega T)h'(u) - \omega \cos(\omega T)h(u) ]\,du + o(1) \]
		Substitute $X(T-u) = c_1 \cos(\omega(T-u)) + c_2 \sin(\omega(T-u))$. By expanding using trigonometric addition formulas, we can group $X(T-u)$ into components of $T$ and $u$:
		\[ X(T-u) = \cos(\omega T) C(u) + \sin(\omega T) S(u) \]
		where $C(u) = c_1\cos(\omega u) - c_2\sin(\omega u)$ and $S(u) = c_1\sin(\omega u) + c_2\cos(\omega u)$. Note the strict derivative relationships: $C'(u) = -\omega S(u)$ and $S'(u) = \omega C(u)$. Expanding the integrand of $O(T)$ produces three trigonometric coefficients relative to $T$:
		\begin{align*}
			O(T) &\approx \sin(\omega T)\cos(\omega T) \int_0^\infty [ C(u)h'(u) - \omega S(u)h(u) ]\,du \\
			&\quad + \sin^2(\omega T) \int_0^\infty S(u)h'(u)\,du - \cos^2(\omega T) \int_0^\infty \omega C(u)h(u)\,du
		\end{align*}
		For the cross term, observe that $C(u)h'(u) - \omega S(u)h(u) = C(u)h'(u) + C'(u)h(u) = \frac{d}{du}[C(u)h(u)]$. Its integral is $[C(u)h(u)]_0^\infty = 0$ because $h(0) = h(\infty) = 0$.
		For the squared terms, integrating $S(u)h'(u)$ by parts yields:
		\[ \int_0^\infty S(u)h'(u)\,du = [S(u)h(u)]_0^\infty - \int_0^\infty S'(u)h(u)\,du = -\int_0^\infty \omega C(u)h(u)\,du \]
		Let this shared integral value be denoted by $K$. Then the squared terms elegantly collapse:
		\[ \sin^2(\omega T) (K) + \cos^2(\omega T) (K) = K \]
		Thus, the oscillations perfectly annihilate one another, yielding $\lim_{T \to \infty} O(T) = K < \infty$.
		
		\textbf{Step 2: Convergence of the Integral Remainder} \\
		We must now guarantee that $\lim_{T \to \infty} \int_0^T x(s) L(s,T)\,ds$ converges. Expanding $$\Phi''(t) + \omega^2 \Phi(t) = A''(t)\sin(\omega t) + 2\omega A'(t)\cos(\omega t),$$ we replace $A'(t) = \frac{1}{2}p(t)A(t)$ and $A''(t) = \frac{1}{2}p'(t)A(t) + \frac{1}{4}p^2(t)A(t)$.
		The terms composing $L(s,T)$ contain functions of $p, p'$ and $p^2$. We first establish that $p' \in L^1(0, \infty)$. We are given $q(s) = -\frac{p'(s)}{2} - \frac{p^2(s)}{4} \in L^1(0, \infty)$. Rearranging and integrating gives $\int_0^t \frac{p^2(s)}{4}\,ds = -\int_0^t q(s)\,ds - \frac{p(t)}{2} + \frac{p(0)}{2}$. Since $p(t) \to 0$ and $q \in L^1$, the limit as $t \to \infty$ exists, proving $p^2 \in L^1(0, \infty)$. Consequently, $p'(s) = -2q(s) - \frac{p^2(s)}{2}$ is also strictly in $L^1(0, \infty)$.
		
		Because $p', p^2 \in L^1(0,\infty)$, any terms in $L(s,T)$ bounded by $A(t)(|p'(t)| + p^2(t))$ are absolutely integrable against $x(s) \sim X(s)/A(s)$. The singular threat to convergence is the term containing $p(s)$, as $p \notin L^1(0, \infty)$. 
		For any integral of the form $\int_0^\infty x(s) p(s) M(s)\,ds$ where $M(s)$ is bounded, we use the identity:
		\[ x(s)p(s) = \frac{X(s)}{A(s)}p(s) = -2 X(s) \frac{d}{ds}\left(\frac{1}{A(s)}\right) \]
		We integrate by parts:
		\[ \int_0^T -2 X(s) M(s) \frac{d}{ds}(A^{-1}(s))\,ds = \left[ -2 X(s) M(s) A^{-1}(s) \right]_0^T + \int_0^T \frac{1}{A(s)} \frac{d}{ds}(2 X(s) M(s))\,ds \]
		Since $A(T) \to \infty$, the boundary term tends to 0. Because $X(s)$ is a bounded trigonometric sum and $M(s)$ (composed of convolutions with $h$) has a bounded derivative, the integrand of the remaining term is $O(1/A(s))$. Because $A(s)$ grows monotonically, this integral converges by Dirichlet's test.
		
		Therefore, every component of the integral remainder converges as $T \to \infty$. Combined with $\lim_{T\to\infty} O(T) = K$, we conclude that $I_S(T)$ tends to a finite limit. 
	\end{proof}
	
				\bibliographystyle{unsrt}

\end{document}